


\documentclass{aastex6}

\usepackage{amssymb}
\usepackage{graphicx}\usepackage{subfigure}
\usepackage{amsmath}
\usepackage[title]{appendix}
\usepackage{mathrsfs}
\usepackage{amsbsy}
\usepackage{amsfonts}
\usepackage{mathrsfs}
\usepackage{latexsym}
\usepackage{psfrag}
\usepackage{multirow}

\usepackage{paralist}

\AuthorCallLimit=1
\fullcollaborationName{The Friends of AASTeX Collaboration}

\def\pt{\partial}\def\dfr#1#2{\displaystyle{\frac{#1}{#2}}}
\renewcommand{\vec}[1]{\mbox{\boldmath \small $#1$}}
\newcommand{\svec}[1]{\mbox{\boldmath \scriptsize $#1$}}
\newcommand{\ssvec}[1]{\mbox{\boldmath \tiny $#1$}}





\newtheorem{Def}{Definition}[section]
\newtheorem{example}{Example}[section]
\newtheorem{lemma}{Lemma}[section]
\newtheorem{thm}{Theorem}[section]
\newtheorem{remark}{Remark}[section]
\numberwithin{equation}{section}
\numberwithin{figure}{section}
\numberwithin{table}{section}

\begin{document}


\title{Physical-constraints-preserving central discontinuous Galerkin methods for
special relativistic hydrodynamics with a general equation of state}


\author{Kailiang Wu\altaffilmark{1} and Huazhong Tang\altaffilmark{2}}
%


\affil{HEDPS, CAPT \& LMAM, School of Mathematical Sciences\\
Peking University, Beijing 100871, P.R. China}


\altaffiltext{1}{wukl@pku.edu.cn}
\altaffiltext{2}{hztang@math.pku.edu.cn (Corresponding author)}

\begin{abstract}

The ideal gas equation of state (EOS) with a constant adiabatic index
is a poor approximation for most relativistic astrophysical flows,
although it is commonly used in relativistic hydrodynamics.
The paper develops high-order accurate physical-constraints-preserving (PCP)
central discontinuous Galerkin (DG) methods for the one- and two-dimensional  special relativistic
hydrodynamic (RHD) equations with a general EOS.
It is built on the theoretical  analysis of the admissible states for the RHD
and the  PCP limiting procedure enforcing the admissibility
of  central DG solutions.
The convexity, scaling and orthogonal invariance, and Lax-Friedrichs splitting property of
the admissible state set are first proved with the aid of its equivalent form,
and then  the high-order central DG methods with the PCP limiting procedure and strong stability preserving time discretization are proved to preserve the positivity of the density, pressure, and specific internal
energy, and the bound of the fluid velocity,
 maintain the high-order accuracy, and be $L^1$-stable.
The accuracy, robustness, and effectiveness of the proposed methods
are demonstrated by several 1D and 2D numerical examples
involving large Lorentz factor, strong discontinuities, or low density or pressure etc.

\end{abstract}

\keywords{Central discontinuous Galerkin,
physical-constraints-preserving,
relativistic hydrodynamics,
admissible state,
equation of state,
high-order accuracy}



\section{Introduction}
\label{sec:intro}

In many cases, high-energy physics and astrophysics may involve fluid flows
where  the velocities are close to the speed of light or the influence of large gravitational potentials cannot be ignored such that the relativistic effect should be taken into account.
Relativistic hydrodynamics (RHD) is important in investigating numerous astrophysical phenomena, from stellar to galactic scales, e.g. gamma-ray bursts, astrophysical jets, core collapse super-novae, coalescing
neutron stars, formation of black holes, etc.

The RHD equations  are highly nonlinear so that their analytical treatment is extremely difficult.
The numerical simulation has become a primary and powerful approach to understand the physical mechanisms in the RHDs. The pioneering numerical work may date back to
the May and White finite difference code via artificial viscosity  for the spherically symmetric general RHD equations in the Lagrangian coordinate \citep{May1966,May1967}.
Wilson first attempted to solve multi-dimensional RHD equations in the Eulerian coordinate by using the finite difference method with the artificial viscosity technique \citep{Wilson:1972}.
Since 1990s, the numerical study of the RHDs began to attract considerable attention, and various modern shock-capturing methods with an exact
or approximate Riemann solver have been developed for the RHD equations,
the readers are referred to the early review articles \citep{MME:2003,Font2008}
and more recent works \citep{WuTang2014,WuTang2015,WuTang2016GRP} as well as references therein.

Most existing methods do not preserve the positivity of the density and pressure as well as the specific internal
energy and the bound of the fluid velocity at the same time,
although they have been used to solve some RHD problems successfully.
There exists the big risk of failure when a numerical scheme
is applied to the RHD problems with large Lorentz
factor, low density or pressure, or strong discontinuity,
because as soon as the negative
density or pressure, or the superluminal
fluid velocity may be obtained, the eigenvalues
of the Jacobian matrix become imaginary so that the discrete problem becomes ill-posed.
It is of great significance to develop high-order accurate numerical schemes, whose solutions
 satisfy the intrinsic physical constraints.
Recent years have witnessed some advances in
developing high order accurate  bound-preserving type schemes for hyperbolic conservation laws.
Those schemes are mainly built on two types of  limiting procedures.
One is the simple scaling limiting procedures
for the reconstructed or evolved solution polynomials in
a finite volume or discontinuous Galerkin method, see e.g. \citep{zhang2010,zhang2012a,zhang2010b,zhang2011,Xing2010}.
Another is the flux-corrected limiting procedure,
which can be used on high-order finite difference, finite volume,
and discontinuous Galerkin methods, see e.g. \citep{Xu_MC2013,Hu2013,Liang2014,JiangXu2013,XiongQiuXu2014,Christlieb}.
A survey of the maximum-principle-satisfying or positivity-preserving
 high-order schemes based on the first type limiter was presented in \citep{zhang2011b}.
 The readers are also referred to \citep{xuzhang2016} for a review of
 those two approaches for enforcing the bound-preserving property in high order schemes.
Two works were recently made to develop the physical-constraints-preserving (PCP) schemes for the special RHD equations with an ideal equation of state (EOS) by extending the above bound-preserving techniques. One is the high-order accurate PCP finite difference weighted essentially non-oscillatory (WENO) schemes  proposed in \citep{WuTang2015}, another is the bound preserving discontinuous Galerkin methods presented in \citep{QinShu2016}. Recently, the extension of PCP schemes to the ideal relativistic magnetohydrodynamics was first studied in \citep{WuTang2016}.


Although the ideal gas EOS with a constant adiabatic index
is commonly used in relativistic hydrodynamics,
it is a poor approximation for most relativistic astrophysical flows, see e.g. \citep{Choi-Wiita2010,Falle-Kom,Ryu,Synge1957}.
The aim of the paper is to study the properties of the admissible states
and develop the high-order accurate PCP central DG methods for the special RHD equations
with a general EOS.
The central DG method was first introduced in \citep{Liu2007,Liu2008}
for the hyperbolic problems and well developed for
the Hamilton-Jacobi equations \citep{LiYa2010}, ideal  magnetohydrodynamic equations \citep{LiXu2011,Yakovlev2013,LiXu2012}, and  relativistic hydrodynamics and magnetohydrodynamics \citep{Zhao}, etc.
Recently positivity-preserving techniques for central DG method were discussed in \citep{ChengLiQiuXu} without rigorous proof for the ideal magnetohydrodynamics.
In comparison with the existing works in the non-relativistic or relativistic case,
it is not trivial to develop
 high-order accurate provable PCP central DG method for the RHDs with general EOS.
The technical challenge is mainly coming from  the inherent nonlinear coupling between the RHD equations due to the Lorentz factor and  general EOS,
no explicit expressions of the primitive variables and
flux vectors with respect to the conservative vector,
and  one more physical constraint for the fluid velocity in addition to the positivity of the density, pressure, and specific internal energy.

The paper is organized as follows.
Section \ref{sec:governingequ} introduces the governing equations and  the general equation of state.
Section \ref{sec:admis-state}  analyzes the admissible state set.
Section \ref{sec:scheme} presents the
high-order accurate PCP central DG methods for the 1D and 2D RHD equations with a
general EOS. 
Section \ref{sec:experiments}
gives several  numerical examples to verify the accuracy,
robustness, and effectiveness of the proposed methods for ultra-relativistic problems with large Lorentz factor, strong discontinuities, or low density or pressure, etc.
Concluding remarks are presented in Section \ref{sec:conclude}.

\section{Governing equations}
\label{sec:governingequ}
In the framework of special relativity,
the ideal fluid flow is governed by the laws of particle number conservation
and energy-momentum conservation \citep{Landau}.
In the laboratory frame of reference, the $d$-dimensional special RHD equations
may be written into a system of conservation laws as follows
\begin{equation}\label{eqn:coneqn3d}
\displaystyle\frac{\partial \vec{U}}{\partial t} +
\sum^d_{i=1}\frac{\partial \vec{F}_i(\vec{U})}{\partial x_i}= \vec 0,
\end{equation}
where $d=1$, or 2, or 3, $\vec{U}=\left(D, \vec m, E\right)^{\rm T}$ denotes the conservative vector, and $\vec{F}_i$ is the flux
in the $x_i$-direction, which is defined by
\begin{align}
\vec{F}_i =& \left(Dv_i, v_i \vec m + p \vec e_i, m_i\right)^{\rm T}, ~i=1,\cdots, d. \label{eq:fluxdef}
\end{align}
Here the mass density $D=\rho W$,
the momentum density (row) vector  $\vec m=(m_1, \cdots, m_d)=Dh W \vec v$,
the energy density $E=DhW-p$,
and   $\rho$, $\vec v= (v_1,  \cdots,  v_d)$ , and $p$ denote
the rest-mass density, fluid velocity vector, and pressure,  respectively.
Moreover,  the row vector $\vec e_i$ denote the $i$-th row of the identity matrix of order $d$,
 $W=1/\sqrt{1-v^2}$ is the Lorentz factor
with $v:=(v_1^2+\cdots +v_d^2)^{1/2}$, and $h$ denotes the specific enthalpy defined by
\begin{equation}
\label{eq:h}
h = 1 + e + \dfr{p}{\rho},
\end{equation}
with units in which the speed of light $c$ is equal to one,
and $e$ is the specific internal energy.

An additional thermodynamic equation relating state variables,
 the so-called equation of
state (EOS),
is needed to close the system \eqref{eqn:coneqn3d}. A general EOS may be expressed as
\begin{equation}
\label{eq:EOS:e}
e = e(p,\rho),
\end{equation}
or 
\begin{equation}
\label{eq:EOS:h}
h =h(p,\rho)= 1+e(p,\rho) + p/\rho.
\end{equation}
The relativistic kinetic theory reveals \citep{Taub}  that the EOS \eqref{eq:EOS:h} should satisfy
$$
\left( h - \frac{p}{\rho} \right) \left( h - \frac{4p}{\rho} \right) \ge 1,
$$
which implies a weaker inequality
\begin{equation}\label{eq:hcondition1}
h(p,\rho) \ge \sqrt{1+p^2/\rho^2}+p/\rho.
\end{equation}
It can also be derived from the kinetic theory, see Appendix \ref{Appendix-A},
and will be  useful in analyzing the admissible state of RHD equations
\eqref{eqn:coneqn3d}.

For a general EOS, the local sound speed $c_s$ is defined by
\begin{equation}\label{eq:cs2def}
c_s^2 = \frac{1}{h}\frac{\pt p(\rho,S)}{\pt \rho} = \frac{1 }{  \frac{\pt \rho(p,S)}{\pt p} h},
\end{equation}
where the entropy $S$ is related to other thermodynamic variables  \citep{Landau} by
\begin{equation}\label{eq:diffrelation}
T {\rm d} S = {\rm d} e + p {\rm d} \Big( \frac{1}{\rho} \Big)
= {\rm d}h -  \frac{1}{\rho}  {\rm d} p,
\end{equation}
here $T$ denotes the thermodynamical temperature.

We will consider the causal EOS, i.e. those for which
\begin{equation}\label{eq:cscondition}
0<c_s < c = 1.
\end{equation}
 For such EOS, the hyperbolic property of   \eqref{eqn:coneqn3d}
 is preserved. The readers are referred to \citep{ZhaoTang2013}
 for the calculation of eigenvalues and (left and right) eigenvectors for the system \eqref{eqn:coneqn3d}
 with $d=1$ and 2 and  a general EOS.



\begin{lemma}\label{lem:gEOSC}
If the fluid's coefficient of thermal expansion $\beta=- \frac{\pt {\ln \rho(T,p)}}{\pt T} >0$, then the following inequality holds
\begin{equation}\label{eq:gEOSC}
h\left(\frac1{\rho} - \frac{\pt h(p,\rho)}{\pt p} \right) < \frac{\pt h(p,\rho)}{\pt \rho} < 0.
\end{equation}
\end{lemma}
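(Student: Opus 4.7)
The plan is to convert both sides of \eqref{eq:gEOSC} into combinations of standard thermodynamic partial derivatives (via the differential $T\,dS = dh - \rho^{-1}dp$ from \eqref{eq:diffrelation}), pin down the sign of $(\partial h/\partial\rho)_p$ using the hypothesis $\beta>0$, and finally reduce the remaining inequality to the causality bound $c_s<1$ from \eqref{eq:cscondition} through an identity along isentropes. The assumption $\beta>0$ is what fixes signs; the only extra ingredient is the standard thermodynamic stability condition $c_p>0$, which I will note is implicit for a causal EOS.

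\emph{Step 1 (the right inequality).} Treating $h$ momentarily as a function of $(p,S)$, the differential \eqref{eq:diffrelation} yields $(\partial h/\partial p)_S=1/\rho$ and $(\partial h/\partial S)_p=T$. Changing variables to $(p,\rho)$ via $S=S(p,\rho)$ and applying the chain rule gives
\begin{equation*}
\left(\frac{\partial h}{\partial\rho}\right)_{\!p}=T\left(\frac{\partial S}{\partial\rho}\right)_{\!p},\qquad
\left(\frac{\partial h}{\partial p}\right)_{\!\rho}=\frac{1}{\rho}+T\left(\frac{\partial S}{\partial p}\right)_{\!\rho}.
\end{equation*}
Routing $(\partial S/\partial\rho)_p$ through $T$ as intermediate variable produces $(\partial S/\partial\rho)_p=(c_p/T)\cdot 1/(\partial\rho/\partial T)_p=-c_p/(T\rho\beta)$, where I used $(\partial\rho/\partial T)_p=-\rho\beta$ from the definition of $\beta$ and $c_p=T(\partial S/\partial T)_p>0$. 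Thus $(\partial h/\partial\rho)_p=-c_p/(\rho\beta)<0$, which is the right-hand inequality in \eqref{eq:gEOSC}.

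\emph{Step 2 and Step 3 (the left inequality).} Along an isentrope, $dh=\rho^{-1}dp$ by \eqref{eq:diffrelation}, and viewing $h$ as $h(p,\rho)$ along that same isentrope gives $(\partial h/\partial p)_\rho\,dp+(\partial h/\partial\rho)_p\,d\rho=\rho^{-1}dp$. Using $(\partial p/\partial\rho)_S=h c_s^2$ from \eqref{eq:cs2def} and dividing by $d\rho$, this rearranges to the key identity
\begin{equation*}
h\left(\frac{1}{\rho}-\left(\frac{\partial h}{\partial p}\right)_{\!\rho}\right)=\frac{1}{c_s^2}\left(\frac{\partial h}{\partial\rho}\right)_{\!p}.
\end{equation*}
Combined with Step~1 and the causality bound $0<c_s^2<1$ from \eqref{eq:cscondition}, the left inequality in \eqref{eq:gEOSC} reduces to $(c_s^{-2}-1)(\partial h/\partial\rho)_p<0$, where the first factor is strictly positive and the second is strictly negative; this closes the argument.

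The main obstacle is the careful bookkeeping across the three coordinate systems $(T,p)$, $(S,p)$, and $(\rho,p)$ — in particular invoking the Maxwell-type relation and the chain rule to land on $-c_p/(T\rho\beta)$ with the correct sign. A second, subtler point is to be explicit that the thermodynamic stability $c_p>0$ is implicitly in force, so that the hypothesis $\beta>0$ alone suffices to produce both inequalities in \eqref{eq:gEOSC}.
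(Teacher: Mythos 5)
Your proof is correct and takes essentially the same route as the paper's: you first establish $\left(\partial h/\partial\rho\right)_p=-c_p/(\rho\beta)<0$ (the paper gets the identical expression by differentiating $e(p,\rho(T,p))=e(T,\rho(T,p))$ in $T$ rather than through the entropy chain rule, and it likewise simply asserts the positivity of the specific heat at constant pressure, which you flag explicitly), and then use the same key identity $h\left(\rho^{-1}-\partial h/\partial p\right)=c_s^{-2}\,\partial h/\partial\rho$ together with the causality bound $c_s<1$ to get the left inequality. No gaps; the argument matches the paper's in substance.
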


\begin{proof}
Taking partial derivatives of $e(p,\rho(T,p))=e(T,\rho(T,p))$
with respect to $T$  gives
$$
\frac{\pt e(p,\rho) }{\pt \rho} \frac{\pt \rho(T,p) }{\pt T} = \frac{\pt e(T,\rho) }{\pt T} + \frac{\pt e(T,\rho) }{\pt \rho} \frac{\pt \rho(T,p) }{\pt T},
$$
being equivalent to
\begin{equation}\label{eq:generalEOSproof}
\frac{\pt e(p,\rho) }{\pt \rho} \frac{\pt \rho(T,p) }{\pt T} = C_p + \frac{p}{\rho^2} \frac{\pt \rho(T,p)}{\pt T},
\end{equation}
where
$$
C_p := \frac{\pt e(T,\rho) }{\pt T} + \left(  \frac{\pt e(T,\rho) }{\pt \rho} - \frac{p}{\rho^2}  \right) \frac{\pt \rho(T,p) }{\pt T} \in \mathbb{R}^+,
$$
denotes the specific heat capacity at constant pressure.
Using the definition of $\beta$ and \eqref{eq:generalEOSproof} gives
\begin{equation*} 
\frac{\pt e(p,\rho) }{\pt \rho} - \frac{p}{\rho^2}  = - \frac{C_p}{\rho \beta} <0.
\end{equation*}
Combining it with \eqref{eq:EOS:h} yields
\begin{equation}\label{eq:conditionB}
\frac{\partial h(p,\rho)}{\partial \rho} <0.
\end{equation}

Utilizing  $h=h(p,\rho)=h(p,\rho(p,S))$ and the chain rule of derivation gives
\begin{align*}
\frac1{\rho} \overset{\eqref{eq:diffrelation}}{=} \frac{\pt h(p,S)}{\pt p}
= \frac{\pt h(p,\rho)}{\pt p} + \frac{\pt h(p,\rho)}{\pt \rho}  \frac{\pt \rho(p,S)}{\pt p}
\overset{\eqref{eq:cs2def}}{=} \frac{\pt h(p,\rho)}{\pt p} +  \frac{\pt h(p,\rho)}{\pt \rho}  \frac{1}{h c_s^2}.
\end{align*}
It follows that
\begin{equation}\label{eq:newcs2}
h\left(\frac1{\rho} - \frac{\pt h(p,\rho)}{\pt p} \right) / \left( \frac{\pt h(p,\rho)}{\pt \rho} \right)
=  \frac{1}{c_s^2}   \overset{\eqref{eq:cscondition}}{>} 1,
\end{equation}
which completes the proof by \eqref{eq:conditionB}.
\end{proof}

The  hypothesis of Lemma \ref{lem:gEOSC} is valid
for  most of compressible fluids, e.g. the gases.

Before ending this section, we give several special EOS.
	The most commonly used EOS, which is called the ideal
	EOS, is given by
\begin{equation}\label{eq:EOSideal}
h = 1 + \frac{\Gamma p }{(\Gamma -1)\rho},
\end{equation}
where  $\Gamma$ denotes the adiabatic index. 
In general, the adiabatic index $\Gamma$  is taken as $5/3$
for mildly relativistic or subrelativistic cases
and as $4/3$ for ultrarelativistic cases
where $e\gg \rho$.
%
Although the EOS \eqref{eq:EOSideal} is commonly used in RHDs,
it is a poor approximation for most relativistic astrophysical flows.
It is borrowed from nonrelativistic thermodynamics and
inconsistent with relativistic kinetic theory, see \citep{Ryu}.
The EOS \eqref{eq:EOSideal}  is a reasonable approximation only
if the gas is either strictly subrelativistic or ultrarelativistic.
When the gas is semirelativistic or  two-component, 
\eqref{eq:EOSideal} is no longer correct.

Since the correct equation of state for the relativistic perfect
gas has been recognized as being important, several investigations
with a more general equation of state have been reported
in numerical relativistic hydrodynamics.
For the one-component perfect gases,
 several general EOS  have been used in the literature. For example,
the first      is  \citep{Mathews,Mignoneetal:2005}
 \begin{equation}\label{eq:EOS:Mathews}
 h =  \frac{5p}{2\rho}  +  \sqrt{\frac{9p^2}{4\rho^2} + 1}.
 \end{equation}
 %
and the second   \citep{Sokolov} is described as follows
\begin{equation}\label{eq:EOS:Sokolov}
h =  \frac{2p}{\rho} + \sqrt{\frac{4p^2}{\rho^2} + 1},
\end{equation}
Recently,   a new approximate EOS in \citep{Ryu} is given   as follows
\begin{equation}\label{eq:EOS:Ryu}
h =   \frac{ 2(6 p^2 + 4 p\rho + \rho^2) }{ \rho (3 p + 2 \rho) }.
\end{equation}

It is not difficult to verify that
besides the conditions \eqref{eq:hcondition1} and \eqref{eq:gEOSC},
the  EOS \eqref{eq:EOSideal}--\eqref{eq:EOS:Ryu}
 satisfy
 that $e(p,\rho)$ is continuously differentiable in ${\mathbb R}^+\times {\mathbb {R}}^+$ and satisfies
\begin{equation}\label{eq:epto0}
\mathop{\lim }\limits_{p \to 0^+ } e(p,\rho)=0,\quad \mathop {\lim }\limits_{p \to +\infty } e(p,\rho) =  + \infty,
\end{equation}
for any fixed positive $\rho$.

\section{Admissible states}
 \label{sec:admis-state}

For the RHD equations \eqref{eqn:coneqn3d}, it is very natural and intuitive
to {define}  the (physical) admissible states $ \vec U$.
\begin{Def}
 The set of admissible states of the RHD equations
\eqref{eqn:coneqn3d} is defined by
\begin{equation}\label{EQ-adm-set01}
{\mathcal G} := \Big\{ { \left. \vec U = (D,\vec m,E)^{\rm T} \right| {\rho (\vec U) > 0,p(\vec U) > 0,e(\vec U)>0,v(\vec U)<1} } \Big\}.
\end{equation}
\end{Def}
Unfortunately, four conditions in  \eqref{EQ-adm-set01} are much difficultly verified
by the given value of the conservative vector $\vec U$,
since there is no explicit expression for the  transformation $\vec U \mapsto  (\rho,p,e,\vec v)^{\rm}$.
It also indicates that it very difficult to study the properties
of  ${\mathcal G}$ and develop the PCP schemes  for he RHD equations
\eqref{eqn:coneqn3d} with the EOS \eqref{eq:EOS:e} or \eqref{eq:EOS:h}.
In practice,  if giving the value of $\vec U$,
then one has to  iteratively solve a nonlinear algebraic equation,
e.g. an equation for  the unknown pressure $p$
 \begin{equation}\label{eq:solvePgEOS}
  E + p = D h\Big(p, \rho^{[\svec U]}(p)  \Big) \left(1- |\vec m|^2/(E+p)^2\right)^{-1/2},\quad p \in \mathbb{R}^+,
 \end{equation}
 where
 \begin{equation*}
 \rho^{[\svec U]}(p) := D\sqrt {1 - |\vec m|^2 /{(E + p)^2 }}.
  \end{equation*}
  Once the positive solution of the above equation is obtained, denoted by   $p(\vec U)$,
   other variables may be  sequentially  calculated by
\begin{equation}\label{eq:solveVRHOgEOS}
  {v(\vec U)}  = \frac{{ |\vec m|}}{{E + p(\vec U)}},\quad
\rho (\vec U) = D\sqrt {1 -  { v^2(\vec U)}   }  , \quad e(\vec U) = e( p(\vec U),\rho(\vec U) ).
\end{equation}

For the ideal EOS \eqref{eq:EOSideal} with $\Gamma \in (1,2]$,
it  has 
been rigorously proved in \cite{WuTang2015} that the physical constraints  in \eqref{EQ-adm-set01}
are equivalent to two explicit constraints on conservative vector
\begin{equation}\label{eq:NEcondition}
D>0,\quad q(\vec U):= E-  \sqrt{D^2+|\vec m|^2}>0.
\end{equation}
Actually,  for a  general EOS \eqref{eq:EOS:h}, they are still necessary for $\vec U \in {\cal G}$.

\begin{lemma}\label{lemma:ne}
Under the condition \eqref{eq:hcondition1}, the admissible state $\vec U \in {\cal G}$ must satisfy \eqref{eq:NEcondition}.
\end{lemma}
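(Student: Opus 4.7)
The plan is as follows. The first conclusion $D>0$ is immediate: admissibility gives $\rho>0$ and $v<1$, so $W=1/\sqrt{1-v^2}\ge 1$ and therefore $D=\rho W>0$. The real content is the strict inequality $q(\vec U)=E-\sqrt{D^2+|\vec m|^2}>0$, and I would tackle it by re-expressing everything in primitive variables. Using $\vec m=DhW\vec v$, $E=DhW-p$, and the identity $W^2v^2=W^2-1$, the target becomes
\[
q(\vec U)=(DhW-p)-D\sqrt{1+h^2(W^2-1)}.
\]

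The strategy is then standard: first verify that the ``linear'' term $DhW-p$ is positive, and then show that $(DhW-p)^2$ strictly dominates $D^2(1+h^2(W^2-1))=D^2+|\vec m|^2$. The first part is easy from \eqref{eq:hcondition1}, which in particular forces $h>p/\rho$; then $DhW=\rho hW^2\ge \rho h>p$. For the second, I would expand and observe that the desired inequality reduces to
\[
D^2(h^2-1)-2\,DhW\,p+p^2>0,
\]
or, substituting $D=\rho W$, to $\rho^2W^2(h^2-1)-2\rho h W^2 p+p^2>0$.

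The pivotal (and essentially only) nontrivial step is where \eqref{eq:hcondition1} enters quantitatively. I would recast \eqref{eq:hcondition1} as $h-p/\rho\ge\sqrt{1+p^2/\rho^2}$ and square to obtain the clean algebraic bound $\rho(h^2-1)\ge 2hp$, equivalently $\rho^2 W^2(h^2-1)\ge 2\rho h W^2 p$. Plugging this into the reduced inequality cancels the cross term and leaves $p^2>0$, completing the squared comparison; together with $DhW-p>0$ this yields $q(\vec U)>0$. I expect this squaring-and-cancellation trick to be the main obstacle: once the correct algebraic form of \eqref{eq:hcondition1} is spotted, everything else is bookkeeping, and the remaining positivity ingredients ($\rho,p>0$ and $v<1$ from $\vec U\in\mathcal G$) plug in routinely.
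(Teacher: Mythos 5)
Your proposal is correct and follows essentially the same route as the paper: establish $D>0$ and $E=\rho hW^2-p>0$, then show $E^2>D^2+|\vec m|^2$ by an algebraic expansion in which \eqref{eq:hcondition1} (used in the equivalent squared form $\rho(h^2-1)\ge 2hp$, i.e. $\rho^2(1+e)^2\ge\rho^2+p^2$) cancels the cross term. The only cosmetic differences are that you keep $D,W$ instead of substituting $\rho,v$ and draw strictness from the leftover $p^2>0$ rather than from $p^2v^2<p^2$.
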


\begin{proof}
Because $\rho$, $p$, and $e$ are positive  and  $0\le v<c=1$, it is easy to get
the following inequalities
\[
D = \frac{\rho }{{\sqrt {1 -   v^2 } }} > 0, \quad
E = \frac{{\rho h}}{{1 -  v^2 }} - p > \rho h - p \overset{\eqref{eq:h}}{=} \rho (1 + e) > 0.
\]
Using \eqref{eq:hcondition1} further gives
\begin{align*}
 E^2  - \left( {D^2  +  m^2 } \right) &= \left( {\frac{{\rho h}}{{1 - v^2 }} - p} \right)^2  - \frac{{\rho ^2 }}{{1 - v^2 }} - \left( {\frac{{\rho hv}}{{1 - v^2 }}} \right)^2  \\
 &= \left( {\frac{{\rho h}}{{1 - v^2 }}} \right)^2  +p^2 -2p{\frac{{\rho h}}{{1 - v^2 }}}
  - \frac{{\rho ^2 }}{{1 - v^2 }} - \left( {\frac{{\rho hv}}{{1 - v^2 }}} \right)^2
 \\
  &= \frac{1}{{1 - v^2 }}\left[ {\left( {\rho h - p} \right)^2  - \rho ^2  - p^2 v^2 } \right]\\
    &\overset{v< 1}{>}
    \frac{1}{{1 - v^2 }}\left[ {\rho ^2 \left( {1 + e} \right)^2  - \rho ^2  - p^2 } \right]
     \overset{\eqref{eq:hcondition1}}{>} 0.
\end{align*}
It follows that $q(\vec U)=E- \sqrt{D^2  +  m^2 }>0$.  The proof is completed.
\end{proof}


\begin{lemma}\label{thm:equDefgEOS}
If $\vec U=(D,\vec m,E)^{\rm T}$ satisfies  \eqref{eq:NEcondition}
and  $e(p,\rho)$ is continuously differentiable in ${\mathbb R}^+\times {\mathbb {R}}^+$, then  $\vec U$ belongs to ${\mathcal G}$ under the conditions \eqref{eq:hcondition1}, \eqref{eq:gEOSC}, and \eqref{eq:epto0}.
\end{lemma}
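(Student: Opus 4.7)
The strategy is to invert the conservative-to-primitive map by solving the implicit pressure equation \eqref{eq:solvePgEOS} and then checking that the recovered primitive variables automatically satisfy all four constraints in \eqref{EQ-adm-set01}. Since $q(\vec U)>0$ gives $E>\sqrt{D^2+|\vec m|^2}\geq|\vec m|$, for every $p\geq 0$ the quantity $\rho^{[\svec U]}(p)=D\sqrt{1-|\vec m|^2/(E+p)^2}$ is well defined and strictly positive, so we may introduce the continuous function
\begin{equation*}
f(p):=\frac{D\,h\!\left(p,\rho^{[\svec U]}(p)\right)}{\sqrt{1-|\vec m|^2/(E+p)^2}}-(E+p),\qquad p\geq 0,
\end{equation*}
where continuity at $p=0$ is secured by extending $e(\cdot,\rho)$ to $p=0$ by the limit in \eqref{eq:epto0}, giving $h(0,\rho)=1$. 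A positive root $p^\star$ of $f$ will play the role of $p(\vec U)$.

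The plan is then to apply the intermediate value theorem to $f$ on $(0,\infty)$. At $p=0^+$ one computes
\begin{equation*}
f(0^+)=\frac{DE}{\sqrt{E^2-|\vec m|^2}}-E,
\end{equation*}
and the hypothesis $E^2>D^2+|\vec m|^2$ (i.e. $q(\vec U)>0$) is exactly equivalent to $f(0^+)<0$, which is the key place where the assumption on $\vec U$ is used. For $p\to+\infty$, I would use the kinetic-theory bound \eqref{eq:hcondition1}, which implies $h(p,\rho)\geq 2p/\rho$ and hence
\begin{equation*}
\frac{D\,h(p,\rho^{[\svec U]}(p))}{\sqrt{1-|\vec m|^2/(E+p)^2}}\;\geq\;\frac{2p}{1-|\vec m|^2/(E+p)^2},
\end{equation*}
whose right-hand side tends to $2p$ and dominates $E+p$, so $f(p)\to+\infty$. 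Continuity of $f$ then yields a root $p^\star\in(0,\infty)$ with $f(p^\star)=0$, which is precisely \eqref{eq:solvePgEOS}.

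Given such $p^\star>0$, I would define $\rho(\vec U):=\rho^{[\svec U]}(p^\star)$ and $v(\vec U):=|\vec m|/(E+p^\star)$. Positivity of $\rho$ is already built in, and $v<1$ follows from $E+p^\star>E>|\vec m|$. It remains to show $e(\vec U)>0$. Here the plan is to exploit Lemma \ref{lem:gEOSC}: the inequality \eqref{eq:gEOSC} combined with $\partial h/\partial\rho<0$ and $h>0$ forces $\partial h(p,\rho)/\partial p>1/\rho$, so that $\partial e(p,\rho)/\partial p=\partial h/\partial p-1/\rho>0$. Thus $e(\cdot,\rho)$ is strictly increasing, and together with $\lim_{p\to 0^+}e(p,\rho)=0$ from \eqref{eq:epto0} this gives $e(p^\star,\rho(\vec U))>0$. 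All four conditions defining $\mathcal{G}$ are therefore satisfied.

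The main obstacle is the behavior of $f$ at infinity: without a lower bound on $h$ that grows at least linearly in $p$, one cannot conclude $f(p)\to+\infty$, and condition \eqref{eq:hcondition1} (derived in Appendix A from relativistic kinetic theory) is exactly what is needed here. A secondary subtle point is the proof of $e>0$, which is not a direct consequence of $p>0$ but only of the combined monotonicity encoded in \eqref{eq:gEOSC} with the limit \eqref{eq:epto0}; this is where the thermal-expansion hypothesis of Lemma \ref{lem:gEOSC} enters the argument.
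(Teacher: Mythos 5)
Your existence argument is sound and follows the paper's route: you set up the same pressure equation \eqref{eq:solvePgEOS} (your $f(p)$ is the paper's $\Psi^{[\svec U]}(p)$ divided by the positive factor $1-|\vec m|^2/(E+p)^2$, so the two have the same positive zeros), you get $f(0^+)<0$ exactly from $q(\vec U)>0$, and your way of forcing $f(p)\to+\infty$ through the bound $h\ge 2p/\rho$ from \eqref{eq:hcondition1} is a correct (and slightly more elementary) alternative to the paper's use of $\lim_{p\to+\infty}e(p,\rho)=+\infty$ in \eqref{eq:epto0}. Your verification of $v<1$, $\rho>0$, and $e>0$ (via $\partial h/\partial p>1/\rho$ from \eqref{eq:gEOSC} together with \eqref{eq:epto0}) also matches the paper's final step.

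The genuine gap is that you never prove \emph{uniqueness} of the positive root, which is roughly half of the paper's proof and is not optional here. The set ${\mathcal G}$ in \eqref{EQ-adm-set01} is defined through the functions $p(\vec U),\rho(\vec U),e(\vec U),v(\vec U)$, and these are defined by \eqref{eq:solvePgEOS}--\eqref{eq:solveVRHOgEOS} as \emph{the} positive solution of the pressure equation and the quantities computed from it; if $\Psi^{[\ssvec U]}$ could have several positive zeros, the conservative-to-primitive map would be ambiguous and the statement ``$\vec U\in{\mathcal G}$'' would not yet be established in the sense the paper uses it (and the later PCP limiting procedure explicitly relies on root-finding for \eqref{eq:solvePgEOS} returning the physical solution). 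The paper closes this by showing, via \eqref{eq:gEOSproof4} together with the root identity \eqref{eq:gEOSproof5} and both inequalities in \eqref{eq:gEOSC}, that $\frac{{\rm d}\Psi^{[\ssvec U]}}{{\rm d}p}>0$ at every positive zero, and then deriving a contradiction from the intermediate value theorem if two zeros existed. Your proposal uses \eqref{eq:gEOSC} only for the monotonicity of $e$ in $p$; to be complete you would need to add this derivative estimate at the roots (or an equivalent monotonicity/uniqueness argument).
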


\begin{proof}
Consider the pressure function defined by
\[
\Psi^{[\ssvec U]} (p) :=   Dh\left( {p,  \rho^{[\ssvec U]}(p)  } \right)\sqrt {1 - \frac{{ |\vec m|^2 }}{{(E + p)^2 }}} -(E + p)\left(1- \frac{{ |\vec m|^2 }}{{(E + p)^2}} \right),\qquad p \in [0,+\infty),
\]
which is related to \eqref{eq:solvePgEOS}.
Obviously, for given $\vec U$ satisfying  \eqref{eq:NEcondition}, $\Psi^{[\ssvec U]} (p) \in C^1[0,+\infty)$  and its derivative satisfies
\begin{align}\nonumber
 \frac{ {\rm d} \Psi^{[\ssvec U]} (p)} {{\rm d} p }&= D \left[  {\frac{{\partial h }}{{\partial p}}} \left(p, \rho^{[\ssvec U]}(p)  \right)  \sqrt {1 - \frac{{|\vec m|^2 }}{{(E + p)^2 }}}  + \frac{{D |\vec m|^2 }}{{(E + p)^3 }}  {\frac{{\partial h}}{{\partial \rho }}}   \left(p, \rho^{[\ssvec U]}(p)  \right)   \right]
 \\ \nonumber
&~~~~~ + \frac{{D |\vec m|^2 }}{{(E + p)^3 }} h\left( {p,  \rho^{[\ssvec U]}(p)  } \right) \left( {1 - \frac{{|\vec m|^2 }}{{(E + p)^2 }}} \right)^{ - \frac{1}{2}}
- \frac{{ |\vec m|^2 }}{{(E + p)^2 }} -1
\\ \nonumber
& \overset{\eqref{eq:gEOSC}}{>}
D \left[\sqrt {1 - \frac{{ |\vec m|^2 }}{{(E + p)^2 }}}  - \frac{{ D |\vec m|^2 h\left( {p,  \rho^{[\ssvec U]}(p)  } \right)  }}{{(E + p)^3  }}     \right] \frac{{\partial h}}{{\partial p}} \left( {p,  \rho^{[\ssvec U]}(p)  } \right)
 \\ 
&  + \frac{{2 D |\vec m|^2 }}{{(E + p)^3 }}h\left( {p,  \rho^{[\ssvec U]}(p)  } \right)   \left( {1 - \frac{{ |\vec m|^2 }}{{(E + p)^2 }}} \right)^{ - \frac{1}{2}}
- \frac{{ |\vec m|^2 }}{{(E + p)^2 }} -1
=: \hat {\Psi}^{[\ssvec U]} (p). \label{eq:gEOSproof4}
\end{align}
Thanks to \eqref{eq:h} and \eqref{eq:epto0}, one  yields
$$
\mathop{\lim }\limits_{p \to 0^+ }  h\left( {p,  \rho^{[\ssvec U]}(p)  } \right) = 1,\quad
\mathop {\lim }\limits_{p \to +\infty }  e\left( {p,  \rho^{[\ssvec U]}(p)  } \right) = +\infty,
$$
which implies
\begin{align*}
& \mathop{\lim }\limits_{p \to 0^+ } \Psi^{[\ssvec U]} (p) = D \sqrt {1 - \frac{{ |\vec m|^2 }}{{E ^2 }}} + \frac{{ |\vec m|^2 }}{{E }} -E = \left( D-\sqrt{E^2-|\vec m|^2}\right) \sqrt {1 - \frac{{ |\vec m|^2 }}{{E ^2 }}} < 0, \\
& \mathop{\lim }\limits_{p \to +\infty } \Psi^{[\ssvec U]} (p) =
\mathop{\lim }\limits_{p \to +\infty }  D \left [1+ e \left( {p,  \rho^{[\ssvec U]}(p)  } \right) \right] \sqrt {1 - \frac{{|\vec m|^2 }}{{(E + p)^2 }}}
+ \frac{{ |\vec m|^2 }}{{E + p}} -E  = + \infty.
\end{align*}
By the {\em intermediate value theorem}, $\Psi^{[\ssvec U]} (p)$ has at least one positive zero, that is to say, there exist at least one positive solution to the algebraic equation $\Psi^{[\ssvec U]} (p)=0$ or \eqref{eq:solvePgEOS}.

The following task is to  prove the uniqueness of positive zero of $\Psi^{[\ssvec U]} (p)$.
The proof  by contradiction is used here.
Assume that $\Psi^{[\ssvec U]} (p)$ has more than one positive zeros and the smallest two are respectively denoted  by $p_1(\vec U)$ and $p_2(\vec U)$ satisfying $p_2(\vec U)>p_1(\vec U)>0$.
Because the equation $\Psi^{[\ssvec U]} (p) =0$ is equivalent to \eqref{eq:solvePgEOS},
one has  the identity
\begin{equation}\label{eq:gEOSproof5}
D  h\left( {p_i, \rho^{[\ssvec U]} (p_i)} \right) =(E+p_i)\sqrt{ 1-\frac{ |\vec m|^2}{(E+p_i)^2} }, \quad i = 1, 2.
\end{equation}
Combing such identity and the condition \eqref{eq:NEcondition} gives $h\left( {p_i, \rho^{[\ssvec U]} (p_i)} \right) >0$,
and further using \eqref{eq:gEOSC} yields
\begin{equation}\label{eq:proof66}
\frac{{\partial h}}{{\partial p}} \left(p_i, \rho^{[\ssvec U]}(p_i) \right)>\frac{1}{\rho^{[\ssvec U]} (p_i)}>0.
\end{equation}
Combining \eqref{eq:gEOSproof5}--\eqref{eq:proof66} with \eqref{eq:gEOSproof4} gives
\begin{align*}
\frac{ {\rm d} \Psi^{[\ssvec U]} } { {\rm d} p } (p_i) & > \hat {\Psi}^{[\ssvec U]} (p_i) \overset{\eqref{eq:gEOSproof5}}{=} D  \left( {1 - \frac{{|\vec m|^2 }}{{(E + p_i)^2 }}} \right)^{ \frac{3}{2}}
   \frac{{\partial h}}{{\partial p}} \left(p_i, \rho^{[\ssvec U]}(p_i) \right)+  \frac{{ |\vec m|^2 }}{{(E + p_i)^2 }} -1
   \\
   & \overset{\eqref{eq:proof66}}{>}  D  \left( {1 - \frac{{ |\vec m|^2 }}{{(E + p_i)^2 }}} \right)^{ \frac{3}{2}}
    \frac{1}{\rho^{[\ssvec U]} (p_i)} +  \frac{{ |\vec m|^2 }}{{(E + p_i)^2 }} -1 =0,
    \quad i = 1, 2.
\end{align*}
It indicates
\[
\mathop {\lim }\limits_{\delta p \to 0} \frac{{\Psi^{[\ssvec U]} (p_i  + \delta p) - \Psi^{[\ssvec U]} (p_i )}}{\delta p} =  \frac{ {\rm d} \Psi^{[\ssvec U]} } { {\rm d} p }  (p_i ) > 0,\quad i = 1, 2.
\]
By $\Psi^{[\ssvec U]} (p_i )=0$ and the $(\varepsilon, \delta)$-definition of limit, for $\varepsilon_i= \frac{1}{2} \frac{ {\rm d} \Psi^{[\ssvec U]} } { {\rm d} p } (p_i ) >0 $, there exists $\delta_i >0$ such that
$$\left | \frac{{ \Psi^{[\ssvec U]} (p_i  + \delta p)}}{\delta p} - \frac{ {\rm d} \Psi^{[\ssvec U]} } { {\rm d} p } (p_i ) \right| < \varepsilon_i,\quad \forall \delta p \in ( -\delta_i, \delta_i ),$$
which is equivalent to
$$  \varepsilon_i < \frac{{ \Psi^{[\ssvec U]} (p_i  + \delta p) }}{\delta p} < 3 \varepsilon_i ,\quad \forall \delta p \in ( -\delta_0, \delta_0 ),$$
where $\delta_0 = \min \left\{  \delta_1,\delta_2, \frac{p_2-p_1}{2} \right\}>0$.
 Therefore it holds that $(p_1 +  \frac{\delta_0}{2},~p_2 -  \frac{\delta_0}{2}) \subset (p_1,p_2) $ and
\begin{align*}
\Psi^{[\ssvec U]} \left( p_1 +  \frac{\delta_0}{2} \right) > 0,\quad  \Psi^{[\ssvec U]} \left( p_2 -  \frac{\delta_0}{2} \right) <0.
\end{align*}
 Thanks to the {\em intermediate value theorem}, $\Psi^{[\ssvec U]} (p)$ has zero in the interval $\left( p_1 +  \frac{\delta_0}{2},p_2 -  \frac{\delta_0}{2}\right)$. It conflicts  with the assumption that $p_1$ and $p_2$ are the smallest two positive zeros of $\Psi^{[\ssvec U]} (p)$. Hence the assumption does not hold and $\Psi^{[\ssvec U]}  (p)$ has unique positive zero, denoted by $p(\vec U)$.
 Substituting the  positive pressure $p(\vec U)$
 into \eqref{eq:solveVRHOgEOS} and using \eqref{eq:NEcondition} gives
\[
  {v(\vec U)}  = \frac{{ |\vec m|}}{{E + p(\vec U)}} <  \frac{{ |\vec m|}}{E} < 1,\quad
\rho (\vec U) = D\sqrt {1 -  { v^2(\vec U)}   }  > 0.
\]
For any $p,\rho \in \mathbb{R}^+$, utilizing \eqref{eq:gEOSC} gives
$$
\frac{\pt e(p,\rho)}{\pt p}>0,
$$
 which implies
$$
e(\vec U) = e( p(\vec U),\rho(\vec U) ) >  \mathop{\lim }\limits_{p \to 0^+ }  e( p,\rho(\vec U) )  \overset{  \eqref{eq:epto0} }{=}   0.
$$
In conclusion, $\vec U \in {\mathcal G}$. The proof is completed.
\end{proof}

\begin{remark}
Under the EOS conditions \eqref{eq:hcondition1}, \eqref{eq:gEOSC}, and \eqref{eq:epto0},
Lemmas \ref{lemma:ne} and \ref{thm:equDefgEOS} indicate that
the admissible set ${\mathcal G}$
 is equivalent to the set
\begin{equation}\label{EQ-adm-set02}
\hat {\mathcal G} := \left\{ { \left. \vec U = (D,\vec m,E)^{\rm T} \right| {D>0,~q(\vec U)>0} } \right\}.
\end{equation}
In comparison with     ${\mathcal G}$, 
two constraints in the set $\hat{\mathcal G}$  
are directly imposed on the conservative variables such that they are very easy to be verified when the value of $\vec U$ is given.
For that reason, the further discussion will be always
performed under the conditions \eqref{eq:hcondition1}, \eqref{eq:gEOSC} and \eqref{eq:epto0}.  
\end{remark}

With the help of the equivalence between ${\mathcal G} $ and $\hat{\mathcal G} $,
 the convexity of admissible state set   ${\mathcal G} $  may be proved by exactly following the proof of Lemma 2.2
 in \cite{WuTang2015}.

\begin{lemma}
\label{lam:convex}
The function $q(\vec U)$ is  concave and Lipschitz continuous with respect to  $\vec U$. The admissible set $\hat {\mathcal G}$ is a open convex set.
Moreover, $\lambda {\vec U}_1 + (1-\lambda) {\vec U}_0 \in \hat {\mathcal G} $ for any
${\vec U}_1 \in \hat {\mathcal G}$, ${\vec U}_0 \in  \hat {\mathcal G} \cup \partial \hat {\mathcal G}$, and  $\lambda \in(0,1)$.
\end{lemma}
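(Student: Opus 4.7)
The plan is to reduce everything to elementary convex analysis applied to the explicit function $q(\vec U)=E-\sqrt{D^2+|\vec m|^2}$. First I would observe that the map $(D,\vec m)\mapsto \sqrt{D^2+|\vec m|^2}$ is the Euclidean norm on $\mathbb{R}^{d+1}$ and hence convex; therefore $-\sqrt{D^2+|\vec m|^2}$ is concave, and adding the linear term $E$ preserves concavity. This gives concavity of $q$ on all of $\mathbb{R}^{d+2}$, not just on $\hat{\mathcal G}$. The Lipschitz property would follow from the reverse triangle inequality: $\bigl|q(\vec U_1)-q(\vec U_2)\bigr|\le |E_1-E_2|+\bigl|\,\|(D_1,\vec m_1)\|-\|(D_2,\vec m_2)\|\,\bigr|\le |E_1-E_2|+\|(D_1-D_2,\vec m_1-\vec m_2)\|\le \sqrt{2}\,\|\vec U_1-\vec U_2\|$, so $q$ is globally Lipschitz with constant at most $\sqrt 2$.

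Next, for $\hat{\mathcal G}$ itself, I would write it as the intersection
\[
\hat{\mathcal G}=\{\vec U:D(\vec U)>0\}\cap\{\vec U:q(\vec U)>0\}.
\]
The first set is an open half-space since $D$ is a linear coordinate functional, hence open and convex. The second set is the strict superlevel set of a concave (and hence continuous) function, which is a standard result of convex analysis: it is open (by continuity of $q$) and convex (if $q(\vec U_0),q(\vec U_1)>0$ and $\lambda\in[0,1]$, then $q(\lambda\vec U_1+(1-\lambda)\vec U_0)\ge \lambda q(\vec U_1)+(1-\lambda)q(\vec U_0)>0$). The intersection of two open convex sets is open and convex, yielding the second assertion.

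For the ``moreover'' statement, I would exploit that only strict positivity on one side is needed. Let $\vec U_1\in\hat{\mathcal G}$, so $D_1>0$ and $q(\vec U_1)>0$; let $\vec U_0\in\hat{\mathcal G}\cup\partial\hat{\mathcal G}$, so (by continuity) $D_0\ge 0$ and $q(\vec U_0)\ge 0$; and let $\lambda\in(0,1)$. Writing $\vec U_\lambda=\lambda\vec U_1+(1-\lambda)\vec U_0$, linearity of $D$ gives $D(\vec U_\lambda)=\lambda D_1+(1-\lambda)D_0\ge \lambda D_1>0$, and concavity of $q$ gives $q(\vec U_\lambda)\ge \lambda q(\vec U_1)+(1-\lambda)q(\vec U_0)\ge \lambda q(\vec U_1)>0$, so $\vec U_\lambda\in\hat{\mathcal G}$.

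I do not anticipate any real obstacle: the whole argument rests on the explicit concavity of $-\|\cdot\|$ and on the equivalence between $\mathcal G$ and $\hat{\mathcal G}$ already established in Lemmas~\ref{lemma:ne} and~\ref{thm:equDefgEOS}, which allows us to work entirely with the simple algebraic constraints on $\vec U$ rather than with the implicitly defined primitive variables. The only mildly delicate point is keeping track of the asymmetric roles of $\vec U_1$ and $\vec U_0$ in the last part, which is handled by using strict positivity at $\vec U_1$ together with $\lambda>0$.
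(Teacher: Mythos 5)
Your argument is correct and is essentially the same as the paper's (which simply defers to Lemma 2.2 of \cite{WuTang2015}): concavity of $q$ from convexity of the Euclidean norm plus linearity of $E$, global Lipschitz continuity via the reverse triangle inequality, openness and convexity of $\hat{\mathcal G}$ as an intersection of the half-space $\{D>0\}$ with a strict superlevel set of a concave continuous function, and the ``moreover'' part by exploiting strict positivity at $\vec U_1$ together with $\lambda>0$. Nothing further is needed.
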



By   the convexity of ${\mathcal G}$,  
some  properties of ${\mathcal G}$ can be further obtained.

\begin{lemma}
\label{lam:propertyG}
If assuming $\vec U \in {\mathcal G}$, then one has
\begin{itemize}[\hspace{0em}$\bullet$]
  \item[(i). {(Scaling invariance)}] $\lambda \vec U \in {\mathcal G}$, for all scalar $\lambda  > 0$.
  \item[(ii). { (Orthogonal invariance)}] $\vec T \vec U \in {\mathcal G}$,
  where $\vec T=\mbox{\rm diag}\{1,\vec T_{d},1\}$ and
  $\vec T_{d}$ denotes any orthogonal matrix of size $d$.
  \item[(iii). {(Lax-Friedrichs splitting)}]  $\vec U \pm c^{-1}{{\vec F_i (\vec U)}} \in {\mathcal G}\cup \partial {\mathcal G}$
  and $\vec U \pm {\alpha}^{-1}{{\vec F_i (\vec U)}} \in {\mathcal G}$
  for any $\alpha > c=1$, $i=1, \cdots, d$,
 where $\partial {\mathcal G}$ denotes the boundary of ${\mathcal G}$.
\end{itemize}
\end{lemma}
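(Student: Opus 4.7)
The plan is to work entirely with the equivalent characterization $\mathcal{G} = \hat{\mathcal{G}}$, so that every membership check reduces to verifying $D>0$ together with the single scalar inequality $q(\vec U) = E - \sqrt{D^2 + |\vec m|^2} > 0$. I will exploit two structural features of $q$: it is positively homogeneous of degree one, and it depends on $\vec m$ only through $|\vec m|$. Part~(i) will then be immediate, since $\lambda D > 0$ and $q(\lambda \vec U) = \lambda q(\vec U) > 0$ whenever $\lambda > 0$. Part~(ii) will be equally direct: because $\vec T_d$ is orthogonal, $|\vec T_d \vec m| = |\vec m|$, while the first and last components of $\vec U$ are untouched, so both $D$ and $q$ are preserved.

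For part~(iii), I will first settle the endpoint $\alpha = c = 1$ and then bootstrap to all $\alpha > 1$ by a convex-combination argument. At the endpoint, the components read
\begin{equation*}
D^{\pm} = D(1 \pm v_i), \quad \vec m^{\pm} = (1 \pm v_i) \vec m \pm p \vec e_i, \quad E^{\pm} = E \pm m_i.
\end{equation*}
The bound $|v_i| \le v < 1$ immediately yields $D^{\pm} > 0$, and Lemma~\ref{lemma:ne} gives $E > \sqrt{D^2 + |\vec m|^2} \ge |m_i|$, so $E^{\pm} > 0$ as well. It will then suffice to establish $(E^{\pm})^2 \ge (D^{\pm})^2 + |\vec m^{\pm}|^2$. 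I plan to expand the squares, substitute $\vec m = DhW \vec v$, $E = DhW - p$, and $m_i = DhW v_i$ so that the mixed $p m_i$ terms collapse, and then use $D^2 = \rho^2/(1-v^2)$ together with $(DhW)^2(1-v^2) = \rho^2 h^2/(1-v^2)$. After this reduction I expect the whole difference to factor as
\begin{equation*}
(E^{\pm})^2 - (D^{\pm})^2 - |\vec m^{\pm}|^2 \;=\; \frac{(1 \pm v_i)^2 \,\rho}{1 - v^2} \,\bigl[\rho(h^2 - 1) - 2hp\bigr],
\end{equation*}
whose bracket is nonnegative precisely when $h \ge p/\rho + \sqrt{1 + p^2/\rho^2}$, which is exactly the hypothesis \eqref{eq:hcondition1}. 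This will give $\vec U \pm c^{-1} \vec F_i(\vec U) \in \mathcal{G} \cup \partial \mathcal{G}$.

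To upgrade to arbitrary $\alpha > 1$, I will write
\begin{equation*}
\vec U \pm \alpha^{-1} \vec F_i(\vec U) \;=\; \Bigl(1 - \frac{1}{\alpha}\Bigr) \vec U \;+\; \frac{1}{\alpha} \bigl(\vec U \pm c^{-1} \vec F_i(\vec U)\bigr),
\end{equation*}
a convex combination with weight $1/\alpha \in (0,1)$ of $\vec U \in \mathcal{G}$ and of the endpoint state, which lies in $\mathcal{G} \cup \partial\mathcal{G}$, and then invoke the last assertion of Lemma~\ref{lam:convex} to conclude membership in $\mathcal{G}$. The main technical obstacle is the algebraic reduction leading to the bracketed factor above: this is the unique step where the specific flux structure, the nonlinear Lorentz coupling $W = (1-v^2)^{-1/2}$, and the thermodynamic bound \eqref{eq:hcondition1} must interlock into a single clean inequality, and I expect that getting the cancellations right will require tracking the $2p\mu(1\pm v_i)$ cross terms carefully. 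Everything else is either bookkeeping or an appeal to results already established.
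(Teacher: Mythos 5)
Your proposal is correct and follows essentially the same route as the paper: parts (i)--(ii) via the equivalent set $\hat{\mathcal G}$, and part (iii) by computing the endpoint state $\vec U\pm c^{-1}\vec F_i(\vec U)$, reducing $(E^\pm)^2-(D^\pm)^2-|\vec m^\pm|^2$ to the factor $(1\pm v_i)^2W^2\rho\,[\rho(h^2-1)-2hp]$ (which I verified matches the paper's bracket $\rho^2+p^2-(\rho+\rho e)^2$ up to sign and is nonnegative exactly under \eqref{eq:hcondition1}), then invoking the convexity statement of Lemma \ref{lam:convex} for $\alpha>1$. The only cosmetic difference is your positivity argument $E^\pm=E\pm m_i>0$ via Lemma \ref{lemma:ne}, which is a slight shortcut compared with the paper's direct chain of inequalities and is equally valid.
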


\begin{proof}
The proof of the properties (i) and (ii)  is   direct and easy  via
the definition  of $\hat{\mathcal G}$ and omitted here.
The following task is to prove the property (iii).

For any given $i \in \{1,2,\cdots,d\}$, if using $D^ \pm,\vec m^ \pm$, and $E^ \pm $ to denote
three  component of the vector
$\vec U \pm  c^{-1} {{\vec F_i (\vec U)}} $, then it is convenient to yield
\begin{align*}
D^ \pm  &= D\left( {1 \pm v_i} \right) > 0,
\\[1mm]
 E^ \pm  & = E \pm m_i
 =(\rho hW^2-p)\pm \rho h W^2v_i \ge \rho h W^2 ( 1-|v_i| ) -p \\[2mm]
 & \ge  \frac{\rho h} {1+v} - p  > \frac{\rho h} { 2} -p  \overset {\eqref{eq:hcondition1}} {\ge} \frac{1}{2}\big( \sqrt{\rho^2+p^2} - p \big) >0.
\end{align*}
Further using {\eqref{eq:hcondition1}} gives
\begin{align*}
 \left( {D^ \pm  } \right)^2  + |\vec m^ \pm  |^2  - \left( {E^ \pm  } \right)^2
&  = \left( {1 \pm v_i } \right)^2 W^2 \left[ {\rho ^2  + p^2  - \left( \rho+\rho e \right)^2 } \right]  {\le}  0.
\end{align*}
It follows that $q(\vec U^{\pm}) \ge 0$, and  $\vec U \pm  c^{-1} {{\vec F_i (\vec U)}} \in \hat{\cal G} \cup  \pt \hat{\cal G}$.
On the other hand, for any~$\alpha > c=1$, using the convexity of ${\cal G}$ {and}
the above result gives
$$
\vec U \pm {\alpha}^{-1}{{\vec F_i (\vec U)}} = \left(1-\frac{c}{\alpha} \right) \vec U + \frac{c}{\alpha} \vec U^\pm \in \hat{\cal G}.
$$
The proof is completed.
\end{proof}

\section{Numerical methods}
\label{sec:scheme}

This section begins   to develop PCP central discontinuous Galerkin methods for the 1D and 2D special RHD equations \eqref{eqn:coneqn3d}.

\subsection{1D case}\label{sec:1Dscheme}

For the sake of convenience, this subsection will use the symbol $x$ to replace the independent variable $x_1$ in \eqref{eqn:coneqn3d}.
Let $\{ I_j:=( x_{j-\frac12},x_{j+\frac12})\}$
be a uniform partition of the 1D spatial domain $\Omega$ with a constant spatial step-size
$\Delta x=x_{j+\frac12}-x_{j-\frac12}$. With $x_j=\frac12(x_{j+\frac12}+x_{j-\frac12})$,
define a dual partition $\{ J_{j+\frac12}:=\left( x_j,x_{j+1}\right)\}$.
The central DG methods seek two approximate solutions
$\vec U_h^I(t,x)$ and $\vec U_h^J(t,x)$  on those mutually dual meshes $\{ I_j\}$ and
 $\{ J_{j+\frac12}\}$,
 where for each  $t \in (0,T_f]$,
each component of $\vec U_h^I$ (resp. $\vec U_h^J$) belongs to the finite dimensional space of discontinuous piecewise polynomial functions, ${\cal V}_h^I$ (resp. ${\cal V}_h^J$), defined by
\begin{align*}
&{\cal V}_h^I :=\left\{ \left. w(x) \in L^1(\Omega) \right|  w(x) |_{I_j} \in  {\mathbb{P}}^K (I_j)  \right\} ,\\
&{\cal V}_h^J :=\left\{ \left. w(x) \in L^1(\Omega) \right|  w(x) |_{J_{j+\frac12}} \in  {\mathbb{P}}^K (J_{j+\frac12})  \right\} ,
\end{align*}
here ${\mathbb{P}}^K (I_j)$ and $ {\mathbb{P}}^K (J_{j+\frac12}) $ denote two spaces of polynomial of degree at most $K$  on the cells $I_j$ and  $J_{j+\frac12}$, respectively, and
 $K$ is assumed to be a constant over the whole meshes.

Consider the central DG spatial discretization for $\vec U_h^I$.
Using  a test function $w(x)\in  {\mathbb{P}}^K (I_j)$ to multiply \eqref{eqn:coneqn3d} with $d=1$
and integrating by parts over the cell $I_j$ give
\begin{align}
\displaystyle
\frac{\rm d}{{\rm d}t} \int_{I_j }  \vec Uw {\rm d}x =  \int_{I_j } \vec F_1 \left( \vec U\right) \frac{{\rm d}w}{{\rm d}x}  {\rm d}x
+ \vec F_1 \left( \vec U(t,x_{j-\frac12})\right) w(x_{j-\frac12})
- \vec F_1 \left( \vec U(t,x_{j+\frac12})\right) w(x_{j+\frac12}) . \label{eq:integrating}
\end{align}
Different from the standard DG discretization, the central DG discretization on the mesh $\{I_j\}$
(resp.  $\{ J_{j+\frac12}\}$) use its dual solution  $\vec U_h^J$ (resp. $\vec U_h^I$)  to compute the volume and surface integrals related to the flux $\vec F$. 
Specifically,  replacing the exact solution $\vec U$ at the left- and right-hand sides of \eqref{eq:integrating} with  the approximate solutions $\vec U_h^I$ and $\vec U_h^J$, respectively, gives
\begin{align}\nonumber
\displaystyle
\frac{\rm d}{{\rm d}t} \int_{I_j } \vec U_h^I w {\rm d}x = &  \frac{1}{\tau_{\max}} \int_{I_j } \left( \vec U_h^J - \vec U_h^I \right) w {\rm d} x +  \int_{I_j } \vec F_1 \left( \vec U_h^J \right) \frac{{\rm d}w}{{\rm d}x}  {\rm d}x \\[2mm]
&+ \vec F_1 \left( \vec U_h^J (t,x_{j-\frac12})\right) w(x_{j-\frac12})
- \vec F_1 \left( \vec U_h^J (t,x_{j+\frac12})\right) w(x_{j+\frac12}), \label{eq:semi}
\end{align}
where the first term at the right-hand side is an additional numerical dissipation term
and important for the stability of  central DG methods \citep{Liu2008},
and $\tau_{\max}$ is the maximum time stepsize allowed by the CFL condition \citep{Liu2007}.
The resulting central DG discretization   \eqref{eq:semi}
does  not need numerical fluxes based on exact or approximate Riemann solvers,
since the solutions or fluxes are evaluated  at the cell  interface $x_{j\pm \frac12}$,
i.e. the centers of  dual cell  $ J_{j\pm \frac12}$,  where  the solutions $\vec U_h^J$ are
 continuous.
Due to the possible discontinuity of $\vec U_h^J$ at $x=x_j$, the second integration at the right-hand side of \eqref{eq:semi} is usually split into two parts
\begin{equation}\label{eq:Int2Parts}
\int_{I_j } \vec F_1 \left( \vec U_h^J \right) \frac{{\rm d}w}{{\rm d}x}  {\rm d}x =
\int_{x_{j-\frac12} }^{x_j} \vec F_1 \left( \vec U_h^J \right) \frac{{\rm d}w}{{\rm d}x}  {\rm d}x
+ \int_{x_{j} }^{x_{j+\frac12}} \vec F_1 \left( \vec U_h^J \right) \frac{{\rm d}w}{{\rm d}x}  {\rm d}x,
\end{equation}
which may    be evaluated approximately by numerical quadrature.

Let $\left\{ \Phi_j^{(\mu)}(x)\right\}_{\mu=0}^K$ denote a local orthogonal basis
of the polynomial space ${\mathbb {P}}^K(I_j)$,
and express the DG approximate solution $\vec U_h^I$  as
\begin{equation}\label{eq:expressI}
\vec U_h^I(t,x) = \sum\limits_{\mu = 0}^{K} \vec U_j^{I,(\mu)} (t)  \Phi_j^{(\mu)}(x) =:  \vec U_j^I(t,x), \quad  ~x \in I_j.
\end{equation}
If substituting \eqref{eq:expressI} into \eqref{eq:semi}, taking the test function $w(x)$ as $\Phi_j^{(\nu)}(x), \nu=0,1,\cdots,K$, respectively, and applying a $Q$-point Gaussian quadrature to the integrations in \eqref{eq:Int2Parts}, then the  semi-discrete central DG discretization on the mesh $\{I_j\} $
may be reformed as follows
\begin{align} \nonumber
& \sum\limits_{\mu = 0}^{K} \left( \int_{I_j} \Phi_j^{(\mu)} (x) \Phi_j^{(\nu)} (x) {\rm d}x  \right) \frac{{\rm d}\vec U_j^{I,(\mu)} (t)}{{\rm d}t}
  = \frac{1}{\tau_{\max}} \int_{I_j } \left( \vec U_h^J - \vec U_h^I \right)  \Phi_j^{(\nu)} (x) {\rm d} x \\[2mm] \nonumber
  & \quad  + \frac{\Delta x}{2} \sum\limits_{\alpha = 1}^{Q} \omega_{\alpha} \left(
  \vec F_1 \left( \vec U_h^J (t,x_{j-\frac14}^{\alpha}) \right) \frac{{\rm d}\Phi_j^{(\nu)}}{{\rm d}x} (x_{j-\frac14}^{\alpha})
  + \vec F_1 \left( \vec U_h^J (t,x_{j+\frac14}^{\alpha}) \right) \frac{{\rm d}\Phi_j^{(\nu)}}{{\rm d}x} (x_{j+\frac14}^{\alpha})
   \right)\\[2mm] \label{eq:schemeI:1D}
   & \quad + \vec F_1 \left( \vec U_h^J (t,x_{j-\frac12})\right) \Phi_j^{(\nu)}(x_{j-\frac12})
- \vec F_1 \left( \vec U_h^J (t,x_{j+\frac12})\right) \Phi_j^{(\nu)}(x_{j+\frac12}),\quad \nu=0,\cdots,K,
\end{align}
where $\big\{ x_{j\pm \frac14}^\alpha \big\}_{\alpha=1}^Q$ denote the Gaussian nodes transformed into the
interval $\big[x_{j\pm \frac14}-\frac{\Delta x}{4}, x_{j\pm \frac14}+\frac{\Delta x}{4} \big]$,
and the associated Gaussian quadrature weights $\{ \omega_\alpha \}_{\alpha=1}^Q$
satisfy    $\omega_\alpha>0$ and  $\sum\limits_{\alpha=1}^Q \omega_\alpha =1$. For the accuracy requirement, $Q$ should satisfy
$Q\ge K+1$ for the $\mathbb{P}^K$-based DG methods \citep{CockburnIII}.

The central DG spatial discretization for $\vec U_h^J$ is very similar. If using $\left\{ \Phi_{j+\frac12}^{(\mu)}(x)\right\}_{\mu=0}^K$ to denote a local orthogonal basis of the polynomial space ${\mathbb {P}}^K(J_{j+\frac12})$,
and expressing the DG approximate solution $\vec U_h^J$  as
\begin{equation}\label{eq:expressJ}
\vec U_h^J(t,x) = \sum\limits_{\mu = 0}^{K} \vec U_{j+\frac12}^{J,(\mu)} (t)  \Phi_{j+\frac12}^{(\mu)}(x)
=:  \vec U_{j+\frac12}^J(t,x), \quad  ~x \in J_{j+\frac12},
\end{equation}
then the semi-discrete central DG discretization  on the mesh $\{I_{j+\frac12}\} $  reads
\begin{align} \nonumber
& \sum\limits_{\mu = 0}^{K} \left( \int_{J_{j+\frac12}} \Phi_{j+\frac12}^{(\mu)} (x) \Phi_{j+\frac12}^{(\nu)} (x) {\rm d}x  \right) \frac{{\rm d}\vec U_{j+\frac12}^{J,(\mu)} (t)}{{\rm d}t}
  = \frac{1}{\tau_{\max}} \int_{J_{j+\frac12} } \left( \vec U_h^I - \vec U_h^J \right)  \Phi_{j+\frac12}^{(\nu)} (x) {\rm d} x \\[2mm] \nonumber
  & \quad  + \frac{\Delta x}{2} \sum\limits_{\alpha = 1}^{Q} \omega_{\alpha} \left(
  \vec F_1 \left( \vec U_h^I (t,x_{j+\frac14}^{\alpha}) \right) \frac{{\rm d}\Phi_{j+\frac12}^{(\nu)}}{{\rm d}x} (x_{j+\frac14}^{\alpha})
  + \vec F_1 \left( \vec U_h^I (t,x_{j+\frac34}^{\alpha}) \right) \frac{{\rm d}\Phi_{j+\frac12}^{(\nu)}}{{\rm d}x} (x_{j+\frac34}^{\alpha})
   \right)\\[2mm] \label{eq:schemeJ:1D}
   & \quad + \vec F_1 \left( \vec U_h^I (t,x_{j})\right) \Phi_{j+\frac12}^{(\nu)}(x_{j})
- \vec F_1 \left( \vec U_h^I (t,x_{j+1})\right) \Phi_{j+\frac12}^{(\nu)}(x_{j+1}),\quad \nu=0,\cdots,K.
\end{align}

If taking the bases as the scaled Legendre polynomials, e.g.
\begin{align*}
&\Phi_j^{(0)}(x) = 1,\quad \Phi_j^{(1)}(x) =  \frac{x-x_j}{\Delta x},\quad \Phi_j^{(2)}(x) = 12 \left( \frac{x-x_j}{\Delta x} \right)^2 -1, \cdots,\\
&\Phi_{j+\frac12}^{(0)}(x) = 1,\quad \Phi_j^{(1)}(x) =  \frac{x-x_{j+\frac12}}{\Delta x},\quad \Phi_{j+\frac12}^{(2)}(x) = 12 \left( \frac{x-x_{j+\frac12}}{\Delta x} \right)^2 -1, \cdots,
\end{align*}
then from \eqref{eq:schemeI:1D} and \eqref{eq:schemeJ:1D} with $\nu=0$,
one may derive the  evolution equations for the cell-averages of $\vec U_h^I$ and $\vec U_h^J$
as follows
\begin{align} \nonumber
 \frac{{\rm d}\vec U_j^{I,(0)} (t)}{{\rm d}t}
&  = \frac{1}{\Delta x} \left( \frac{1}{\tau_{\max}} \int_{I_j } \left( \vec U_h^J - \vec U_h^I \right)   {\rm d} x
   + \vec F_1 \left( \vec U_h^J (t,x_{j-\frac12})\right)
- \vec F_1 \left( \vec U_h^J (t,x_{j+\frac12})\right) \right)\\
& =: {\mathscr{L}}^I_j (\vec U_h^I,\vec U_h^J),
\end{align} \label{eq:schemeI:1D:ave}
and
\begin{align} \nonumber
 \frac{{\rm d}\vec U_{j+\frac12}^{J,(0)} (t)}{{\rm d}t}
 & = \frac{1}{\Delta x}\left( \frac{1}{\tau_{\max}} \int_{J_{j+\frac12} } \left( \vec U_h^I - \vec U_h^J \right)   {\rm d} x
 + \vec F_1 \left( \vec U_h^I (t,x_{j})\right)
- \vec F_1 \left( \vec U_h^I (t,x_{j+1})\right) \right) \\
& =: {\mathscr{L}}^J_{j+\frac12} (\vec U_h^J,\vec U_h^I).
\end{align}\label{eq:schemeJ:1D:ave}

Eqs. \eqref{eq:schemeI:1D} and \eqref{eq:schemeJ:1D} constitute
a nonlinear system of ordinary differential equations for $\vec U_j^{I,(\mu)} (t)$ and
$\vec U_{j+\frac12}^{J,(\mu)} (t)$, and may be rewritten into a compact form $\vec U'(t) = {{\mathscr L}} (\vec U)$.
The strong stability preserving (SSP) Runge-Kutta methods or multi-step methods \citep{Gottlieb2009} may be  further taken for the time discretization in order to obtain the fully discrete central DG methods.
For example, the third-order accurate SSP Runge-Kutta method
\begin{align} \label{eq:RK1} \begin{aligned}
& \vec U^ *   = \vec U^n  + \Delta t \mathscr{L} (\vec U^n ), \\[2mm]
& \vec U^{ *  * }  = \frac{3}{4}\vec U^n  + \frac{1}{4}\Big(\vec U^ *
 + \Delta t \mathscr{L}  (\vec U^ *  )\Big), \\[2mm]
& \vec U^{n+1}  = \frac{1}{3} \vec U^n  + \frac{2}{3}\Big(\vec U^{ *  * }
+ \Delta t  \mathscr{L}  (\vec U^{ *  * })\Big),
\end{aligned}\end{align}
and  the third-order accurate SSP multi-step method
\begin{equation}\label{eq:multi-step}
\vec U^{n+1} = \frac{16}{27} \left( \vec U^n  +3 \Delta t {\mathscr{L}}(\vec U^n ) \right) + \frac{11}{27} \left( \vec U^{n-3}  +\frac{12}{11} \Delta t {\mathscr{L}}(\vec U^{n-3} )  \right),
\end{equation}
where $\Delta t$ denotes the time stepsize in computations.



When $K =0$,  the above central DG methods reduce to
corresponding  first-order accurate 
central schemes on overlapping cells.

\begin{thm}\label{thm:1DK=0}
If	$K=0$ and $\vec U_j^I ,\vec U_{j+\frac12}^J \in {\cal G}$ for all $j$, 	
 then under the CFL type condition
 	\begin{equation}\label{eq:CFL-K=0}
 		0< \Delta t < \frac{ \theta \Delta x} {2c},\quad \theta := \frac{\Delta t}{\tau_{\max}}\in (0,1],
 	\end{equation}
 	one has
	\begin{equation*}
		   \vec U_j^I + \Delta t {\mathscr{L}}^I_j (\vec U_h^I,\vec U_h^J)  \in {\cal G},\quad \vec U_{j+\frac12}^J + \Delta t {\mathscr{L}}^J_{j+\frac12} (\vec U_h^J,\vec U_h^I) \in {\cal G},
	\end{equation*}
	for all $j$.
\end{thm}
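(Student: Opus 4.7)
The plan is to reduce the update to a convex combination of elements of $\mathcal{G}$ and then invoke the structural properties collected in Lemmas \ref{lam:convex} and \ref{lam:propertyG}. I will treat the $\vec U_j^I$-update in detail; the $J$-update is handled by the same manipulation after swapping roles.

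First, I would write $\mathscr{L}^I_j$ explicitly for $K=0$. Since $\vec U_h^I\equiv \vec U_j^I$ on $I_j$ and $\vec U_h^J$ restricted to $I_j$ equals $\vec U_{j-\frac12}^J$ on $(x_{j-\frac12},x_j)$ and $\vec U_{j+\frac12}^J$ on $(x_j,x_{j+\frac12})$, the integral splits cleanly and the flux boundary values $\vec U_h^J(t,x_{j\pm\frac12})$ are just $\vec U_{j\pm\frac12}^J$ (these points are centers of dual cells, where $\vec U_h^J$ is single-valued). Substituting into $\mathscr{L}^I_j$ and using $\theta=\Delta t/\tau_{\max}$, the updated value takes the form
\begin{equation*}
\vec U_j^I+\Delta t\,\mathscr{L}^I_j=(1-\theta)\vec U_j^I+\frac{\theta}{2}\Bigl[\vec U_{j-\frac12}^J+\alpha^{-1}\vec F_1(\vec U_{j-\frac12}^J)\Bigr]+\frac{\theta}{2}\Bigl[\vec U_{j+\frac12}^J-\alpha^{-1}\vec F_1(\vec U_{j+\frac12}^J)\Bigr],
\end{equation*}
with the crucial identification $\alpha^{-1}=\dfrac{2\Delta t}{\theta\Delta x}$.

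Next, I would match this $\alpha$ to the CFL hypothesis. The assumption $\Delta t<\theta\Delta x/(2c)$ is exactly equivalent to $\alpha>c=1$, which is the threshold required by the Lax--Friedrichs splitting property in Lemma \ref{lam:propertyG}(iii). Applied to the given $\vec U_{j\pm\frac12}^J\in\mathcal{G}$, this yields
\begin{equation*}
\vec U_{j-\frac12}^J+\alpha^{-1}\vec F_1(\vec U_{j-\frac12}^J)\in\mathcal{G},\qquad \vec U_{j+\frac12}^J-\alpha^{-1}\vec F_1(\vec U_{j+\frac12}^J)\in\mathcal{G}.
\end{equation*}

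Finally, since the three coefficients $1-\theta,\ \theta/2,\ \theta/2$ are nonnegative, sum to one, and at least the last two are strictly positive (because $\theta\in(0,1]$), Lemma \ref{lam:convex} (convexity of $\mathcal{G}$, with the boundary-inclusive combination rule to cover the degenerate case $\theta=1$ where the first weight vanishes) gives $\vec U_j^I+\Delta t\mathscr{L}^I_j\in\mathcal{G}$. Repeating the identical derivation with $(I,J)$ interchanged, and using that $\vec U_h^I$ is single-valued at the cell centers $x_j,x_{j+1}$ of $I_j,I_{j+1}$, yields the analogous convex decomposition for the $J$-update and hence $\vec U_{j+\frac12}^J+\Delta t\mathscr{L}^J_{j+\frac12}\in\mathcal{G}$.

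The proof is essentially algebraic once the right regrouping is found, so there is no serious obstacle. The only subtle point to double-check is the precise correspondence between the CFL threshold $\Delta t<\theta\Delta x/(2c)$ and the requirement $\alpha>c$ of Lemma \ref{lam:propertyG}(iii): the factor $2$ comes from the fact that each $J$-constant occupies half of the cell $I_j$, which is also what supplies the weight $\theta/2$ in the convex combination. Verifying this bookkeeping is the main place a calculation could slip.
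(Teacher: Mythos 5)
Your proposal is correct and follows essentially the same route as the paper: rewrite the $K=0$ update as the convex combination $(1-\theta)\vec U_j^I+\tfrac{\theta}{2}\vec U_{j-\frac12}^{J,+}+\tfrac{\theta}{2}\vec U_{j+\frac12}^{J,-}$ with $\alpha=\theta\Delta x/(2\Delta t)>c$, invoke the Lax--Friedrichs splitting property of Lemma \ref{lam:propertyG}(iii), and conclude by the convexity of $\mathcal{G}$ from Lemma \ref{lam:convex}, treating the $J$-update symmetrically. Your bookkeeping of the factor $2$ and the weight $\theta/2$ matches the paper's computation exactly.
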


\begin{proof}
Because both $\vec U_j^I$ and $\vec U_{j+\frac12}^J$ are constant vectors when  $K=0$,
one has
\begin{align} \nonumber
  \vec U_j^I + \Delta t {\mathscr{L}}^I_j (\vec U_h^I,\vec U_h^J)
& = \vec U_j^I + \frac{\Delta t}{\Delta x} \left( \frac{1}{\tau_{\max}} \int_{I_j } \left( \vec U_h^J - \vec U_j^I \right)   {\rm d} x
   + \vec F_1 \left( \vec U_{j-\frac12}^J \right)
- \vec F_1 \left( \vec U_{j+\frac12}^J \right)  \right)\\[2mm]   \nonumber
& = (1-\theta) \vec U_j^I + \frac{\theta}{2}\left( \vec U_{j+\frac12}^J + \vec U_{j-\frac12}^J \right) + \frac{\Delta t}{\Delta x} \left(
 \vec F_1 \left( \vec U_{j-\frac12}^J \right)
- \vec F_1 \left( \vec U_{j+\frac12}^J \right)  \right)\\[2mm]
& = (1-\theta) \vec U_j^I + \frac{\theta}{2} \vec U_{j+\frac12}^{J,-} + \frac{\theta}{2} \vec U_{j-\frac12}^{J,+}, \label{eq:convex:1D:K=0}
\end{align}
where
$$
\vec U_{j\pm\frac12}^{J,\mp} :=  \vec U_{j\pm\frac12}^J  \mp \left( \frac{\theta \Delta x}{2 \Delta t} \right)^{-1} \vec F_1 \left( \vec U_{j\pm\frac12}^J \right).
$$
Thanks to the Lax-Friedrichs splitting property in Lemma \ref{lam:propertyG},
 $\vec U_{j\pm\frac12}^{J,\mp} \in {\cal G}$  under the theorem hypothesis.
Combing those with  \eqref{eq:convex:1D:K=0}  and using
 the convexity of $\cal G$
 further yields $ \vec U_j^I + \Delta t {\mathscr{L}}^I_j (\vec U_h^I,\vec U_h^J) \in {\cal G}$.
  Similar arguments may show $\vec U_{j+\frac12}^J + \Delta t {\mathscr{L}}^J_{j+\frac12} (\vec U_h^J,\vec U_h^I) \in {\cal G}$. The proof is completed.
\end{proof}

Theorem \ref{thm:1DK=0} indicates that the first-order accurate ($K = 0$) central DG methods are
 PCP under the CFL type condition \eqref{eq:CFL-K=0} if the forward Euler method is used for time discretization.


When $K \ge 1$, the high-order accurate central DG methods may
work well for the 1D RHD problems whose solutions are either
smooth or contain weak discontinuities and do not involve low density or pressure and large Lorentz factor.
However, if the solution contains strong discontinuity,
the  high-order accurate central DG methods will generate significant spurious  oscillations
and even nonlinear instability.
Therefor, it is necessary to use some nonlinear limiter
to suppress or control possible spurious  oscillations.
Up to now, there exist some nonlinear limiters for the DG methods in the literature,
e.g.  the minmod-type limiter \citep{CockburnII},   moment-based limiter \citep{Biswas}, WENO limiter \citep{QiuWENOlimiter,Zhu2008,ZhaoTang2013,Zhao}, and so on.
Although those nonlinear limiters may effectively suppress spurious oscillations, they
cannot make the high-order accurate central DG methods become PCP
  in general.
To overcome such difficulty,
the positivity-preserving limiters  \citep{zhang2010b,ChengLiQiuXu} will
be extended to our central DG methods for the RHD equations:
 consider the scheme preserving the cell-averages  $\vec U^{I,(0)}_j(t)$ and $\vec U^{J,(0)}_{j+\frac12}(t)$ in $\cal G$, and then use those cell-averages
 to limit the polynomial vector $\vec U^{I}_j(t,x)$
 (resp. $\vec U^{J}_{j+\frac12}(t,x)$) as $\tilde{\vec U}^{I}_j(t,x)$
(resp. $\tilde{\vec U}^{J}_{j+\frac12}(t,x)$)
such that the values of $\tilde{\vec U}^{I}_j(t,x)$
(resp. $\tilde{\vec U}^{J}_{j+\frac12}(t,x)$)  at some critical
points in the cell $I_j$ (resp. $J_{j+\frac12}$) belong to $\cal G$.

Before presenting the  positivity-preserving limiter,
the PCP conditions for the 1D high-order accurate central DG methods is first studied.
For the sake of convenience, the independent variable $t$ will be temporarily omitted.
Let $\big\{ {\hat x}_{j\pm \frac14}^\alpha \big\}_{\alpha=1}^L$ be the Gauss-Lobatto nodes transformed into the
interval $\big[x_{j\pm \frac14}-\frac{\Delta x}{4}, x_{j\pm \frac14}+\frac{\Delta x}{4} \big]$, and $\{ {\hat{\omega}}_\alpha \}_{\alpha=1}^L$ be the associated Gaussian quadrature weights satisfying
${\hat{ \omega}}_\alpha>0$ and $\sum\limits_{\alpha=1}^L {\hat{ \omega}}_\alpha =1$,
where  $L$ is larger than  $(K+3)/2$
in order to ensure that the algebraic precision of corresponding quadrature rule  is at least $K$.

\begin{thm} \label{thm:PCP:1DRHD}
If 
${\vec U}^I_j(\hat x_{j\pm\frac14}^\alpha)\in {\cal G}$ and ${\vec U}^J_{j+\frac12}(\hat x_{j+\frac12 \pm\frac14}^\alpha)\in {\cal G}$ for all $j$ and $\alpha=1,2,\cdots,L$, then under the CFL type condition
 	\begin{equation}\label{eq:CFL-Kge1}
 		0< \Delta t \le \frac{ {\hat{\omega}}_1 \theta \Delta x} {2c},\quad \theta \in (0,1],
 	\end{equation}
 	one has
	\begin{equation*}
		   \vec U_j^{I,(0)} + \Delta t {\mathscr{L}}^I_j (\vec U_h^I,\vec U_h^J)  \in {\cal G},\quad \vec U_{j+\frac12}^{J,(0)} + \Delta t {\mathscr{L}}^J_{j+\frac12} (\vec U_h^J,\vec U_h^I) \in {\cal G},
	\end{equation*}
	for all $j$.
\end{thm}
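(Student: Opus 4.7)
The plan is to mimic the $K=0$ argument of Theorem \ref{thm:1DK=0}: express $\vec U_j^{I,(0)}+\Delta t\,\mathscr{L}^I_j(\vec U_h^I,\vec U_h^J)$ as a convex combination of states lying in $\mathcal{G}\cup\partial\mathcal{G}$ (with at least one strictly inside $\mathcal G$), then apply the convexity of the admissible set (Lemma \ref{lam:convex}).

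With $\theta=\Delta t/\tau_{\max}\in(0,1]$, the dissipation piece contributes $-\theta\vec U_j^{I,(0)}$, so
\[
\vec U_j^{I,(0)}+\Delta t\,\mathscr{L}^I_j = (1-\theta)\vec U_j^{I,(0)} + \frac{\theta}{\Delta x}\int_{I_j}\vec U_h^J\,{\rm d}x + \frac{\Delta t}{\Delta x}\Big[\vec F_1\bigl(\vec U^J_{j-\frac12}(x_{j-\frac12})\bigr) - \vec F_1\bigl(\vec U^J_{j+\frac12}(x_{j+\frac12})\bigr)\Big].
\]
Since $\vec U_h^J$ is discontinuous at $x_j$, I would split $\int_{I_j}\vec U_h^J\,{\rm d}x$ into the two half-cells $[x_{j-\frac12},x_j]$ and $[x_j,x_{j+\frac12}]$ and apply the $L$-point Gauss-Lobatto rule on each half, which is exact because $2L-3\ge K$. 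The crucial point is that Gauss-Lobatto includes the endpoints, with equal boundary weights $\hat\omega_1=\hat\omega_L$; the endpoint contributions $\tfrac{\theta\hat\omega_1}{2}\vec U^J_{j\mp\frac12}(x_{j\mp\frac12})$ pair with the surface-flux terms to form Lax-Friedrichs combinations
\[
\vec U^J_{j\mp\frac12}(x_{j\mp\frac12})\pm\alpha^{-1}\vec F_1\bigl(\vec U^J_{j\mp\frac12}(x_{j\mp\frac12})\bigr),\qquad \alpha:=\frac{\hat\omega_1\,\theta\,\Delta x}{2\Delta t}.
\]
The CFL condition \eqref{eq:CFL-Kge1} gives $\alpha\ge c$, so these two states lie in $\mathcal{G}\cup\partial\mathcal{G}$ by Lemma \ref{lam:propertyG}(iii). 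All remaining Gauss-Lobatto nodal values of $\vec U_h^J$ (the interior nodes and the two one-sided limits at $x_j$) are in $\mathcal{G}$ by hypothesis. Finally, $\vec U_j^{I,(0)}$ itself is rewritten as a Gauss-Lobatto convex combination of $\vec U_j^I(\hat x^\alpha_{j\pm\frac14})\in\mathcal G$, so $\vec U_j^{I,(0)}\in\mathcal G$.

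It remains to verify that the positive coefficients in this decomposition sum to $1$---a short bookkeeping that follows from $\sum_\alpha\hat\omega_\alpha=1$ on each half-interval---and then invoke Lemma \ref{lam:convex}. The dual update $\vec U_{j+\frac12}^{J,(0)}+\Delta t\,\mathscr{L}^J_{j+\frac12}$ is handled by the same argument with the roles of $\vec U_h^I$ and $\vec U_h^J$ interchanged. The main obstacle is the precise algebraic pairing that isolates the Gauss-Lobatto endpoint weight $\hat\omega_1$ inside the scaling $\alpha$; once that identification produces the sharp CFL bound $\Delta t\le\hat\omega_1\theta\Delta x/(2c)$ in \eqref{eq:CFL-Kge1}, everything else reduces to the Lax-Friedrichs splitting in Lemma \ref{lam:propertyG}(iii) combined with the convexity in Lemma \ref{lam:convex}.
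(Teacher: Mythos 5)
Your proposal is correct and follows essentially the same route as the paper's proof: split $\frac{1}{\Delta x}\int_{I_j}\vec U_h^J\,{\rm d}x$ into the two half-cells, apply the exact $L$-point Gauss--Lobatto rule so the endpoint values at $x_{j\pm\frac12}$ (with weight $\hat\omega_1$) pair with the surface fluxes into Lax--Friedrichs states $\vec U^J_{j\pm\frac12}(x_{j\pm\frac12})\mp\alpha^{-1}\vec F_1(\cdot)$ with $\alpha=\hat\omega_1\theta\Delta x/(2\Delta t)\ge c$, and conclude by Lemma \ref{lam:propertyG}(iii) and the convexity of ${\cal G}$, exactly as in the paper's convex-combination identity \eqref{eq:convex:1D:Kge1}. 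The remaining bookkeeping you defer (coefficients $(1-\theta)+(1-\hat\omega_1)\theta+\tfrac{\hat\omega_1\theta}{2}+\tfrac{\hat\omega_1\theta}{2}=1$) is immediate, so no gap remains.
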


\begin{proof}
Using the convexity of $\cal G$ and the exactness of the Gauss-Lobatto quadrature rule with $L$ nodes for the polynomials of degree $K$ yields
$$
\vec U_j^{I,(0)} = \frac{1}{\Delta x} \int_{I_j} \vec U_h^I dx
= \frac{1}{2} \sum \limits_{\alpha=1}^L \hat \omega_\alpha  \Big( \vec U_j^I (\hat x_{j-\frac14}^\alpha ) + \vec U_j^I (\hat x_{j+\frac14}^\alpha ) \Big) \in {\cal G},
$$
and
\begin{align*}
& \frac{1}{\Delta x} \int_{I_j} \vec U_h^J dx
= \frac{1}{2} \sum \limits_{\alpha=1}^L \hat \omega_\alpha  \Big( \vec U_{j-\frac12}^J (\hat x_{j-\frac14}^\alpha ) + \vec U_{j+\frac12}^J (\hat x_{j+\frac14}^\alpha ) \Big) \\
 & = \frac{\hat \omega_1}{2} \vec U_{j-\frac12}^J ( x_{j-\frac12} )
+ \frac{\hat \omega_L}{2} \vec U_{j+\frac12}^J ( x_{j+\frac12} )
+ \frac{1}{2} \sum \limits_{\alpha=2}^L \hat \omega_\alpha   \vec U_{j-\frac12}^J (\hat x_{j-\frac14}^\alpha )
+  \frac{1}{2} \sum \limits_{\alpha=1}^{L-1} \hat \omega_\alpha   \vec U_{j+\frac12}^J (\hat x_{j+\frac14}^\alpha )\\
&= \frac{\hat \omega_1}{2} \vec U_{j-\frac12}^J ( x_{j-\frac12} )
+ \frac{\hat \omega_1}{2} \vec U_{j+\frac12}^J ( x_{j+\frac12} ) + \left(1- \hat \omega_1 \right) \Xi  ,
\end{align*}
with
$$
\Xi := \frac{1}{2(1-\hat \omega_1 )} \left(  \sum \limits_{\alpha=2}^L \hat \omega_\alpha   \vec U_{j-\frac12}^J (\hat x_{j-\frac14}^\alpha )  + \sum \limits_{\alpha=1}^{L-1} \hat \omega_\alpha   \vec U_{j+\frac12}^J (\hat x_{j+\frac14}^\alpha ) \right) \in {\cal G},
$$
where $\hat \omega_1 = \hat \omega_L \le \frac12$.
Thus one has
\begin{align} \nonumber
 \vec U_j^{I,(0)} +  \Delta t {\mathscr{L}}^I_j (\vec U_h^I,\vec U_h^J)
& =  \vec U_j^{I,(0)}  +  \frac{\theta}{\Delta x}\nonumber\int_{I_j } \left( \vec U_h^J - \vec U_j^I \right)   {\rm d} x
\\ \nonumber
& \quad + \frac{\Delta t}{\Delta x} \left(
\vec F_1 \left( \vec U_{j-\frac12}^J ( x_{j-\frac12} )   \right)
- \vec F_1 \left( \vec U_{j+\frac12}^J ( x_{j+\frac12} )  \right)  \right)\\[2mm]   \nonumber
& = (1-\theta) \vec U_j^{I,(0)}
+ \theta \left( \frac{\hat \omega_1}{2} \vec U_{j-\frac12}^J ( x_{j-\frac12} ) + \frac{\hat \omega_1}{2} \vec U_{j+\frac12}^J ( x_{j+\frac12} ) + \left(1- \hat \omega_1 \right) \Xi  \right) \\[2mm]\nonumber
&\quad+ \frac{\Delta t}{\Delta x} \left(
\vec F_1 \left( \vec U_{j-\frac12}^J ( x_{j-\frac12} )   \right)
- \vec F_1 \left( \vec U_{j+\frac12}^J ( x_{j+\frac12} )  \right)  \right)
\\[2mm]
& = (1-\theta) \vec U_j^{I,(0)}  + \left(1- \hat \omega_1 \right) \theta \Xi +
\frac{\hat \omega_1 \theta}{2} \vec U_{j+\frac12}^{J,-} + \frac{\hat \omega_1 \theta}{2} \vec U_{j-\frac12}^{J,+}, \label{eq:convex:1D:Kge1}
\end{align}
where
$$
\vec U_{j\pm\frac12}^{J,\mp} :=  \vec U_{j\pm\frac12}^J ( x_{j \pm \frac12} )   \mp \left( \frac{ \hat \omega_1 \theta \Delta x}{2 \Delta t} \right)^{-1} \vec F_1 \left( \vec U_{j\pm\frac12}^J ( x_{j \pm \frac12} )  \right) \in {\cal G} \cup \partial {\cal G},
$$
due to  the Lax-Friedrichs splitting property in Lemma \ref{lam:propertyG}
and the theorem hypothesis. 
Using   \eqref{eq:convex:1D:Kge1}  and the convexity of $\cal G$
may further  yield $ \vec U_j^{I,(0)} + \Delta t {\mathscr{L}}^I_j (\vec U_h^I,\vec U_h^J) \in {\cal G}$.
Similar arguments may show
 $\vec U_{j+\frac12}^{J,(0)} + \Delta t {\mathscr{L}}^J_{j+\frac12} (\vec U_h^J,\vec U_h^I) \in {\cal G}$. The proof is completed.
\end{proof}

Theorem \ref{thm:PCP:1DRHD} gives a sufficient condition for the 1D central DG methods
which preserve the cell-averages  $\vec U_j^{I,(0)}$ and $\vec U_{j+\frac12}^{J,(0)}$ in $\cal G$
 when the forward Euler method is used for the time discretization.
 Since a high-order accurate SSP time discretization may be considered as
a convex combination of the forward Euler method,
Theorem \ref{thm:PCP:1DRHD} is valid for the high-order accurate SSP time discretization.

Let us present the PCP limiting procedure, which
limits ${\vec U}^I_j(x)$ and ${\vec U}^J_{j+\frac12}(x)$
as $\tilde{\vec U}^I_j(x)$ and $\tilde{\vec U}^J_{j+\frac12}(x)$
satisfying two requirements: (i) $\tilde{\vec U}^I_j(\hat x_{j\pm\frac14}^\alpha)\in {\cal G}$ and $\tilde{\vec U}^J_{j+\frac12}(\hat x_{j+\frac12 \pm\frac14}^\alpha)\in {\cal G}$ for $\alpha=1,2,\cdots,L$,
and  (ii) $\tilde{\vec U}^I_j( x_{j\pm\frac14}^\alpha)\in {\cal G}$ and $\tilde{\vec U}^J_{j+\frac12}( x_{j+\frac12 \pm\frac14}^\alpha)\in {\cal G}$ for $\alpha=1,2,\cdots,Q$.
The second requirement   does not appear in the non-relativistic case and is used to
ensure getting a physical solution of the pressure equation  \eqref{eq:solvePgEOS} by root-finding method
and  the successive calculations of $\vec F_1 \left( {\vec U}^I_j(x_{j\pm\frac14}^\alpha) \right)$ and
$\vec F_1 \left( {\vec U}^J_{j+\frac12}( x_{j+\frac12 \pm\frac14}^\alpha) \right)$ in \eqref{eq:schemeI:1D} and \eqref{eq:schemeJ:1D}.
%
%
 %
%
%
%
%
Because the PCP limiting procedures for   ${\vec U}^I_j(x)$ and ${\vec U}^J_{j+\frac12}(x)$
are the same and implemented  separately,
only the PCP limiter for ${\vec U}^I_j(x)$ is presented here.
Let ${\vec U}^I_j(x)=:\left( D_j(x),{\vec m}_j(x), E_j(x)\right)^{\rm T}$,
assume that $\vec U_j^{I,(0)}=:\left( \overline D_j, \overline{\vec m}_j, \overline E_j\right)^{\rm T}\in {\cal G}$,   
and introduce a sufficiently small positive number $\epsilon$
(taken as $10^{-13}$ in numerical computations)
such that $ \vec U_j^{I,(0)}\in {\cal G}_\epsilon$, where
\begin{align*} 
{\cal G}_\epsilon = \left\{   \vec U=(D,\vec m,E)^{\rm T} \big|  D\ge\epsilon,~q(\vec U)\ge\epsilon\right\}.
\end{align*}
Obviously, ${\cal G}_\epsilon \subset {\cal G}_0$ and $\mathop {\lim }\limits_{\epsilon  \to 0^ +  }  {\cal G}_\epsilon = {\cal G}_0$.

 The {\tt 1D PCP limiting procedure} is divided into the following two steps.

\noindent
{\bf Step (i)}: Enforce the positivity of $D(\vec U)$. Let $D_{\min} = \min \limits_{x \in {\mathcal S}_j}^{} D_j ( x )$,
where
$${\mathcal S}_j:=\left\{\hat{x}_{j-\frac14}^\alpha\right\}_{\alpha=1}^L  \bigcup \left\{\hat{x}_{j+\frac14}^\alpha\right\}_{\alpha=1}^L \bigcup \left\{x_{j-\frac14}^\alpha\right\}_{\alpha=1}^Q  \bigcup \left\{{x}_{j+\frac14}^\alpha\right\}_{\alpha=1}^Q .$$
If $D_{\min} < \epsilon$,
then  $D_j ( x )$ is limited as
$$
\hat D_j(x) = \theta_1 \big( D_j(x) - \overline D_j \big) + \overline D_j,
$$
where $\theta_1 = (\overline D_j - \epsilon)/ ( \overline D_j - D_{\min} ) <1$. Otherwise, take $\hat D_j(x) =  D_j(x)$ and $\theta_1=1$.
Denote $\hat {\vec U}_j(x) :=  \left( \hat D_j(x), \vec m_j(x), E_j(x) \right)^{\rm T}$.

\noindent
{\bf Step (ii)}: Enforce the positivity of $q(\vec U)$. Let
$q_{\min} = \min \limits_{x \in {\mathcal S}_j}^{} q(\hat {\vec U}_j ( x ))$. If $q_{\min} < \epsilon$, then  $\hat {\vec U}_j ( x )$ is limited as
$$
\tilde {\vec U}^I_j(x) = \theta_2 \big( \hat {\vec U}_j (x) -  {\vec U}_j^{I,(0)} \big) + {\vec U}_j^{I,(0)},
$$
where $\theta_2 = \left( q \big(  {\vec U}_j^{I,(0)} \big) - \epsilon \right)/ \left( q \big(  {\vec U}_j^{I,(0)} \big) - q_{\min} \right) <1$. Otherwise, set $\tilde {\vec U}^I_j(x) =  \hat {\vec U}_j(x)$ and $\theta_2=1$.

\begin{lemma}\label{lam:limiter}
If $ {\vec U}_j^{I,(0)} \in {\cal G}_\epsilon$, then
  $\tilde {\vec U}_j^I(x)$ given by the above PCP limiting procedure belongs to $ {\cal G}_\epsilon$ for all $x \in {\mathcal S}_j$.
\end{lemma}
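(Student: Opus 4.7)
The plan is to verify the two constraints $D(\tilde{\vec U}_j^I(x))\ge \epsilon$ and $q(\tilde{\vec U}_j^I(x))\ge \epsilon$ separately, following the two steps of the limiting procedure in order, and leveraging the concavity of $q$ established in Lemma \ref{lam:convex}.

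First I would analyze Step (i). The limited density is the convex combination
\[
\hat D_j(x) = \theta_1 D_j(x) + (1-\theta_1)\overline D_j,
\]
with $\theta_1\in[0,1]$ chosen so that at the minimizer $x_\ast\in\mathcal S_j$ of $D_j$ one has $\hat D_j(x_\ast)=\epsilon$ (when $D_{\min}<\epsilon$; otherwise $\theta_1=1$ and $\hat D_j\equiv D_j\ge\epsilon$). For any other $x\in\mathcal S_j$ we have $D_j(x)\ge D_{\min}$ and $\overline D_j\ge \epsilon$, so monotonicity of convex combinations gives $\hat D_j(x)\ge \hat D_j(x_\ast)=\epsilon$. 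This takes care of the density constraint for $\hat{\vec U}_j$.

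Next I would examine Step (ii), which is the main step because it involves the nonlinear functional $q$. Write
\[
\tilde{\vec U}^I_j(x) = \theta_2 \hat{\vec U}_j(x) + (1-\theta_2)\vec U_j^{I,(0)},
\]
with $\theta_2\in[0,1]$. For the $D$-component the argument of Step (i) applies again: $D(\tilde{\vec U}^I_j(x)) = \theta_2 \hat D_j(x) + (1-\theta_2)\overline D_j\ge \epsilon$ since both $\hat D_j(x)\ge\epsilon$ and $\overline D_j\ge\epsilon$. For the $q$-component I would invoke the concavity of $q$ from Lemma \ref{lam:convex} to obtain
\[
q\bigl(\tilde{\vec U}^I_j(x)\bigr) \ge \theta_2\, q(\hat{\vec U}_j(x)) + (1-\theta_2)\, q(\vec U_j^{I,(0)}) \ge \theta_2 q_{\min} + (1-\theta_2)q(\vec U_j^{I,(0)}).
\]
When $q_{\min}<\epsilon$, the definition of $\theta_2$ makes the last expression equal to $\epsilon$; when $q_{\min}\ge\epsilon$ we have $\theta_2=1$ and the inequality already gives $q\ge q_{\min}\ge\epsilon$. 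In either case $q(\tilde{\vec U}^I_j(x))\ge\epsilon$.

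The main obstacle, as I see it, is not a computation but ensuring that Step (ii) does not ruin what Step (i) achieved. A naive bound on $D(\tilde{\vec U}^I_j)$ using $q$'s concavity would be of no help since $D$ is a linear functional; the simple observation that a convex combination of two quantities that are both $\ge\epsilon$ stays $\ge\epsilon$ is what closes the argument cleanly. With both $D\ge\epsilon$ and $q\ge\epsilon$ verified on every point of $\mathcal S_j$, we conclude $\tilde{\vec U}^I_j(x)\in\mathcal G_\epsilon$, as claimed.
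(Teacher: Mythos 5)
Your proof is correct and follows essentially the same route as the paper's: bound the density through both limiting steps by convexity of the combinations (using $\overline D_j\ge\epsilon$), and handle the $q$-constraint via the concavity of $q$ from Lemma \ref{lam:convex} together with the definition of $\theta_2$. No gaps; the case splits ($D_{\min}\ge\epsilon$, $q_{\min}\ge\epsilon$) are treated just as in the paper.
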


\begin{proof}
For any $x\in {\mathcal S}_j$,  
it is obvious that
$\hat D_j (x)= D_j (x)\geq D_{\min} >\epsilon$ if $D_{\min} >\epsilon$.
 If $D_{\min} >\epsilon$, then one has
$$\hat D_j (x)= \theta_1 \big( D_j(x) - \overline D_j \big) + \overline D_j
\geq  \theta_1 \big( D_{\min} - \overline D_j \big) + \overline D_j=\epsilon.
$$
Thanks to $\theta_2 \in[0,1]$,  one yields
\begin{align*}
 \tilde D_j^I(x) =  \theta_2 \left( \hat D_j( x) - \overline D_j \right) +  \overline D_j  \ge \theta_2 \left( \epsilon - \overline D_j \right) +  \overline D_j \ge \epsilon.
\end{align*}

Similarly, if $q_{\min} \ge \epsilon$,
 then it is evident that $q\big( \tilde { \vec U}^I_j(x) \big) = q\big( \hat { \vec U}_j(x) \big) \ge q_{\min} \ge \epsilon  $ for any $x\in {\mathcal S}_j$. Otherwise, using the concavity of $q(\vec U)$ gives
\begin{align*}
q\big( \tilde { \vec U}_j^I (x) \big) & = q\big(  \theta_2  \hat {\vec U}_j (x) + (1-\theta_2)  {\vec U}_j^{I,(0)} \big)
 \ge  \theta_2 q\big(  \hat {\vec U}_j (x) \big) + (1-\theta_2) q \big(  {\vec U}_j^{I,(0)} \big)\\
 & \ge \theta_2 q_{\min} + (1-\theta_2) q \big(  {\vec U}_j^{I,(0)} \big) = \epsilon.
\end{align*}
The proof is completed.

\end{proof}

The above PCP limiting procedure preserves the conservation in the  sense that
$$
 {\vec U}_j^{I,(0)} = \frac{1}{\Delta x} \int_{I_j} {\vec U}^I_j( x)  dx = \frac{1}{\Delta x} \int_{I_j} \hat{\vec U}_j( x)  dx
= \frac{1}{\Delta x} \int_{I_j} \tilde{\vec U}^I_j( x)  dx,
$$
and maintains the high-order accuracy for smooth solutions, similar to the discussion at the end of Section 2.2 of \citep{zhang2010b}.
If replacing the solution polynomials  ${\vec U}^I_j(x)$ and ${\vec U}^J_{j+\frac12}(x)$
of high-order accurate central DG methods
with the limited polynomials $\tilde{\vec U}^I_j(x)$ and $\tilde{\vec U}^J_{j+\frac12}(x)$  at each stage of SSP Runge-Kutta method \eqref{eq:RK1} or each step of SSP muti-step method \eqref{eq:multi-step},
then the resulting fully discrete central DG methods are PCP under some CFL type conditions.

\begin{thm}\label{thm:1D}
If  the high-order accurate central DG solution polynomials are revised as the above limited polynomials at each stage of SSP Runge-Kutta method \eqref{eq:RK1} or each step of SSP muti-step method \eqref{eq:multi-step},
then
(i) the resulting Runge-Kutta central DG methods are PCP under the CFL type condition \eqref{eq:CFL-Kge1},
(ii) 
the resulting multi-step central DG methods are PCP under the CFL type condition
 	\begin{equation}\label{eq:CFL-Kge1MS}
 		0< \Delta t \le \frac{ {\hat{\omega}}_1 \theta \Delta x} {2c},\quad \theta\in \left(0,\frac13 \right].
 	\end{equation}
\end{thm}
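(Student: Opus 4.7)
The plan is to reduce everything to Theorem \ref{thm:PCP:1DRHD} by interpreting each SSP stage/step as a convex combination of forward-Euler updates and then invoking the convexity of $\mathcal{G}$ established in Lemma \ref{lam:convex}. First I would observe that, by construction and by Lemma \ref{lam:limiter}, the PCP-limited polynomials $\tilde{\vec U}^I_j(x)$ and $\tilde{\vec U}^J_{j+\frac12}(x)$ (i) retain the cell averages $\vec U_j^{I,(0)},\vec U_{j+\frac12}^{J,(0)}$ that enter $\mathscr{L}^I_j,\mathscr{L}^J_{j+\frac12}$ and (ii) satisfy $\tilde{\vec U}^I_j(x),\tilde{\vec U}^J_{j+\frac12}(x)\in\mathcal G$ at every Gauss--Lobatto node $\hat x^\alpha_{j\pm\frac14}$ \emph{and} every Gaussian node $x^\alpha_{j\pm\frac14}$ used in \eqref{eq:schemeI:1D}--\eqref{eq:schemeJ:1D}. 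The latter guarantees that the flux evaluations inside $\mathscr{L}^I_j,\mathscr{L}^J_{j+\frac12}$ are well defined (the pressure equation \eqref{eq:solvePgEOS} can be solved), so the hypotheses of Theorem \ref{thm:PCP:1DRHD} are met for the limited polynomials.

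For part (i), I would write each stage of \eqref{eq:RK1} as a convex combination of forward-Euler updates applied to the limited polynomials: e.g.\ $\vec U^{**}=\tfrac34 \vec U^n+\tfrac14(\tilde{\vec U}^*+\Delta t \mathscr{L}(\tilde{\vec U}^*))$, and similarly for $\vec U^{n+1}$. Since $\vec U^n\in\mathcal G$ by induction and the limiter output has averages in $\mathcal G$, Theorem \ref{thm:PCP:1DRHD} under the CFL condition \eqref{eq:CFL-Kge1} places each forward-Euler building block in $\mathcal G$; convexity of $\mathcal G$ then propagates the membership through each convex combination. The conclusion is PCP of the cell averages at time level $n+1$, after which the limiter at the next step guarantees the pointwise hypotheses needed to repeat the argument.

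For part (ii), the third-order SSP multi-step scheme \eqref{eq:multi-step} has to be rewritten as
\[
\vec U^{n+1}=\tfrac{16}{27}\bigl(\tilde{\vec U}^n+3\Delta t\,\mathscr{L}(\tilde{\vec U}^n)\bigr)+\tfrac{11}{27}\bigl(\tilde{\vec U}^{n-3}+\tfrac{12}{11}\Delta t\,\mathscr{L}(\tilde{\vec U}^{n-3})\bigr),
\]
which is a convex combination of two forward-Euler-like updates with effective time steps $3\Delta t$ and $\tfrac{12}{11}\Delta t$. To apply Theorem \ref{thm:PCP:1DRHD} to each block, the effective ratio $\theta_{\text{eff}}=\Delta t_{\text{eff}}/\tau_{\max}$ must satisfy $\theta_{\text{eff}}\in(0,1]$ and the effective CFL $\Delta t_{\text{eff}}\le\hat\omega_1\theta_{\text{eff}}\Delta x/(2c)$. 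The binding constraint is the larger $3\Delta t$; with $\theta=\Delta t/\tau_{\max}$, the requirement $3\theta\le1$, i.e.\ $\theta\le\tfrac13$, together with $3\Delta t\le\hat\omega_1(3\theta)\Delta x/(2c)$ is exactly \eqref{eq:CFL-Kge1MS}. The second block then satisfies the CFL automatically since $\tfrac{12}{11}<3$. Convexity of $\mathcal G$ finishes the argument in the same way as in (i).

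The main technical subtlety, and the place where I would focus the write-up, is the multi-step bookkeeping: one must verify that the decomposition of \eqref{eq:multi-step} really is a \emph{convex} combination (the weights $\tfrac{16}{27}$ and $\tfrac{11}{27}$ are positive and sum to one) and that both effective forward-Euler building blocks individually satisfy the CFL of Theorem \ref{thm:PCP:1DRHD} under the single reduced condition \eqref{eq:CFL-Kge1MS}. Everything else — preservation of cell averages by the limiter, concavity/convexity properties of $\mathcal G$, and validity of flux evaluation at the quadrature nodes — has already been packaged in Lemmas \ref{lam:convex}, \ref{lam:propertyG}, \ref{lam:limiter} and Theorem \ref{thm:PCP:1DRHD}, so the proof should be short once the SSP decompositions are written out.
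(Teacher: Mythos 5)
Your proposal is correct and follows essentially the same route the paper intends: the paper gives no separate proof of Theorem \ref{thm:1D}, relying instead on the remark after Theorem \ref{thm:PCP:1DRHD} that an SSP time discretization is a convex combination of forward Euler steps, combined with Lemma \ref{lam:limiter}, conservation of cell averages under the limiter, and convexity of $\mathcal G$. Your bookkeeping for the multi-step case (effective steps $3\Delta t$ and $\tfrac{12}{11}\Delta t$, so the binding requirement $3\theta\le 1$ yields exactly \eqref{eq:CFL-Kge1MS} while the second block is automatic) is precisely the intended justification of the reduced condition $\theta\in(0,\tfrac13]$.
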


Similar to   \citep{wang2012,ChengLiQiuXu,QinShu2016}, one may yield
 the $L^1$-stability of the proposed PCP central DG methods.

\begin{thm}\label{thm:stability}
Under the vanishing, reflective or periodic boundary conditions,
the PCP central DG methods are the $L^1$-stable in the sense that
$$
\left\| \tilde{\vec U}_h^{I} (t_n,x) \right\|_{L^1} + \left\| \tilde{ \vec U}_h^{J} (t_n,x) \right\|_{L^1}
<
2\left( \left\| \vec U_h^{I} (0,x) \right\|_{L^1} + \left\| \vec U_h^{J} (0,x) \right\|_{L^1} \right),
$$
where
$$
\left\|\vec w^I \right\|_{L^1} := \frac {\Delta x} {2} \sum \limits_{j} \sum \limits_{\alpha=1}^L \hat {\omega}_\alpha \left( \left\| \vec w_j^I(\hat x^\alpha_{j-\frac14}) \right\|_{l^1} + \left\| \vec w_j^I(\hat x^\alpha_{j+\frac14}) \right\|_{l^1} \right) \approx \int_{\Omega} \left\|\vec w^I(x) \right\|_{l^1} {\rm d}x,
$$
and
$$
\left\|\vec w^J \right\|_{L^1} :=  \frac {\Delta x} {2} \sum \limits_{j} \sum \limits_{\alpha=1}^L  \hat {\omega}_\alpha \left( \left\| \vec w_{j+\frac12}^J(\hat x^\alpha_{j+\frac14}) \right\|_{l^1} + \left\| \vec w_{j+\frac12}^J(\hat x^\alpha_{j+\frac34}) \right\|_{l^1} \right) \approx \int_{\Omega} \left\|\vec w^J(x) \right\|_{l^1} {\rm d}x.
$$
\end{thm}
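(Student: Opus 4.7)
The plan is to combine the PCP property (which guarantees $\tilde{\vec U}_h^{I,J}\in\mathcal{G}$ at every Gauss--Lobatto quadrature node) with a conservation identity across the two dual meshes to bound the evolving discrete $L^1$ norm by twice the initial one.

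First I would use Lemma \ref{lemma:ne} to convert the admissibility $\tilde{\vec U}\in\mathcal{G}$ into pointwise componentwise inequalities: at each quadrature node one has $\tilde D>0$, $\tilde E>0$, and $\tilde E>\sqrt{\tilde D^2+\tilde m_1^2}\ge|\tilde m_1|$, whence
\begin{equation*}
\|\tilde{\vec U}(x)\|_{l^1}=\tilde D+|\tilde m_1|+\tilde E<2(\tilde D+\tilde E).
\end{equation*}
Substituting this into the definitions of $\|\tilde{\vec U}_h^I\|_{L^1}$ and $\|\tilde{\vec U}_h^J\|_{L^1}$, and invoking the exactness of the $L$-node Gauss--Lobatto rule (valid up to degree $2L-3\ge K$, since $L\ge(K+3)/2$) for the polynomial pieces $\tilde D$ and $\tilde E$, each inner quadrature sum collapses to the cell integral $\Delta x\,(\tilde D^{(0)}+\tilde E^{(0)})$. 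Since the PCP limiter is conservative, these post-limiter cell averages coincide with the pre-limiter ones $D^{(0)}$ and $E^{(0)}$.

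Next I would show that $\sum_j\Delta x\,(\vec U_j^{I,(0)}+\vec U_{j+\frac12}^{J,(0)})$ is a global invariant of the fully discrete scheme. Summing the cell-average evolutions \eqref{eq:schemeI:1D:ave} and \eqref{eq:schemeJ:1D:ave} over $j$, the surface flux terms telescope to zero under the three admissible boundary types, while the numerical-dissipation integrals $\tau_{\max}^{-1}\int(\vec U_h^J-\vec U_h^I)$ on $\{I_j\}$ and $\tau_{\max}^{-1}\int(\vec U_h^I-\vec U_h^J)$ on $\{J_{j+\frac12}\}$ reassemble to integrals over $\Omega$ that are exact negatives of one another and therefore cancel. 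This invariant is preserved by each forward-Euler substage and, by the SSP convex-combination structure of \eqref{eq:RK1} and \eqref{eq:multi-step}, through every full time step. Tracing back to $t=0$ and using Gauss--Lobatto exactness together with the positivity of $D,E$ in the initial data gives
\begin{equation*}
\Delta x\sum_j(D_j^{I,(0)}+E_j^{I,(0)})(0)=\sum_j\int_{I_j}(D_h^I+E_h^I)(0,x)\,dx\le\|\vec U_h^I(0,x)\|_{L^1},
\end{equation*}
together with its $J$-mesh analogue, and assembling all the pieces delivers the claimed strict inequality.

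The main obstacle I anticipate is a clean verification of the cross-mesh dissipation cancellation together with the vanishing of the flux-telescoping boundary contributions, handled uniformly across periodic, vanishing and reflective boundaries; the periodic case is immediate, whereas the other two require carefully matching the two staggered boundary cells and exploiting that the normal fluxes at the physical boundary are either zero or self-cancelling. A secondary technical point is that the strict inequality in the conclusion is inherited from the strict pointwise bound $|m_1|<E$ (which follows from $\vec U\in\mathcal{G}$, not merely $\mathcal{G}\cup\partial\mathcal{G}$) combined with the strict positivity $\hat\omega_\alpha>0$ of every Gauss--Lobatto weight.
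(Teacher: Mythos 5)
Your proposal is correct and follows essentially the same route as the paper's proof: PCP admissibility at the Gauss--Lobatto nodes plus exactness of the quadrature turns the discrete $L^1$ norms of $D$ and $E$ into sums of (limiter-preserved) cell averages, the scheme's conservation structure (telescoping fluxes and the cross-mesh dissipation terms) propagates these back to $t=0$ through the SSP convex combinations, and $q(\tilde{\vec U})>0$ gives the strict bound $|\tilde m_1|<\tilde E$ that produces the factor $2$. The only cosmetic difference is that the paper derives, for each forward-Euler step, the per-mesh exchange identity $\|\tilde D_h^I(t_n)\|_{L^1}=(1-\theta)\|\tilde D_h^I(t_{n-1})\|_{L^1}+\theta\|\tilde D_h^J(t_{n-1})\|_{L^1}$ (and its $J$- and $E$-analogues) and then adds, whereas you sum over both staggered meshes at once and cancel the dissipation integrals globally.
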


\begin{proof}
It only needs to consider the forward Euler time discretization.
Because $ \tilde D_j^I(t_n,\hat x^\alpha_{j\pm\frac14})$ are larger than zero
and   the central DG methods
are conservative, one yields
\begin{align} \nonumber
\left\| \tilde{D}_h^{I} (t_n,x) \right\|_{L^1}& = \frac {\Delta x} {2} \sum \limits_{j} \sum \limits_{\alpha=1}^L \hat {\omega}_\alpha \left( \left| \tilde D_j^I(t_n,\hat x^\alpha_{j-\frac14}) \right| + \left| \tilde D_j^I(t_n,\hat x^\alpha_{j+\frac14}) \right| \right) \\ \nonumber
& = \frac {\Delta x} {2} \sum \limits_{j} \sum \limits_{\alpha=1}^L \hat {\omega}_\alpha \left(  \tilde D_j^I( t_n, \hat x^\alpha_{j-\frac14})  +  \tilde D_j^I(t_n, \hat x^\alpha_{ j+\frac14}) \right)\\ \nonumber
& = \sum \limits_{j} \int_{I_j} \tilde D_j^I(t_n,x) {\rm d}x =  \sum \limits_{j} \int_{I_j}  D_j^I(t_n,x) {\rm d}x = \Delta x \sum \limits_{j} D_j^{I,(0)}(t_n) \\ \nonumber
& = \Delta x \sum \limits_{j} \left( D_j^{I,(0)} (t_{n-1}) +  \Delta t {\mathscr{L}}^{I,D}_j ( \tilde {\vec U}_h^I(t_{n-1},x),\tilde{\vec U}_h^J(t_{n-1},x) \right)\\ \nonumber
& = \sum \limits_{j} \left(  \Delta x \tilde  D_j^{I,(0)} (t_{n-1}) +  \theta \int_{I_j } \left( \tilde {D}_h^J (t_{n-1},x) - \tilde {D}_j^I (t_{n-1},x) \right)   {\rm d} x \right)\\
& = (1-\theta)\left\| \tilde{D}_h^{I} (t_{n-1},x) \right\|_{L^1} + \theta \left\| \tilde{D}_h^{J} (t_{n-1},x) \right\|_{L^1}, \label{eq:wkl-01}
\end{align}
where  ${\mathscr{L}}^{I,D}_j$ denotes the first component of ${\mathscr{L}}^{I}_j$.
Similarly, one has
\begin{equation}\label{eq:wkl-02}
\left\| \tilde{D}_h^{J} (t_n,x) \right\|_{L^1} = (1-\theta)\left\| \tilde{D}_h^{J} (t_{n-1},x) \right\|_{L^1} + \theta \left\| \tilde{D}_h^{I} (t_{n-1},x) \right\|_{L^1}.
\end{equation}
Combining \eqref{eq:wkl-01} with \eqref{eq:wkl-02} gives
\begin{align*}
\left\| \tilde{D}_h^{I} (t_n,x) \right\|_{L^1}   &+ \left\| \tilde{D}_h^{J} (t_n,x) \right\|_{L^1}= \left\| \tilde{D}_h^{I} (t_{n-1},x) \right\|_{L^1}  + \left\| \tilde{D}_h^{J} (t_{n-1},x) \right\|_{L^1}   \\
=&  \cdots=\left\| \tilde{D}_h^{I} (0,x) \right\|_{L^1}  + \left\| \tilde{D}_h^{J} (0,x) \right\|_{L^1}
= \left\| {D}_h^{I} (0,x) \right\|_{L^1}  + \left\| {D}_h^{J} (0,x) \right\|_{L^1}.
\end{align*}
Similar argument may get
\begin{align*}
\left\| \tilde{E}_h^{I} (t_n,x) \right\|_{L^1}  + \left\| \tilde{E}_h^{J} (t_n,x) \right\|_{L^1} &= \left\| \tilde{E}_h^{I} (t_{n-1},x) \right\|_{L^1}  + \left\| \tilde{E}_h^{J} (t_{n-1},x) \right\|_{L^1}   \\
&= \cdots
= \left\| {E}_h^{I} (0,x) \right\|_{L^1}  + \left\| {E}_h^{J} (0,x) \right\|_{L^1}.
\end{align*}
Using $ q\left(\tilde {\vec U}_j^I(t_n ,\hat x^\alpha_{j\pm\frac14}) \right) >0$ gives
$$
\left| \left(\tilde m_1\right)_j^I(t_n ,\hat x^\alpha_{j\pm\frac14}) \right| < \tilde E_j^I(t_n ,\hat x^\alpha_{j\pm\frac14}) ,
$$
thus one has
$$
\left\| \left(\tilde m_1\right)_h^{I} (t_n,x) \right\|_{L^1} <  \left\| \tilde{E}_h^{I} (t_n,x) \right\|_{L^1}.
$$
Similarly, one also has
$$
\left\| \left(\tilde m_1\right)_h^{J} (t_n,x) \right\|_{L^1} <  \left\| \tilde{E}_h^{J} (t_n,x) \right\|_{L^1}.
$$
Therefore one has
\begin{align*}
&\left\| \tilde{\vec U}_h^{I} (t_n,x) \right\|_{L^1} + \left\| \tilde{ \vec U}_h^{J} (t_n,x) \right\|_{L^1}  \\
& < \left\| \tilde{D}_h^{I} (t_n,x) \right\|_{L^1} + \left\| \tilde{ D}_h^{J} (t_n,x) \right\|_{L^1} + 2\left(  \left\| \tilde{E}_h^{I} (t_n,x) \right\|_{L^1} +  \left\| \tilde{E}_h^{J} (t_n,x) \right\|_{L^1} \right) \\
&
= \left\| {D}_h^{I} (0,x) \right\|_{L^1} + \left\| { D}_h^{J} (0,x) \right\|_{L^1} + 2\left(  \left\| E_h^{I} (0,x) \right\|_{L^1} +  \left\| {E}_h^{J} (0,x) \right\|_{L^1} \right)
\\
& \le
2\left( \left\| \vec U_h^{I} (0,x) \right\|_{L^1} + \left\| \vec U_h^{J} (0,x) \right\|_{L^1} \right).
\end{align*}
The proof is completed.
\end{proof}

\subsection{2D case}
\label{sec:2Dscheme}

For the sake of convenience, this subsection will use
the symbol $\vec x=(x,y)$ to replace the independent variables $(x_1,x_2)$ in \eqref{eqn:coneqn3d}.
Let $\{ I_{i,j} = (x_{i-\frac{1}{2}},x_{i+\frac{1}{2}})\times
(y_{j-\frac{1}{2}},y_{j+\frac{1}{2}}) \}$ be a uniform partition of the 2D spatial domain $\Omega$ with a constant spatial step-sizes $\Delta x=x_{i+\frac12}-x_{i-\frac12}$ and $\Delta y=y_{j+\frac12}-y_{j-\frac12}$ in $x$ and $y$ directions respectively, and $\{J_{i+\frac12,j+\frac12}=\left( x_i,x_{i+1}\right)\times \left( y_j,y_{j+1}\right) \}$ be the dual partition.
The 2D central DG methods seek two approximate solutions $\vec U_h^I$ and $\vec U_h^J$ respectively defined on those  mutually dual meshes $\{ I_{i,j}\}$ and $\{J_{i+\frac12,j+\frac12}\}$,
where for each time $t \in (0,T_f]$,
each component of $\vec U_h^I$ (resp. $\vec U_h^J$)
belongs to the finite dimensional space
of discontinuous piecewise polynomial functions, ${\cal V}_h^I$ (resp. ${\cal V}_h^J$),
defined by
\begin{align*}
&{\cal V}_h^I :=\left\{ \left. w(\vec x) \in L^1(\Omega) \right|  w(\vec x) |_{I_{i,j}} \in  {\mathbb{P}}^K (I_{i,j})  \right\} ,\\
&{\cal V}_h^J :=\left\{ \left. w(\vec x) \in L^1(\Omega) \right|  w(\vec x) |_{J_{i+\frac12,j+\frac12}} \in  {\mathbb{P}}^K (J_{i+\frac12,j+\frac12})  \right\} ,
\end{align*}
here ${\mathbb{P}}^K (I_{i,j})$ and $ {\mathbb{P}}^K (J_{i+\frac12,j+\frac12}) $ denote two spaces of polynomial of degree at most $K$ on the cells $I_{i,j}$ and  $J_{i+\frac12,j+\frac12}$ respectively and
 their dimension is equal to $K_d :=(K+1)(K+2)/2$.

If letting $\left\{ \Phi_{i,j}^{(\mu)}(\vec x)\right\}_{\mu=0}^{K_d-1}$ and $\left\{ \Phi_{i+\frac12,j+\frac12}^{(\mu)}(\vec x)\right\}_{\mu=0}^{K_d-1}$ denote the local orthogonal bases of the spaces ${\mathbb {P}}^K(I_{i,j})$ and ${\mathbb {P}}^K(J_{i+\frac12,j+\frac12})$ respectively,
then  the central DG approximate solutions $\vec U_h^I$ and $\vec U_h^J$ may be expressed as
\begin{equation}\label{eq:expressI2D}
\vec U_h^I(t,\vec x) = \sum\limits_{\mu = 0}^{K_d-1} \vec U_{i,j}^{I,(\mu)} (t)  \Phi_{i,j}^{(\mu)}(\vec x) =:  \vec U_{i,j}^I(t,\vec x), \quad {}~\vec x \in I_{i,j},
\end{equation}
and
\begin{equation}\label{eq:expressJ2D}
\vec U_h^J(t,\vec x) = \sum\limits_{\mu = 0}^{K_d-1} \vec U_{i+\frac12,j+\frac12}^{J,(\mu)} (t)  \Phi_{i+\frac12,j+\frac12}^{(\mu)}(\vec x) =:  \vec U_{i+\frac12,j+\frac12}^J(t,\vec x), \quad {}~\vec x \in J_{i+\frac12,j+\frac12}.
\end{equation}
Similar to the 1D case, the semi-discrete 2D central DG methods for $\vec U_h^I$ and $\vec U_h^J$ may be respectively given by
\begin{align} \nonumber
& \sum\limits_{\mu = 0}^{K_d-1} \left( \iint_{I_{i,j}} \Phi_{i,j}^{(\mu)} (\vec x) \Phi_{i,j}^{(\nu)} (\vec x) {\rm d}\vec x  \right) \frac{{\rm d}\vec U_{i,j}^{I,(\mu)} (t)}{{\rm d}t}  = \frac{1}{\tau_{\max}} \iint_{I_{i,j} } \left( \vec U_h^J - \vec U_h^I \right)  \Phi_{i,j}^{(\nu)} (\vec x) {\rm d} \vec x \\[2mm] \nonumber
  & \quad  + \frac{\Delta x \Delta y}{4} \sum\limits_{\alpha = 1}^{Q} \sum\limits_{\beta = 1}^{Q}  \sum\limits_{\ell,m \in \{-1,1\}} \omega_{\alpha} \omega_{\beta} \Big(
  \vec F \left( \vec U_h^J (t,x_{i+\frac{\ell}4}^{\alpha},y_{j+\frac{m}4}^{\beta}) \right) \nabla  \Phi_{i,j}^{(\nu)}  (x_{i+\frac{\ell}4}^{\alpha},y_{j+\frac{m}4}^{\beta})
   \Big)\\[2mm] \nonumber
   & \quad - \frac{\Delta y}{2} \sum\limits_{\beta = 1}^{Q}   \sum\limits_{m,s\in \{-1,1\} } s \omega_{\beta}   \vec F_1 \left( \vec U_h^J (t,x_{i+\frac{s}{2}},y_{j+\frac{m}4}^{\beta})\right) \Phi_{i,j}^{(\nu)}(x_{i+\frac{s}{2}},y_{j+\frac{m}4}^{\beta}) \\[2mm]
   & \quad - \frac{\Delta x}{2} \sum\limits_{\alpha = 1}^{Q}  \sum\limits_{\ell,s\in \{-1,1\} } s  \omega_{\alpha}   \vec F_2 \left( \vec U_h^J (t,x_{i+\frac{\ell}4}^\alpha,y_{j+\frac{s}{2}})\right) \Phi_{i,j}^{(\nu)}(x_{i+\frac{\ell}4}^\alpha,y_{j+\frac{s}{2}}) , \label{eq:schemeI:2D}
\end{align}
and
\begin{align} \nonumber
& \sum\limits_{\mu = 0}^{K_d-1} \left( \iint_{J_{i+\frac12,j+\frac12}} \Phi_{i+\frac12,j+\frac12}^{(\mu)} (\vec x) \Phi_{i+\frac12,j+\frac12}^{(\nu)} (\vec x) {\rm d}\vec x  \right) \frac{{\rm d}\vec U_{i+\frac12,j+\frac12}^{J,(\mu)} (t)}{{\rm d}t}
\\[2mm] \nonumber
 &  = \frac{1}{\tau_{\max}} \iint_{J_{i+\frac12,j+\frac12} } \left( \vec U_h^I - \vec U_h^J \right)  \Phi_{i+\frac12,j+\frac12}^{(\nu)} (\vec x) {\rm d} \vec x \\[2mm] \nonumber
  & \quad  + \frac{\Delta x \Delta y}{4} \sum\limits_{\alpha = 1}^{Q} \sum\limits_{\beta = 1}^{Q}  \sum\limits_{\ell,m \in \{-1,1\}} \omega_{\alpha} \omega_{\beta} \Big(
  \vec F \left( \vec U_h^I (t,x_{i+\frac{\ell+2}4}^{\alpha},y_{j+\frac{m+2}4}^{\beta}) \right) \nabla  \Phi_{i,j}^{(\nu)}  (x_{i+\frac{\ell+2}4}^{\alpha},y_{j+\frac{m+2}4}^{\beta})
   \Big)\\[2mm] \nonumber
   & \quad - \frac{\Delta y}{2} \sum\limits_{\beta = 1}^{Q}  \sum\limits_{m,s\in \{-1,1\} }  s \omega_{\beta}    \vec F_1 \left( \vec U_h^I (t,x_{i+\frac{s+1}{2}},y_{j+\frac{m+2}4}^{\beta})\right) \Phi_{i+\frac12,j+\frac12}^{(\nu)}(x_{i+\frac{s+1}{2}},y_{j+ \frac{m+2}4 }^{\beta}) \\[2mm]
   & \quad - \frac{\Delta x}{2} \sum\limits_{\alpha = 1}^{Q} \sum\limits_{\ell,s\in \{-1,1\} } s  \omega_{\alpha}    \vec F_2 \left( \vec U_h^J (t,x_{i+\frac{\ell+2}4 }^\alpha,y_{j+\frac{s+1}{2}})\right) \Phi_{i+\frac12,j+\frac12}^{(\nu)}(x_{i+ \frac{\ell+2}4 }^\alpha,y_{j+\frac{s+1}{2}}) , \label{eq:schemeJ:2D}
\end{align}
where $ \nu=0,\cdots,K_d-1$, $\vec F=(\vec F_1,\vec F_2)$, $\big\{ x_{i\pm \frac14}^\alpha \big\}_{\alpha=1}^Q$ and $\big\{ y_{j\pm \frac14}^\alpha \big\}_{\alpha=1}^Q$ denote the Gaussian nodes transformed into the
interval $\big[x_{i\pm \frac14}-\frac{\Delta x}{4}, x_{i\pm \frac14}+\frac{\Delta x}{4} \big]$ and  $\big[y_{j\pm \frac14}-\frac{\Delta y}{4}, y_{j\pm \frac14}+\frac{\Delta y}{4} \big]$, respectively, and the associated Gaussian quadrature weights $\{ \omega_\alpha \}_{\alpha=1}^Q$ satisfy
$\omega_\alpha>0$ and $\sum\limits_{\alpha=1}^Q \omega_\alpha =1$.
For the accuracy requirement,  $Q$ should be not less than
$K+1$ for a $\mathbb{P}^K$-based central DG method \citep{CockburnIV}.

If taking the bases as the scaled Legendre polynomials such that $\Phi_{i,j}^{(0)}(\vec x) = \Phi_{i+\frac12,j+\frac12}^{(0)}(\vec x)=1$,
then  from \eqref{eq:schemeI:2D}--\eqref{eq:schemeJ:2D} with $\nu=0$,
one may derive the evolution equations for the cell-averages of $\vec U_h^I$ and $\vec U_h^J$
as follows
\begin{align} \nonumber
& \frac{{\rm d}\vec U_{i,j}^{I,(0)} (t)}{{\rm d}t}
=\frac{1}{\tau_{\max}} \frac{1}{\Delta x \Delta y}  \iint_{I_{i,j} } \left( \vec U_h^J - \vec U_h^I \right)   {\rm d} x {\rm d} y \\[2mm] \nonumber
      &  - \frac{1}{2} \sum\limits_{\beta = 1}^{Q}  \sum\limits_{m,s\in \{-1,1\} } s \omega_{\beta}
        \left( \frac{1}{\Delta x} \vec F_1 \left( \vec U_h^J (t,x_{i+\frac{s}{2}},y_{j+  \frac{m}4  }^{\beta})\right)
        +  \frac{1}{\Delta y} \vec F_2 \left( \vec U_h^J (t,x_{i+  \frac{m}4 }^\beta,y_{j+\frac{s}{2}})\right) \right)
          \\[2mm]
& \quad \quad \quad \quad=: {\mathscr{L}}^I_{i,j} (\vec U_h^I,\vec U_h^J),
\end{align} \label{eq:schemeI:2D:ave}
and
\begin{align} \nonumber
& \frac{{\rm d}\vec U_{i+\frac12,j+\frac12}^{J,(0)} (t)}{{\rm d}t}
=\frac{1}{\tau_{\max}} \frac{1}{\Delta x \Delta y}  \iint_{J_{i+\frac12,j+\frac12} } \left( \vec U_h^I - \vec U_h^J \right)   {\rm d} x {\rm d} y \\[2mm] \nonumber
& - \frac{1}{2} \sum\limits_{\beta = 1}^{Q}  \sum\limits_{m,s\in \{-1,1\} } s \omega_{\beta}
        \left( \frac{1}{\Delta x} \vec F_1 \left( \vec U_h^I (t,x_{i+\frac{s+1}{2}},y_{j+  \frac{m+2}4 }^{\beta})\right)
        +  \frac{1}{\Delta y} \vec F_2 \left( \vec U_h^J (t,x_{i+  \frac{m+2}4 }^\beta,y_{j+\frac{s+1}{2}})\right) \right)
          \\[2mm]
& \quad \quad \quad \quad =: {\mathscr{L}}^J_{i+\frac12,j+\frac12} (\vec U_h^J,\vec U_h^I),
\end{align} \label{eq:schemeJ:2D:ave}

If the time derivatives in \eqref{eq:schemeI:2D}--\eqref{eq:schemeJ:2D} are approximated by
using the SSP  Runge-Kutta  or multi-step methods, see e.g. \eqref{eq:RK1} or \eqref{eq:multi-step},
then the fully discrete 2D central DG methods may be obtained.
In the following the PCP technique is discussed  for the above 2D central DG methods.
First,  it may be proved that the 2D central DG methods with $K =0$ are
PCP under a CFL type condition.

\begin{thm}\label{thm:2DK=0}
	If  $K=0$ and $\vec U_{i,j}^I ,\vec U_{i+\frac12,j+\frac12}^J \in {\cal G}$ for all $i$ and $j$, 	
 then under the CFL type condition
 	\begin{equation}\label{eq:2DCFL-K=0}
 		0< \frac{\Delta t}{\Delta x} +\frac{\Delta t}{\Delta y}   < \frac{ \theta } {2c},\quad \theta \in (0,1],
 	\end{equation}
 	one has
	\begin{equation*}
		   \vec U_{i,j}^I + \Delta t {\mathscr{L}}^I_{i,j} (\vec U_h^I,\vec U_h^J)  \in {\cal G},\quad \vec U_{i+\frac12,j+\frac12}^J + \Delta t {\mathscr{L}}^J_{i+\frac12,j+\frac12} (\vec U_h^J,\vec U_h^I) \in {\cal G},
	\end{equation*}
	for all $i$ and $j$.
\end{thm}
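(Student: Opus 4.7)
The plan is to mimic the 1D argument in Theorem \ref{thm:1DK=0}, only now expressing the updated cell-average as a convex combination of an admissible state and four ``Lax--Friedrichs splitting'' states per corner of $I_{i,j}$. First I would exploit $K=0$ to write the right-hand side explicitly: since $\vec U_h^J$ is piecewise constant on each dual cell $J_{i+\frac12,j+\frac12}$, for the single quadrature point ($Q=1$, weight $1$) the value of $\vec U_h^J$ at every quadrature node inside $I_{i,j}$ is just $\vec U_{i+\ell/2,j+m/2}^J$ for the appropriate corner $(\ell,m)\in\{-1,1\}^2$, and the volume integral reduces to $\frac{\Delta x \Delta y}{4}\sum_{\ell,m}\vec U_{i+\ell/2,j+m/2}^J - \Delta x \Delta y\,\vec U_{i,j}^I$.

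Setting $a=\Delta t/\Delta x$, $b=\Delta t/\Delta y$, $\theta=\Delta t/\tau_{\max}$, and writing $\vec U^J_{\ell,m}:=\vec U_{i+\ell/2,j+m/2}^J$, a direct computation gives
\begin{align*}
\vec U_{i,j}^I + \Delta t\,{\mathscr{L}}^I_{i,j}(\vec U_h^I,\vec U_h^J)
&= (1-\theta)\vec U_{i,j}^I \\
&\quad + \sum_{\ell,m\in\{-1,1\}} \Big[ \tfrac{\theta}{4}\vec U^J_{\ell,m} - \tfrac{\ell a}{2}\vec F_1(\vec U^J_{\ell,m}) - \tfrac{m b}{2}\vec F_2(\vec U^J_{\ell,m}) \Big].
\end{align*}
The bracketed summand for each corner $(\ell,m)$ can be split, using $c=1$, as
\begin{align*}
\tfrac{\theta}{4}\vec U^J_{\ell,m} &- \tfrac{\ell a}{2}\vec F_1 - \tfrac{m b}{2}\vec F_2 \\
&= \tfrac{a}{2}\bigl(\vec U^J_{\ell,m} - \ell\,c^{-1}\vec F_1(\vec U^J_{\ell,m})\bigr) + \tfrac{b}{2}\bigl(\vec U^J_{\ell,m} - m\,c^{-1}\vec F_2(\vec U^J_{\ell,m})\bigr) + \tfrac{\theta-2(a+b)}{4}\vec U^J_{\ell,m}.
\end{align*}
I would then check that the coefficients on the right sum to $\theta/4$ (they do: $\tfrac{a}{2}+\tfrac{b}{2}+\tfrac{\theta-2(a+b)}{4}=\tfrac{\theta}{4}$), and that the total over all pieces, including $(1-\theta)$ on $\vec U_{i,j}^I$, equals one.

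Under the CFL condition \eqref{eq:2DCFL-K=0}, the residual weight $\tfrac{\theta-2(a+b)}{4}$ is strictly positive, and the other weights are manifestly nonnegative for $\theta\in(0,1]$, so the right-hand side is a genuine convex combination. By Lemma \ref{lam:propertyG}(iii) (Lax--Friedrichs splitting), each state $\vec U^J_{\ell,m}\mp c^{-1}\vec F_i(\vec U^J_{\ell,m})$ belongs to ${\cal G}\cup\partial{\cal G}$, while $\vec U^J_{\ell,m}\in{\cal G}$ and $\vec U^I_{i,j}\in{\cal G}$ by hypothesis. Lemma \ref{lam:convex} (convexity of $\hat{\cal G}={\cal G}$) then yields $\vec U_{i,j}^I + \Delta t\,{\mathscr{L}}^I_{i,j}(\vec U_h^I,\vec U_h^J)\in{\cal G}$. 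The argument for $\vec U_{i+\frac12,j+\frac12}^J + \Delta t\,{\mathscr{L}}^J_{i+\frac12,j+\frac12}(\vec U_h^J,\vec U_h^I)$ is identical by symmetry of the scheme. The main obstacle, though minor, is simply bookkeeping the signs and indices so that the four corner fluxes pair correctly with the $(\ell,m)$ labels and produce the clean Lax--Friedrichs decomposition above; the CFL budget $a+b<\theta/(2c)$ emerges as precisely the condition that the ``leftover'' $\vec U^J_{\ell,m}$ coefficient stays nonnegative.
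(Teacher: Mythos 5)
Your proposal is correct, and it follows the same overall strategy as the paper: for $K=0$ rewrite the updated cell average explicitly, recognize the four corner states of the dual mesh, and exhibit the result as a convex combination of admissible states via the Lax--Friedrichs splitting property of Lemma \ref{lam:propertyG} and the convexity in Lemma \ref{lam:convex}. The only difference is in how the per-corner weight $\theta/4$ is allocated. The paper absorbs it entirely into two split states $\vec U^{J,[1]},\vec U^{J,[2]}$ built with the CFL-dependent coefficient $\frac{2}{\theta}\left(\frac{\Delta t}{\Delta x}+\frac{\Delta t}{\Delta y}\right)=\alpha^{-1}<c^{-1}$, so that by part (iii) of Lemma \ref{lam:propertyG} with $\alpha>c$ these states lie strictly in ${\cal G}$ and plain convexity finishes the proof; you instead use the borderline splitting with $c^{-1}$ (states only in ${\cal G}\cup\partial{\cal G}$) and keep the leftover weight $\frac{\theta-2(a+b)}{4}>0$ on the interior state $\vec U^J_{\ell,m}$, then conclude via the ``interior plus closure'' statement of Lemma \ref{lam:convex}; strictly speaking that lemma is stated for two points, so one should first group the closure states (the closure of the convex set $\hat{\cal G}$ is convex) before combining with the strictly interior state carrying positive weight---a routine step, but worth saying. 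Both decompositions yield the theorem under the same CFL budget $\frac{\Delta t}{\Delta x}+\frac{\Delta t}{\Delta y}<\frac{\theta}{2c}$; the paper's version has the mild advantage that every state in the combination is strictly admissible, while yours makes the origin of the CFL restriction (nonnegativity of the leftover coefficient) slightly more transparent.
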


\begin{proof}
Because both $\vec U_{i,j}^I$ and $\vec U_{i+\frac12,j+\frac12}^J$ are constant when $K=0$,
 one has
\begin{align} \nonumber
& \vec U_{i,j}^I + \Delta t {\mathscr{L}}^I_{i,j} (\vec U_h^I,\vec U_h^J) \\[2mm]  \nonumber
& =  \vec U_{i,j}^I + \frac{\theta}{\Delta x\Delta y} \iint_{I_{i,j}} \left( \vec U_h^J - \vec U_{i,j}^I \right)   {\rm d} x {\rm d} y \\[2mm]  \nonumber
& \quad  - \frac{1}{2}  \sum \limits_{m,s\in\{-1,1\}} s \left( \frac{\Delta t}{\Delta x} \vec F_1 \left( \vec U^J_{i+\frac{s}{2} , j+\frac{m}{2} } \right)
 + \frac{\Delta t}{\Delta y} \vec F_2 \left( \vec U^J_{i+\frac{m}{2} , j+\frac{s}{2} } \right)   \right)  \\[2mm]  \nonumber
& = (1-\theta) \vec U_{i,j}^I +\frac{\theta}{4}  \sum \limits_{m,s\in\{-1,1\}} \vec U^J_{i+\frac{s}{2} , j+\frac{m}{2} }  \\[2mm]  \nonumber
&\quad
- \frac{1}{2}  \sum \limits_{m,s\in\{-1,1\}}  \left( \frac{\Delta t}{\Delta x} s \vec F_1 \left( \vec U^J_{i+\frac{s}{2} , j+\frac{m}{2} } \right)  + \frac{\Delta t}{\Delta y} m \vec F_2 \left( \vec U^J_{i+\frac{s}{2} , j+\frac{m}{2} } \right)   \right)  \\[2mm]
& = (1-\theta) \vec U_{i,j}^I + \frac{\theta}{4}
 \sum \limits_{m,s\in\{-1,1\}} \left(  \frac{ \Delta y }{  \Delta x +\Delta y }
 \vec U^{J,[1]}_{i+\frac{s}{2} , j+\frac{m}{2} } + \frac{ \Delta x }{  \Delta x +\Delta y }
 \vec U^{J,[2]}_{i+\frac{s}{2} , j+\frac{m}{2} }  \right) , \label{eq:convex:2D:K=0}
\end{align}
where
\begin{align*}
& \vec U^{J,[1]}_{i+\frac{s}{2} , j+\frac{m}{2} } :=  \vec U^J_{i+\frac{s}{2} , j+\frac{m}{2} }  - \frac{2s}{\theta} \left( \frac{\Delta t}{\Delta x} + \frac{\Delta t}{\Delta y}  \right)  \vec F_1 \left(  \vec U^J_{i+\frac{s}{2} , j+\frac{m}{2} }  \right),
\\
 & \vec U^{J,[2]}_{i+\frac{s}{2} , j+\frac{m}{2} } :=  \vec U^J_{i+\frac{s}{2} , j+\frac{m}{2} }  - \frac{2m}{\theta} \left( \frac{\Delta t}{\Delta x} + \frac{\Delta t}{\Delta y}  \right)  \vec F_2 \left(  \vec U^J_{i+\frac{s}{2} , j+\frac{m}{2} }  \right).
\end{align*}
Thanks to the Lax-Friedrichs splitting property in Lemma \ref{lam:propertyG},
$ \vec U^{J,[1]}_{i+\frac{s}{2} , j+\frac{m}{2} }$,$ \vec U^{J,[2]}_{i+\frac{s}{2} , j+\frac{m}{2} }\in \mathcal G$ under the theorem hypothesis.
Combining those with \eqref{eq:convex:2D:K=0}
and using the convexity of $\cal G$ further yields
$ \vec U_{i,j}^I + \Delta t {\mathscr{L}}^I_{i,j} (\vec U_h^I,\vec U_h^J) \in {\cal G}$.
 Similar arguments may yield $\vec U_{i+\frac12,j+\frac12}^J + \Delta t {\mathscr{L}}^J_{i+\frac12,j+\frac12} (\vec U_h^J,\vec U_h^I) \in {\cal G}$. The proof is completed.
\end{proof}

Theorem \ref{thm:2DK=0} indicates that the first-order accurate 2D central DG method is PCP under the CFL type condition \eqref{eq:2DCFL-K=0}
if the forward Euler method is used for time discretization.
Similar to the 1D case, it is important to find out  a sufficient condition on the polynomial vectors $\vec U_{i,j}^I(\vec x)$ and $\vec U _{i+\frac12,j+\frac12}^J(\vec x)$  in a high-order accurate PCP central DG method.
 For the sake of convenience, omit the independent variable $t$  temporarily,
and let $\big\{ {\hat x}_{i\pm \frac14}^\alpha \big\}_{\alpha=1}^L$ and $\big\{ {\hat y}_{j\pm \frac14}^\alpha \big\}_{\alpha=1}^L$  be the Gauss-Lobatto nodes transformed into the
interval $\big[x_{i\pm \frac14}-\frac{\Delta x}{4}, x_{i\pm \frac14}+\frac{\Delta x}{4} \big]$ and $\big[y_{j\pm \frac14}-\frac{\Delta y}{4}, y_{j\pm \frac14}+\frac{\Delta y}{4} \big]$ respectively, and $\{ {\hat{\omega}}_\alpha \}_{\alpha=1}^L$ be the associated Gaussian quadrature weights satisfying
$\hat{ \omega}>0$ and $\sum\limits_{\alpha=1}^L {\hat{ \omega}}_\alpha =1$, where $L\ge (K+3)/2$.

\begin{thm} \label{thm:PCP:2DRHD}
If 
${\vec U}^I_{i,j}(\hat x_{i+\frac{s}{4}}^\alpha, y_{j+\frac{m}{4}}^\beta)\in {\cal G}$ and ${\vec U}^J_{i+\frac12,j+\frac12} ( x_{i+\frac12+\frac{s}{4}}^\beta,\hat y_{j+\frac12+\frac{m}{4}}^\alpha) \in {\cal G}$ for all $i,j\in \mathbb{Z}$, $s,m\in \{-1,1\}$, $\alpha=1,2,\cdots,L$, and $\beta=1,2,\cdots,Q$, then under the CFL type condition
 	\begin{equation}\label{eq:2DCFL-Kge1}
 		0< \frac{ \Delta t }{\Delta x} + \frac{ \Delta t }{\Delta y} \le \frac{ {\hat{\omega}}_1 \theta } {2c},\quad \theta \in (0,1],
 	\end{equation}
 	one has
	\begin{equation*}
		   \vec U_{i,j}^{I,(0)} + \Delta t {\mathscr{L}}^I_{i,j} (\vec U_h^I,\vec U_h^J)  \in {\cal G},\quad \vec U_{i+\frac12,j+\frac12}^{J,(0)} + \Delta t {\mathscr{L}}^J_{i+\frac12,j+\frac12} (\vec U_h^J,\vec U_h^I) \in {\cal G},
	\end{equation*}
	for all $i$ and $j$.
\end{thm}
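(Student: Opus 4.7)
The strategy is to mimic the proof of Theorem \ref{thm:PCP:1DRHD}, invoking three key tools: the exactness of the $L$-point Gauss--Lobatto quadrature for polynomials of degree $\le K$ (which holds since $L\ge(K+3)/2$), the convexity of the admissible set $\mathcal{G}$ from Lemma \ref{lam:convex}, and the Lax--Friedrichs splitting property of Lemma \ref{lam:propertyG}(iii). The new ingredient, already previewed in the $K=0$ identity \eqref{eq:convex:2D:K=0}, is a two-directional decomposition of the dissipation integral with weights $\frac{\Delta y}{\Delta x+\Delta y}$ and $\frac{\Delta x}{\Delta x+\Delta y}$, used to pair the $x$- and $y$-fluxes appearing in $\mathscr{L}^I_{i,j}$ with matching boundary values of $\vec U_h^J$.

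First I would evaluate $\frac{1}{\Delta x\Delta y}\iint_{I_{i,j}}\vec U_h^J\,d\vec x$ in two complementary ways. Writing $I_{i,j}$ as the disjoint union of four quarter cells, each lying inside a neighboring dual cell $J_{i\pm\frac12,j\pm\frac12}$, one applies the tensor-product rule using Gauss--Lobatto nodes in $x$ and Gauss nodes in $y$; exactness makes this an equality, and the endpoints $x=x_{i\pm\frac12}$ contribute with weight $\hat\omega_1/2$ precisely the boundary values $\vec U_h^J(x_{i\pm\frac12},y_{j\pm\frac14}^\beta)$ required by the $x$-flux terms. A second, symmetric evaluation with the roles of $x$ and $y$ swapped exposes the $y$-boundary values $\vec U_h^J(x_{i\pm\frac14}^\beta,y_{j\pm\frac12})$ required by the $y$-flux terms. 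By the theorem hypothesis and the convexity of $\mathcal{G}$, the remaining interior contributions of both evaluations belong to $\mathcal{G}$.

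Next I would substitute the weighted combination of the two evaluations into $\vec U_{i,j}^{I,(0)}+\Delta t\,\mathscr{L}^I_{i,j}(\vec U_h^I,\vec U_h^J)$ and regroup, so that each exposed boundary value combines with its associated flux to form a Lax--Friedrichs split state
\[
\vec U \pm \alpha^{-1}\vec F_k(\vec U),\qquad \alpha^{-1}=\frac{2}{\hat\omega_1\theta}\!\left(\frac{\Delta t}{\Delta x}+\frac{\Delta t}{\Delta y}\right),\quad k=1,2,
\]
where the CFL restriction \eqref{eq:2DCFL-Kge1} yields $\alpha\ge c$, so Lemma \ref{lam:propertyG}(iii) places each such state in $\mathcal{G}\cup\partial\mathcal{G}$. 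What remains is $(1-\theta)\vec U_{i,j}^{I,(0)}$ together with the interior quadrature contributions, all in $\mathcal{G}$; a final application of the convexity of $\mathcal{G}$ completes the argument. The conclusion for $\vec U_{i+\frac12,j+\frac12}^{J,(0)}+\Delta t\,\mathscr{L}^J_{i+\frac12,j+\frac12}$ then follows by a symmetric argument on the dual mesh.

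The principal difficulty is purely algebraic bookkeeping: one has to verify that the Gauss--Lobatto boundary weight $\hat\omega_1$, the splitting factors $\frac{\Delta y}{\Delta x+\Delta y}$ and $\frac{\Delta x}{\Delta x+\Delta y}$, and the flux coefficients $\Delta t/\Delta x$ and $\Delta t/\Delta y$ compose exactly to the single Lax--Friedrichs coefficient $\alpha^{-1}$ above, and that every weight appearing in the resulting convex decomposition is nonnegative and the total sum is unity. Once this reduction is carried out, the conclusion reads off immediately from the $K=0$ template of Theorem \ref{thm:2DK=0} combined with the Gauss--Lobatto exactness used in Theorem \ref{thm:PCP:1DRHD}.
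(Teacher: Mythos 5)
Your proposal follows essentially the same route as the paper's proof: the two quadrature evaluations of $\frac{1}{\Delta x\Delta y}\iint_{I_{i,j}}\vec U_h^J\,{\rm d}\vec x$ (Gauss--Lobatto in $x$/Gauss in $y$ and its transpose), blended with the weights $\frac{\Delta y}{\Delta x+\Delta y}=\frac{\lambda_x}{\lambda_x+\lambda_y}$ and $\frac{\Delta x}{\Delta x+\Delta y}=\frac{\lambda_y}{\lambda_x+\lambda_y}$, are exactly the paper's decompositions, and your pairing of the exposed interface values with the fluxes into Lax--Friedrichs split states with coefficient $\frac{2(\lambda_x+\lambda_y)}{\hat\omega_1\theta}\le c^{-1}$, followed by Lemma \ref{lam:propertyG}(iii) and the convexity of ${\cal G}$, is precisely the paper's convex-combination identity \eqref{eq:convexcom2DKge1}. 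The argument is correct as outlined.
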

\begin{proof}
Using the convexity of $\mathcal G$ and the exactness of the Gauss-Lobatto quadrature rule
with $L$ nodes and the Gauss quadrature rule with $Q$ nodes for the polynomials of degree $K$ yields
\begin{align}\nonumber
\frac{1}{\Delta x\Delta y} \iint_{I_{i,j}} \vec U_h^J {\rm d}x{\rm d}y
& = \frac{1}{\Delta x} \int_{x_{i-\frac12}}^{x_{i+\frac12}} \left( \frac12 \sum \limits_{\beta =1}^Q \sum \limits_{m\in \{-1,1\} } \omega_\beta \vec U_h^J (x,y_{j+\frac{m}{4}}^\beta)  \right) {\rm d} x \\ \nonumber
& = \frac12 \sum \limits_{\beta =1}^Q \sum \limits_{m\in \{-1,1\} } \omega_\beta \left( \frac{1}{\Delta x} \int_{x_{i-\frac12}}^{x_{i+\frac12}}  \vec U_h^J (x,y_{j+\frac{m}{4}}^\beta)  {\rm d} x \right) \\ \nonumber
& = \frac12 \sum \limits_{\beta =1}^Q \sum \limits_{m\in \{-1,1\} } \omega_\beta \left(    \frac12 \sum \limits_{\alpha =1}^L \sum \limits_{s\in \{-1,1\} } \hat{\omega}_\alpha \vec U_h^J (\hat x_{i+\frac{s}{4}}^\alpha,y_{j+\frac{m}{4}}^\beta)    \right) \\
& = \frac12 \sum \limits_{\beta =1}^Q \sum \limits_{m\in \{-1,1\} } \omega_\beta \left( \frac { \hat \omega_1}2   \sum \limits_{s\in \{-1,1\} } \vec U_h^J (x_{i+\frac{s}{2}}, y_{j+\frac{m}{4}}^\beta )  + (1-\hat \omega_1) \Xi_{i,j+\frac{m}{4}}^\beta \right), \label{eq:2DconvexcombA}
\end{align}
where
$$
\Xi_{i,j+\frac{m}{4}}^\beta :=\frac{1}{2\left(1-\hat \omega_1 \right)} \left(  \sum \limits_{\alpha=2}^L \hat \omega_\alpha \vec U_h^J (   \hat x_{i-\frac14}^\alpha, y_{j+\frac{m}{4}}^\beta )   +  \sum \limits_{\alpha=1}^{L-1} \hat \omega_\alpha \vec U_h^J (   \hat x_{i+\frac14}^\alpha, y_{j+\frac{m}{4}}^\beta )   \right) \in {\cal G},
$$
and $\hat \omega_1 = \hat \omega_L \le \frac12$ has been used.
Similarly, one has
\begin{align} \label{eq:2DconvexcombB}
\frac{1}{\Delta x\Delta y} \iint_{I_{i,j}} \vec U_h^J {\rm d}x{\rm d}y
= \frac12 \sum \limits_{\beta =1}^Q \sum \limits_{m\in \{-1,1\} } \omega_\beta \left( \frac { \hat \omega_1}2   \sum \limits_{s\in \{-1,1\} } \vec U_h^J ( x_{i+\frac{m}{4}}^\beta,  y_{j+\frac{s}{2}} )  + (1-\hat \omega_1) \Xi_{i+\frac{m}{4},j}^\beta \right),
\end{align}
with
$$
\Xi_{i+\frac{m}{4},j}^\beta :=\frac{1}{2\left(1-\hat \omega_1 \right)} \left(  \sum \limits_{\alpha=2}^L \hat \omega_\alpha \vec U_h^J (   x_{i+\frac{m}{4}}^\beta, \hat y_{j-\frac14}^\alpha )   +  \sum \limits_{\alpha=1}^{L-1} \hat \omega_\alpha \vec U_h^J (   x_{i+\frac{m}{4}}^\beta,  \hat y_{j+\frac14}^\alpha )   \right) \in {\cal G},
$$
and
$$
\vec U_{i,j}^{I,(0)}=\frac12 \sum \limits_{\beta =1}^Q \sum \limits_{m\in \{-1,1\} } \omega_\beta \left(    \frac12 \sum \limits_{\alpha =1}^L \sum \limits_{s\in \{-1,1\} } \hat{\omega}_\alpha \vec U_h^I (\hat x_{i+\frac{s}{4}}^\alpha,y_{j+\frac{m}{4}}^\beta)    \right) \in {\cal G}.
$$
Combining \eqref{eq:2DconvexcombA} and   \eqref{eq:2DconvexcombB} gives
\begin{align}\nonumber
& \frac{1}{\Delta x\Delta y} \iint_{I_{i,j}} \vec U_h^J {\rm d}x{\rm d}y  = \frac{\lambda_x} {\lambda_x+\lambda_y} \frac{1}{\Delta x\Delta y} \iint_{I_{i,j}} \vec U_h^J {\rm d}x{\rm d}y +  \frac{\lambda_y} {\lambda_x+\lambda_y} \frac{1}{\Delta x\Delta y} \iint_{I_{i,j}} \vec U_h^J {\rm d}x{\rm d}y \\ \nonumber
& = \frac12 \sum \limits_{\beta =1}^Q \sum \limits_{m\in \{-1,1\} } \omega_\beta \left( \frac { \hat \omega_1 }{2(\lambda_x+\lambda_y)}   \sum \limits_{s\in \{-1,1\} }  \left( \lambda_x  \vec U_h^J (x_{i+\frac{s}{2}}, y_{j+\frac{m}{4}}^\beta ) +
\lambda_y \vec U_h^J ( x_{i+\frac{m}{4}}^\beta,  y_{j+\frac{s}{2}} ) \right) \right) \\
& \quad \quad + \frac12 \sum \limits_{\beta =1}^Q \sum \limits_{m\in \{-1,1\} } \omega_\beta (1-\hat \omega_1) \Xi_{i,j}^{\beta,m}, \label{eq:2DconvexcombAB}
\end{align}
where $\lambda_x:=\Delta t/\Delta x, \lambda_y:=\Delta t/\Delta y$, and
$$
\Xi_{i,j}^{\beta,m} := \frac{\lambda_x} {\lambda_x+\lambda_y}  \Xi_{i,j+\frac{m}4}^\beta + \frac{\lambda_y} {\lambda_x+\lambda_y}  \Xi_{i+\frac{m}4,j}^\beta \in {\cal G}.
$$
Therefore, one gets
\begin{align} \nonumber
& \vec U_{i,j}^{I,(0)} + \Delta t {\mathscr{L}}^I_{i,j} (\vec U_h^I,\vec U_h^J) = (1- \theta)  \vec U_{i,j}^{I,(0)} + \frac{\theta}{\Delta x\Delta y} \iint_{I_{i,j}} \vec U_h^J {\rm d}x{\rm d}y \\ \nonumber
&  \quad - \frac{1}{2} \sum\limits_{\beta = 1}^{Q}   \sum\limits_{m,s\in \{-1,1\} } s \omega_{\beta}
        \left( \lambda_x \vec F_1 \left( \vec U_h^J (x_{i+\frac{s}{2}},y_{j+\frac{m}{4}}^{\beta})\right)
        +  \lambda_y \vec F_2 \left( \vec U_h^J (x_{i+\frac{m}{4}}^\beta,y_{j+\frac{s}{2}})\right) \right) \\ \nonumber
& \overset{\eqref{eq:2DconvexcombAB}} {=} (1- \theta)  \vec U_{i,j}^{I,(0)} + \frac{\theta}{2} \sum \limits_{\beta =1}^Q \sum \limits_{m\in \{-1,1\} } \omega_\beta (1-\hat \omega_1) \Xi_{i,j}^{\beta,m} \\
&\quad + \frac{\theta}{2}  \sum \limits_{\beta =1}^Q  \sum \limits_{m,s\in \{-1,1\} }  \frac{ \omega_\beta \hat \omega_1}{2}
\left( \frac{ \lambda_x }{\lambda_x+\lambda_y} \vec U^{J,\beta}_{i+\frac{s}{2},j+\frac{m}{4}}
+
\frac{ \lambda_y }{\lambda_x+\lambda_y} \vec U^{J,\beta}_{i+\frac{m}{4},j+\frac{s}{2}}
  \right), \label{eq:convexcom2DKge1}
\end{align}
where
\begin{align*}
& \vec U^{J,\beta}_{i+\frac{s}{2},j+\frac{m}{4}}   := \vec U_h^J (x_{i+\frac{s}{2}},y_{j+\frac{m}{4}}^{\beta}) -\frac{2s(\lambda_x+\lambda_y)}{\theta \hat \omega_1} \vec F_1 \left( \vec U_h^J (x_{i+\frac{s}{2}},y_{j+\frac{m}{4}}^{\beta}) \right) \in {\cal G} \cup \partial {\cal G},
\\
& \vec U^{J,\beta}_{i+\frac{m}{4},j+\frac{s}{2}}  := \vec U_h^J (x_{i+\frac{m}{4}}^\beta,y_{j+\frac{s}{2}}) -  \frac{2s(\lambda_x+\lambda_y)}{\theta \hat \omega_1} \vec F_2  \left( \vec U_h^J (x_{i+\frac{m}{4}}^\beta,y_{j+\frac{s}{2}})\right) \in {\cal G} \cup \partial {\cal G},
\end{align*}
due to  the Lax-Friedrichs splitting property in Lemma \ref{lam:propertyG}
and the theorem hypothesis.
Using  \eqref{eq:convexcom2DKge1} and the convexity of $\cal G$  further yields $ \vec U_{i,j}^{I,(0)} + \Delta t {\mathscr{L}}^I_{i,j} (\vec U_h^I,\vec U_h^J) \in {\cal G}$.
 Similar arguments yield $\vec U_{i+\frac12,j+\frac12}^{J,(0)} + \Delta t {\mathscr{L}}^J_{i+\frac12,j+\frac12} (\vec U_h^J,\vec U_h^I) \in {\cal G}$. The proof is completed.
\end{proof}


Although the sufficient condition for the 2D high-order accurate central DG methods
in Theorem \ref{thm:PCP:2DRHD} is  given  only for the forward Euler time discretization,
it is also  valid the high-order accurate SSP time discretization \eqref{eq:RK1} or  \eqref{eq:multi-step},
which has been expressed as  a convex combination of the forward Euler method.
Built on the above theoretical results,
the 2D PCP limiting procedure may be  presented and is very similar to the 1D case
so that its details may be omitted here.
The only difference is that the 2D PCP limiter is used to
ensure the admissibility of $\vec U_h^I(\vec x)$ and $\vec U_h^J(\vec x)$ at
the following points
$$
{\cal S}_{ij} = \left( \hat {\cal S}_{i}^x \otimes  {\cal S}_{j}^y \right) \bigcup  \left( {\cal S}_{i}^x \otimes \hat {\cal S}_{j}^y \right)   \bigcup  \left( {\cal S}_{i}^x \otimes  {\cal S}_{j}^y \right),
$$
for all $i$ and $j$, where $\bigotimes$ denotes the tensor product of sets, and
\begin{align*}
& \hat {\cal S}_{i}^x := \left\{\hat{x}_{i-\frac14}^\alpha\right\}_{\alpha=1}^L  \bigcup \left\{\hat{x}_{i+\frac14}^\alpha\right\}_{\alpha=1}^L, \quad
  {\cal S}_{i}^x := \left\{x_{i-\frac14}^\beta\right\}_{\beta=1}^Q  \bigcup \left\{{x}_{i+\frac14}^\beta\right\}_{\beta=1}^Q,\\[2mm]
& \hat {\cal S}_{j}^y := \left\{\hat{y}_{j-\frac14}^\alpha\right\}_{\alpha=1}^L  \bigcup \left\{\hat{y}_{j+\frac14}^\alpha\right\}_{\alpha=1}^L, \quad
 {\cal S}_{j}^y := \left\{y_{j-\frac14}^\beta\right\}_{\beta=1}^Q  \bigcup \left\{{y}_{j+\frac14}^\beta\right\}_{\beta=1}^Q.
\end{align*}

If replacing the solution polynomials of high-order accurate central DG methods
with the limited polynomials  at each stage of SSP Runge-Kutta method \eqref{eq:RK1}
or each step of SSP muti-step method \eqref{eq:multi-step},
then using Theorem \ref{thm:PCP:1DRHD} may prove that
the resulting 2D fully discrete central DG methods are PCP under some CFL type conditions.

%

\begin{thm}\label{thm:2D}
If  the 2D high-order accurate  central DG solution polynomials are revised
to the above limited polynomials at each stage of SSP Runge-Kutta method  \eqref{eq:RK1}
or each step of muti-step method \eqref{eq:multi-step},
then (i)  the resulting Runge-Kutta central DG methods are PCP under the CFL type condition \eqref{eq:2DCFL-Kge1},
{(ii)}  the resulting multi-step central DG scheme is PCP under the CFL type condition
 	\begin{equation}\label{eq:2DCFL-Kge1MS}
 	0< \frac{ \Delta t }{\Delta x} + \frac{ \Delta t }{\Delta y} \le \frac{ {\hat{\omega}}_1 \theta } {2c},
 	\quad \theta \in \left(0,\frac13 \right].
 	\end{equation}
\end{thm}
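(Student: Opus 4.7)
The plan is to proceed entirely by reduction to the forward Euler case already handled in Theorem \ref{thm:PCP:2DRHD}, exploiting the fact that both the SSP Runge-Kutta method \eqref{eq:RK1} and the SSP multi-step method \eqref{eq:multi-step} can be written as convex combinations of forward Euler updates, together with the convexity of $\mathcal{G}$ (Lemma \ref{lam:convex}) and the admissibility-preserving property of the 2D PCP limiter (the direct analogue of Lemma \ref{lam:limiter}). The key observation is that after the PCP limiter is applied, the cell-averages are preserved and the resulting limited polynomials take values in $\mathcal{G}_\epsilon \subset \mathcal{G}$ at every point of the critical set $\mathcal{S}_{ij}$, which is precisely the hypothesis of Theorem \ref{thm:PCP:2DRHD}.

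For part (i), I would argue by induction on the RK stage. Assume that at the beginning of a stage the solution polynomials $\vec U_h^I$, $\vec U_h^J$ (limited from the previous stage or the initial data) satisfy ${\vec U}^I_{i,j}(\hat x_{i+s/4}^\alpha, y_{j+m/4}^\beta)\in\mathcal{G}$ and the analogous conditions on $\vec U_h^J$ at all Gauss--Lobatto/Gauss tensor-product points. Under the CFL condition \eqref{eq:2DCFL-Kge1}, Theorem \ref{thm:PCP:2DRHD} gives that the forward Euler updated cell-averages belong to $\mathcal{G}$. Each intermediate RK stage in \eqref{eq:RK1} is a convex combination of such forward Euler updates, so its cell-averages also lie in $\mathcal{G}$ by convexity. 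Applying the 2D PCP limiter then produces limited polynomials whose cell-averages are unchanged and whose point values on $\mathcal{S}_{ij}$ lie in $\mathcal{G}_\epsilon$, restoring the induction hypothesis and preparing the solution for the next stage.

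For part (ii), the multi-step method \eqref{eq:multi-step} may be regrouped as
\[
\vec U^{n+1} = \frac{16}{27}\bigl(\vec U^n + 3\Delta t\,\mathscr{L}(\vec U^n)\bigr) + \frac{11}{27}\Bigl(\vec U^{n-3} + \tfrac{12}{11}\Delta t\,\mathscr{L}(\vec U^{n-3})\Bigr),
\]
which is a convex combination of two forward Euler updates with effective time steps $3\Delta t$ and $\tfrac{12}{11}\Delta t$ respectively. Applying Theorem \ref{thm:PCP:2DRHD} to both updates simultaneously requires these effective steps to satisfy \eqref{eq:2DCFL-Kge1}; the binding constraint is $3\Delta t\bigl(\tfrac1{\Delta x}+\tfrac1{\Delta y}\bigr)\le \hat\omega_1 \theta'/(2c)$ for some $\theta'\in(0,1]$, which is equivalent to \eqref{eq:2DCFL-Kge1MS} after setting $\theta=\theta'/3\in(0,1/3]$. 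A convexity argument then places the updated cell-averages in $\mathcal{G}$, and applying the PCP limiter closes the induction as in part (i).

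The one place where care is genuinely needed, rather than merely routine bookkeeping, is the verification that the convex-combination reformulation of the SSP methods respects not just the admissibility of cell-averages (which is the cleanest consequence of Theorem \ref{thm:PCP:2DRHD}) but also is compatible with the pointwise admissibility required by the limiter in the next stage. This is ensured because the limiter is applied \emph{after} each stage update to the pre-limited polynomials, using only the fact that the cell-averages at that moment belong to $\mathcal{G}_\epsilon$; the analogue of Lemma \ref{lam:limiter} in 2D then provides the needed pointwise admissibility. Hence no additional hypothesis beyond the CFL restrictions \eqref{eq:2DCFL-Kge1} and \eqref{eq:2DCFL-Kge1MS} is required, and the theorem follows.
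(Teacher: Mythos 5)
Your proposal is correct and follows essentially the same route the paper intends: the paper states Theorem \ref{thm:2D} without a detailed proof, asserting exactly this reduction of the SSP Runge--Kutta and multi-step updates to convex combinations of forward Euler steps, combined with Theorem \ref{thm:PCP:2DRHD}, the convexity of $\mathcal{G}$, and the cell-average-preserving 2D PCP limiter. Your derivation of the restriction $\theta\in(0,\tfrac13]$ for the multi-step scheme, via the effective step $3\Delta t$ (equivalently the effective dissipation parameter $3\theta\le 1$), is precisely the intended justification of \eqref{eq:2DCFL-Kge1MS}.
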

It is worth mentioning that the resulting 2D PCP central DG methods are also $L^1$-stable
similar to Theorem \ref{thm:stability}.

\section{Numerical experiments}
\label{sec:experiments}

This section conducts several numerical experiments on the 1D and 2D highly challenging
ultra-relativistic RHD problems
with large Lorentz factor, or strong discontinuities, or low rest-mass density or pressure,
to demonstrate  the accuracy, robustness, and effectiveness
of the proposed PCP central DG methods.
 To shorten the paper length, it  will only present
 the numerical results obtained by the ${\mathbb{P}}^2$-based central DG methods
  with the third-order accurate Runge-Kutta time discretization \eqref{eq:RK1} or multi-step time discretization \eqref{eq:multi-step}.
For convenience, abbreviate them as ``{\tt PCPRKCDGP2}'' and ``{\tt PCPMSCDGP2}'' respectively.
Unless otherwise stated, $\theta$ is taken as 1 for {\tt PCPRKCDGP2} and $\frac13$ for {\tt PCPMSCDGP2}.

\subsection{1D case}

This section is to  conduct four 1D numerical experiments.
In all computations, the time stepsize  $\Delta t$ will be taken
as $ 0.5 { {\hat{\omega}}_1 \theta \Delta x} c^{-1}$ with $\hat{\omega}_1=\frac16$.

\begin{example}[1D smooth problem] \label{example1Dsmooth}\rm
It  is used  to check the accuracy of the 1D PCP central DG methods.
	The initial data
	are taken as
$$
	\vec V(0,x)=\big(\rho(0,x),\vec v(0,x),p(0,x)\big)^{\rm T}
	=\big( 1+0.99999\sin(2 \pi x), 0.99, 10^{-2}\big)^{\rm T},
	\quad x\in [0,1),
$$
and  thus the exact solutions can be given by
$$\vec V(t,x)=\big( 1+0.99999\sin(x-0.99t), 0.99, 10^{-2} \big)^{\rm T}, \quad x\in[0,1),\ \ t\geq 0,
$$
which describes a RHD sine wave propagating periodically and quickly in the interval $[0,1)$ with low density and pressure.
%

The ideal EOS \eqref{eq:EOSideal} with  $\Gamma=\frac53$ is first considered.
Table \ref{tab:1Daccuracy}
lists the  $l^1$ and $l^2$-errors at $t=0.2$ and  corresponding orders
obtained  by using {\tt PCPRKCDGP2} and {\tt PCPMSCDGP2},
respectively.
The results show that the theoretical orders are obtained by both {\tt PCPRKCDGP2} and {\tt PCPMSCDGP2} and
the PCP limiting procedure does not destroy the accuracy. The error graphs  in Fig. \ref{fig:1Dsmooth}
  display the same phenomenon for {three different} EOS.

\begin{table}[htbp]
  \centering 
    \caption{\small Example \ref{example1Dsmooth}: Numerical    $l^1$- and $l^2$-errors and orders
    	at $t=0.2$ of  {\tt PCPRKCDGP2} and {\tt PCPMSCDGP2} for the ideal EOS with $\Gamma=5/3$.
  }
\begin{tabular}{|c||c|c|c|c||c|c|c|c|}
  \hline
\multirow{2}{8pt}{$N$}
 &\multicolumn{4}{c||}  {\tt PCPRKCDGP2}  &\multicolumn{4}{c|}  {\tt PCPMSCDGP2} \\
 \cline{2-9}
 &$l^1$ $ {\mbox{error}}$ &$l^1$ order &$l^2$ error &$l^2$ order  &$l^1$ error &$l^1$ order &$l^2$ error &$l^2$ order \\
 \hline
10 &2.402e-4& --     & 3.102e-4   &--    &1.987e-4& --    &2.430e-4 &--\\
20&3.439e-5&2.80  & 4.988e-5   &2.64   &2.290e-5& 3.12&2.948e-5 &3.04\\
40&5.031e-6&2.77  &9.328e-6   &2.42  &2.845e-6& 3.01&3.686e-6 &3.00\\
80&6.036e-7&3.06  &1.180e-6    &2.98 &3.564e-7& 3.00&4.611e-7 &3.00\\
160&4.458e-8&3.76 &5.767e-8  &4.35 &4.456e-8& 3.00&5.766e-8 &3.00\\
320&5.573e-9&3.00&7.209e-9  &3.00 &5.570e-9& 3.00&7.207e-9 &3.00\\
\hline
\end{tabular}\label{tab:1Daccuracy}
\end{table}

\begin{figure}[htbp]
  \centering
    \subfigure[EOS \eqref{eq:EOS:Mathews}]
    {\includegraphics[width=0.32\textwidth]{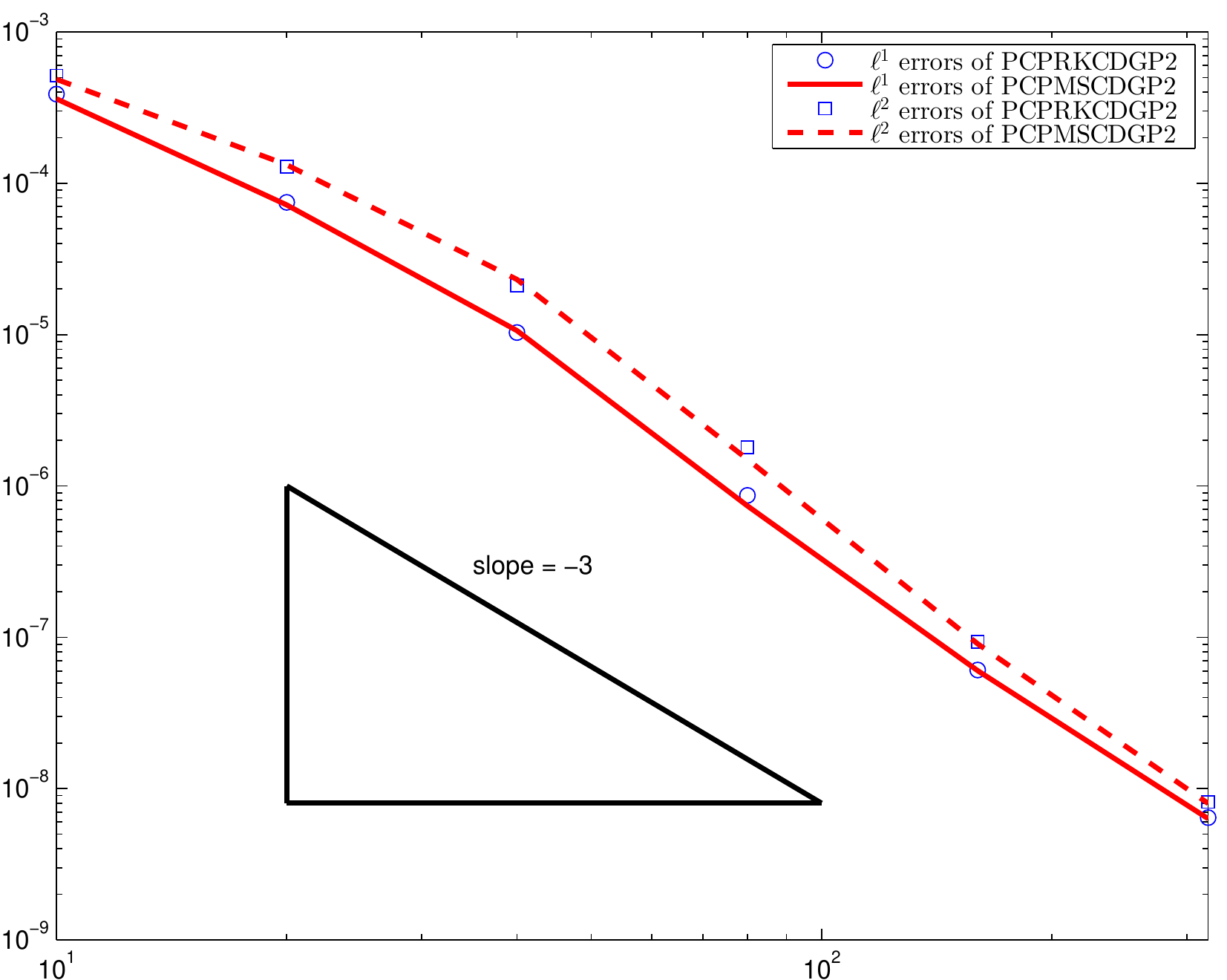}}
  \subfigure[EOS \eqref{eq:EOS:Sokolov}]
  {\includegraphics[width=0.32\textwidth]{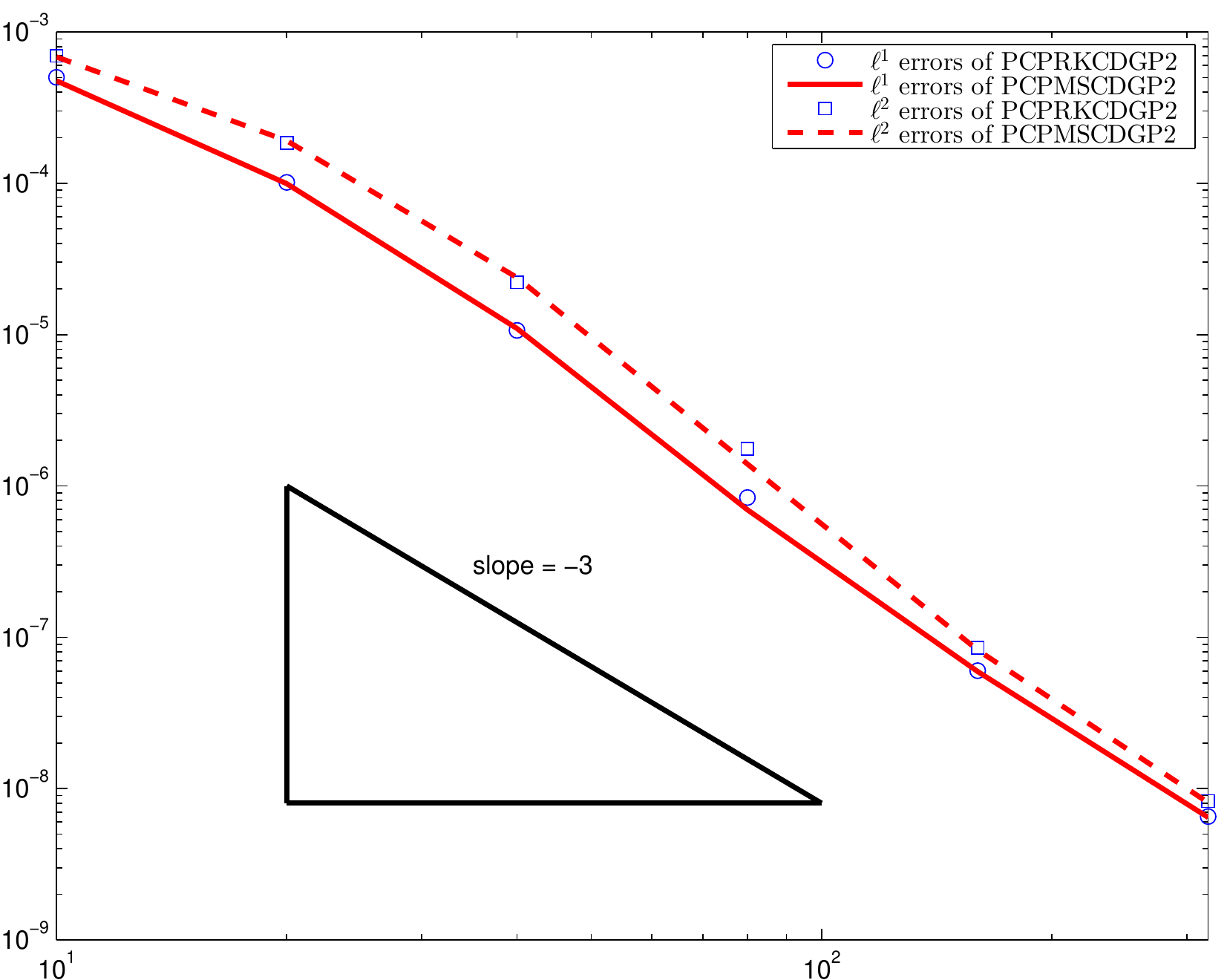}}
  \subfigure[EOS \eqref{eq:EOS:Ryu}]
  {\includegraphics[width=0.32\textwidth]{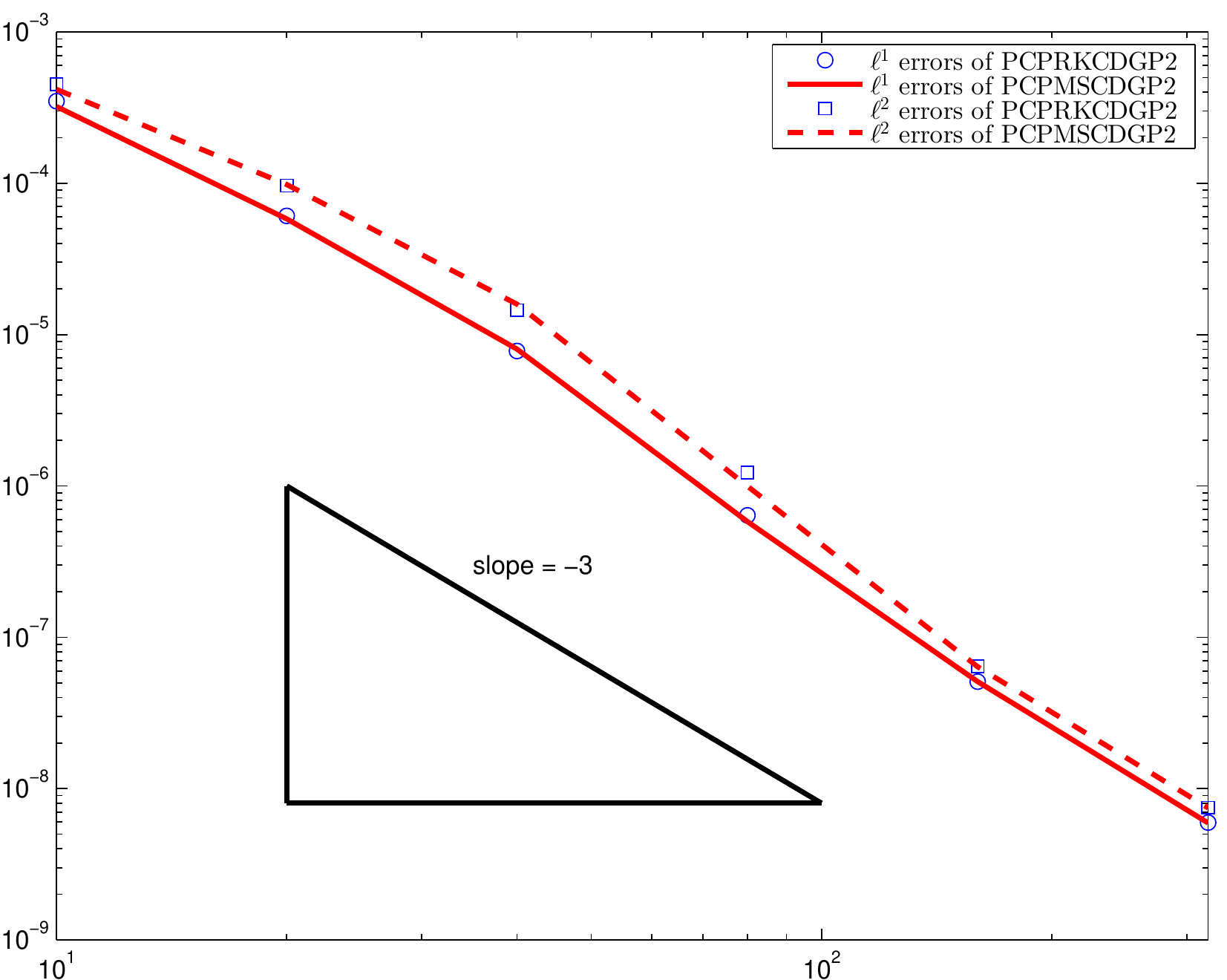}}
  \caption{\small Example \ref{example1Dsmooth}: Numerical  $l^1$- and $l^2$-errors
  	 at $t=0.2$ of  {\tt PCPRKCDGP2} and {\tt PCPMSCDGP2}.
 }
  \label{fig:1Dsmooth}
\end{figure}

\end{example}

To verify the capability of the proposed PCP central DG methods in resolving 1D ultra-relativistic wave configurations,  a Riemann problem, a shock heating problem,
and  a blast wave interaction problem will be solved and
only numerical  results of {\tt PCPMSCDGP2} will be presented in the following
since the results of  {\tt PCPRKCDGP2} are very similar to {\tt PCPMSCDGP2}.

\begin{figure}[htbp]
  \centering
  {\includegraphics[width=0.48\textwidth]{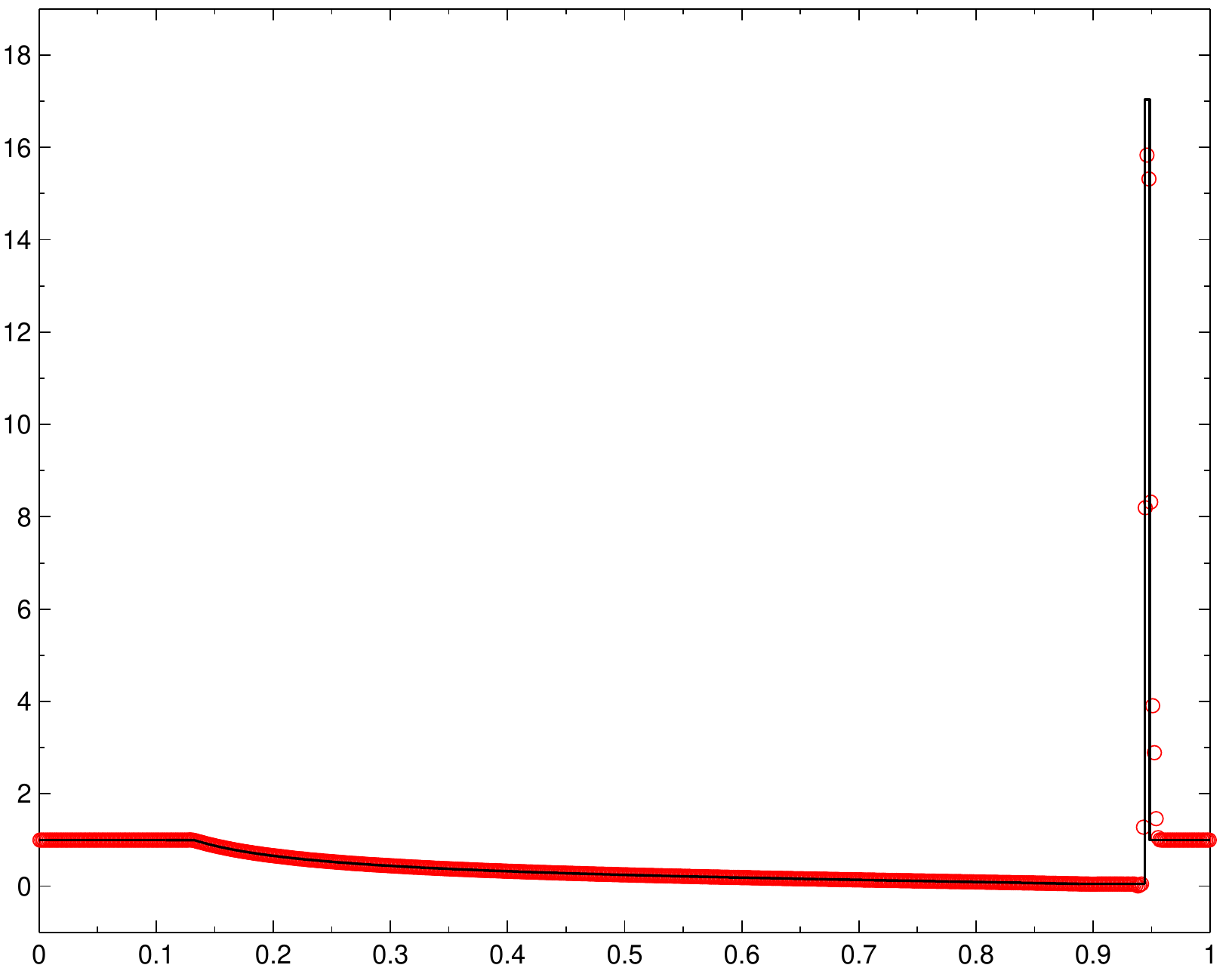}}
  {\includegraphics[width=0.48\textwidth]{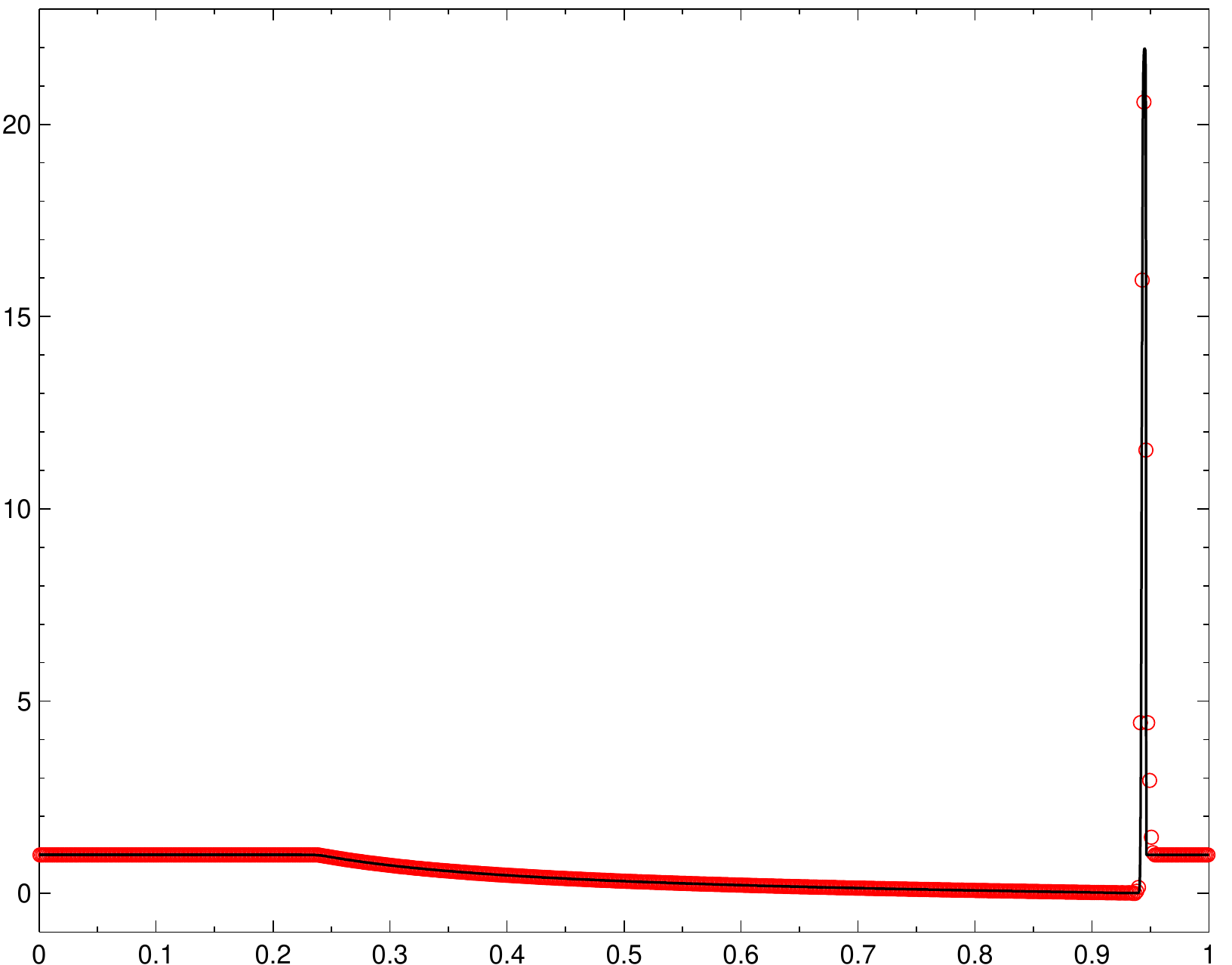}}
  {\includegraphics[width=0.48\textwidth]{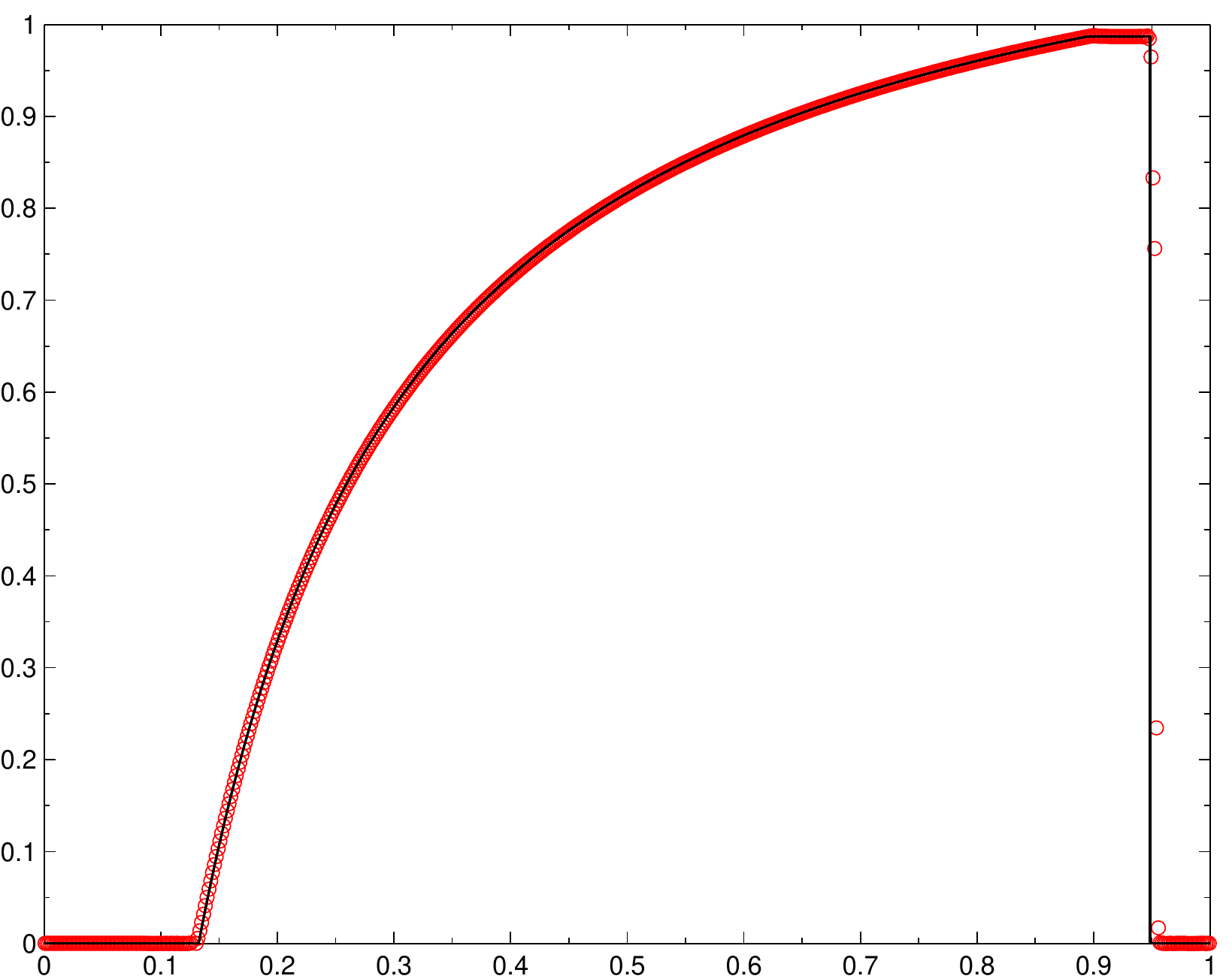}}
  {\includegraphics[width=0.48\textwidth]{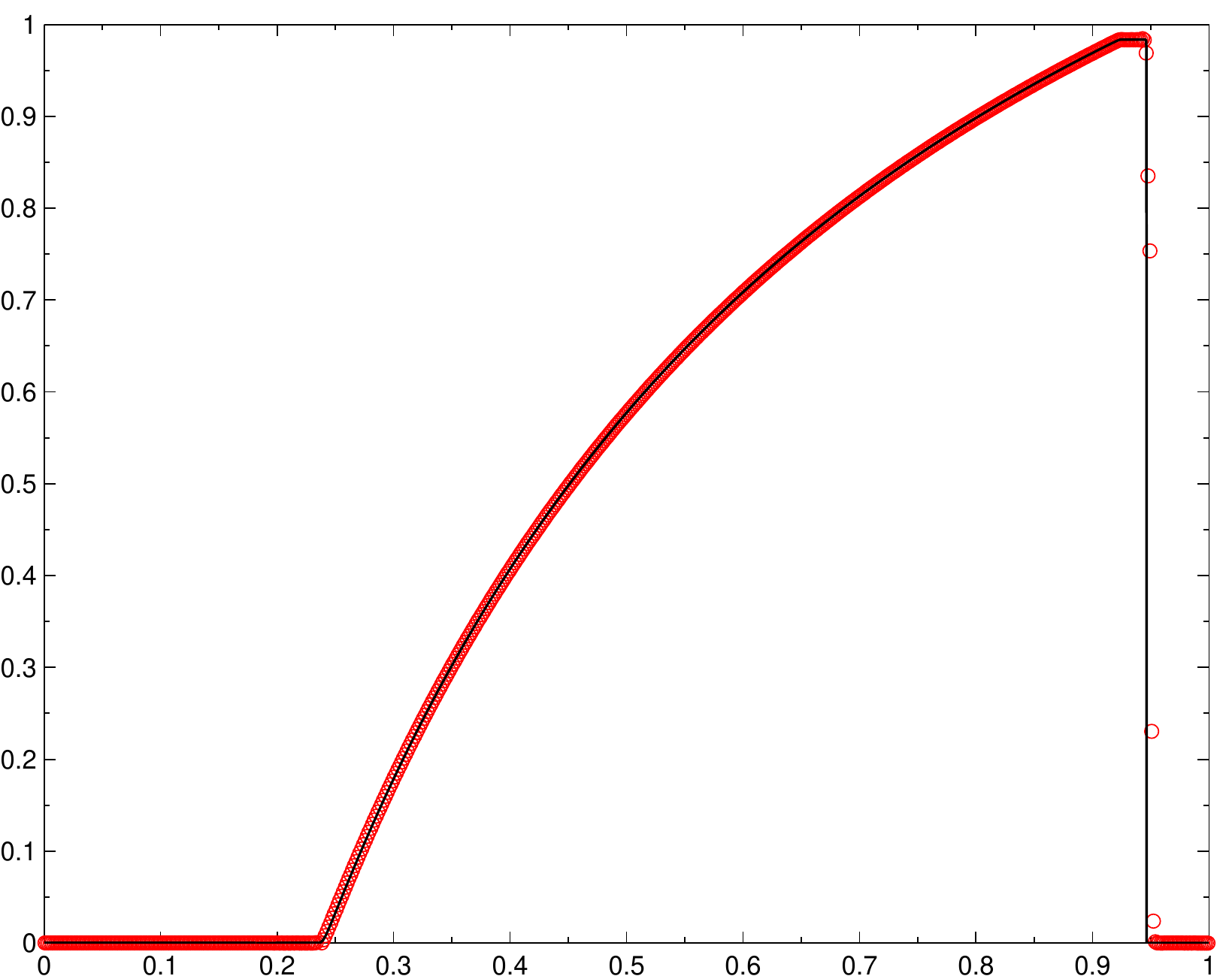}}
  {\includegraphics[width=0.48\textwidth]{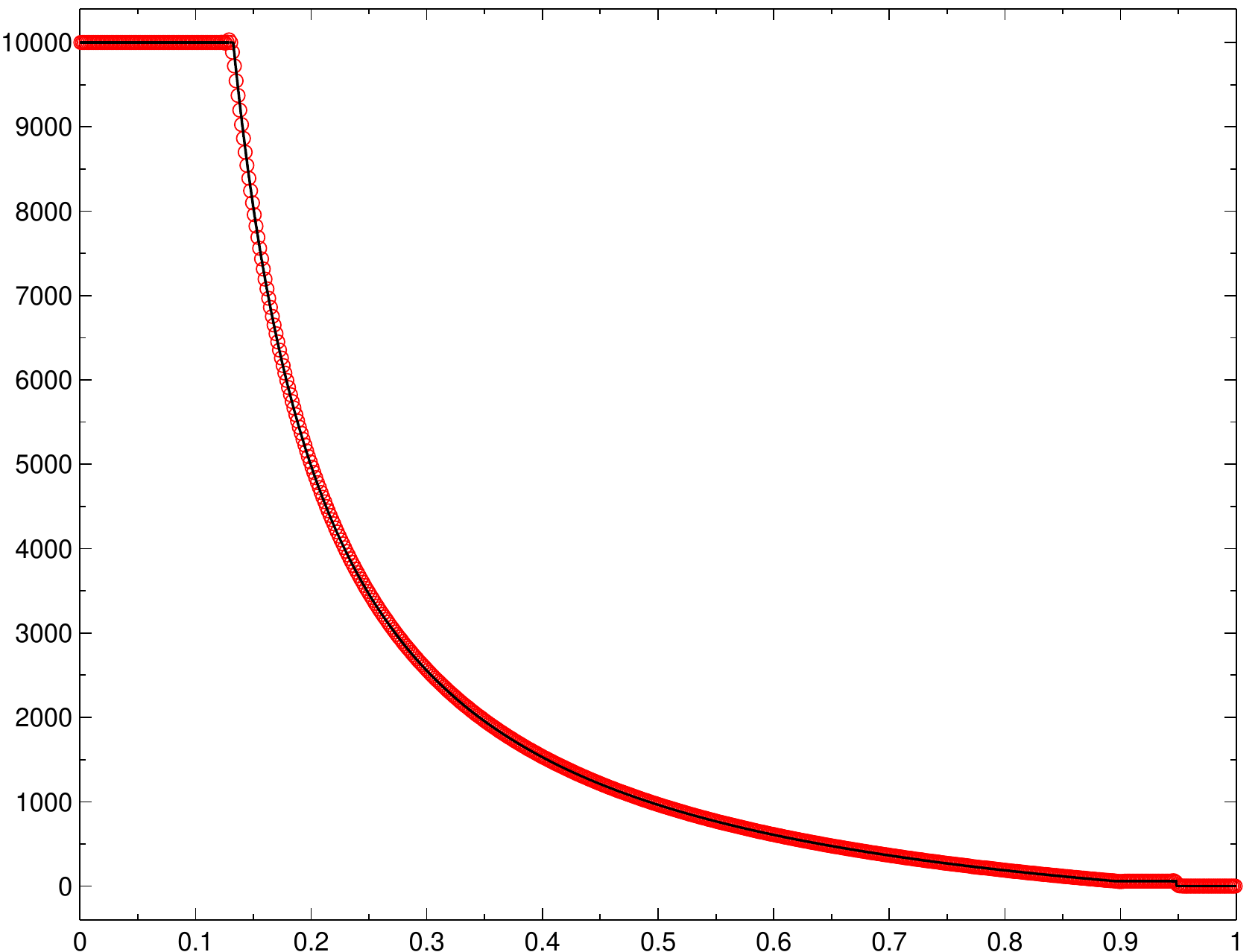}}
  {\includegraphics[width=0.48\textwidth]{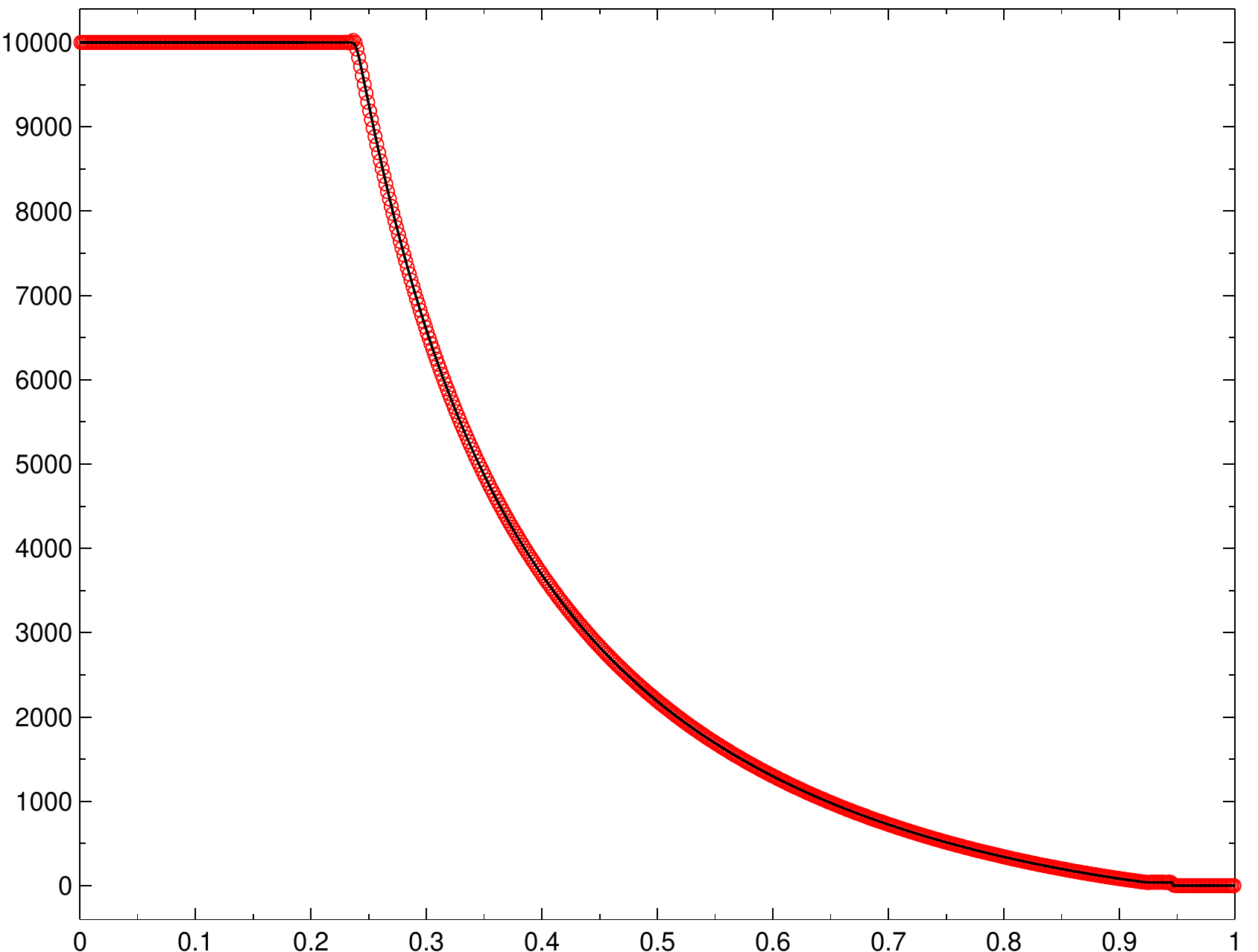}}
  \caption{\small Example \ref{example1DRiemann}: The density $\rho$,
  	velocity $v_1$, and  pressure $p$ at $t=0.45$ obtained by using  {\tt PCPMSCDGP2}
  	with 640 uniform cells.
  	Left: ideal EOS \eqref{eq:EOSideal} with $\Gamma=5/3$;
  	right: EOS \eqref{eq:EOS:Sokolov}.}
  \label{fig:1DRiemann}
\end{figure}

\begin{figure}[htbp]
  \centering
  {\includegraphics[width=0.48\textwidth]{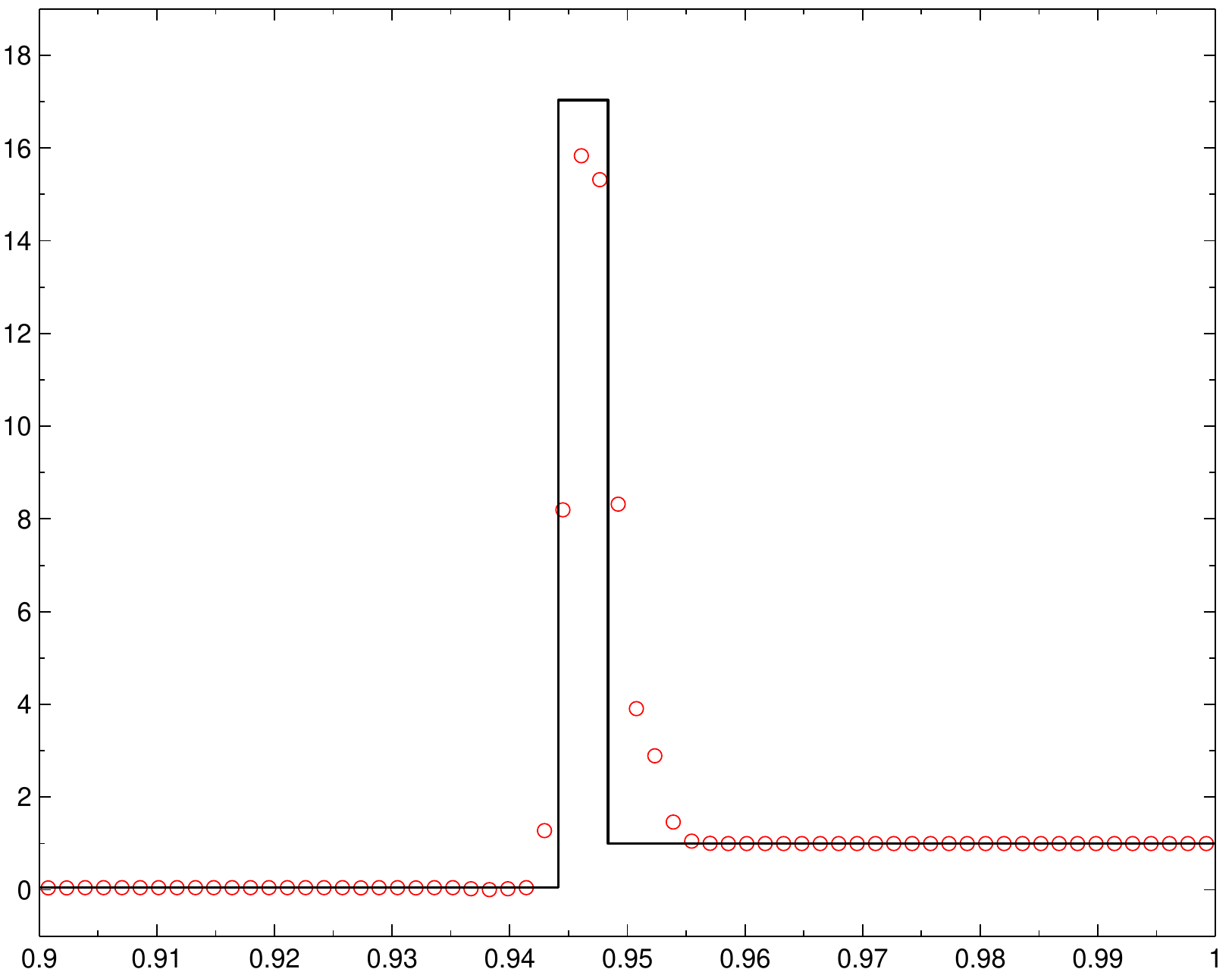}}
  {\includegraphics[width=0.48\textwidth]{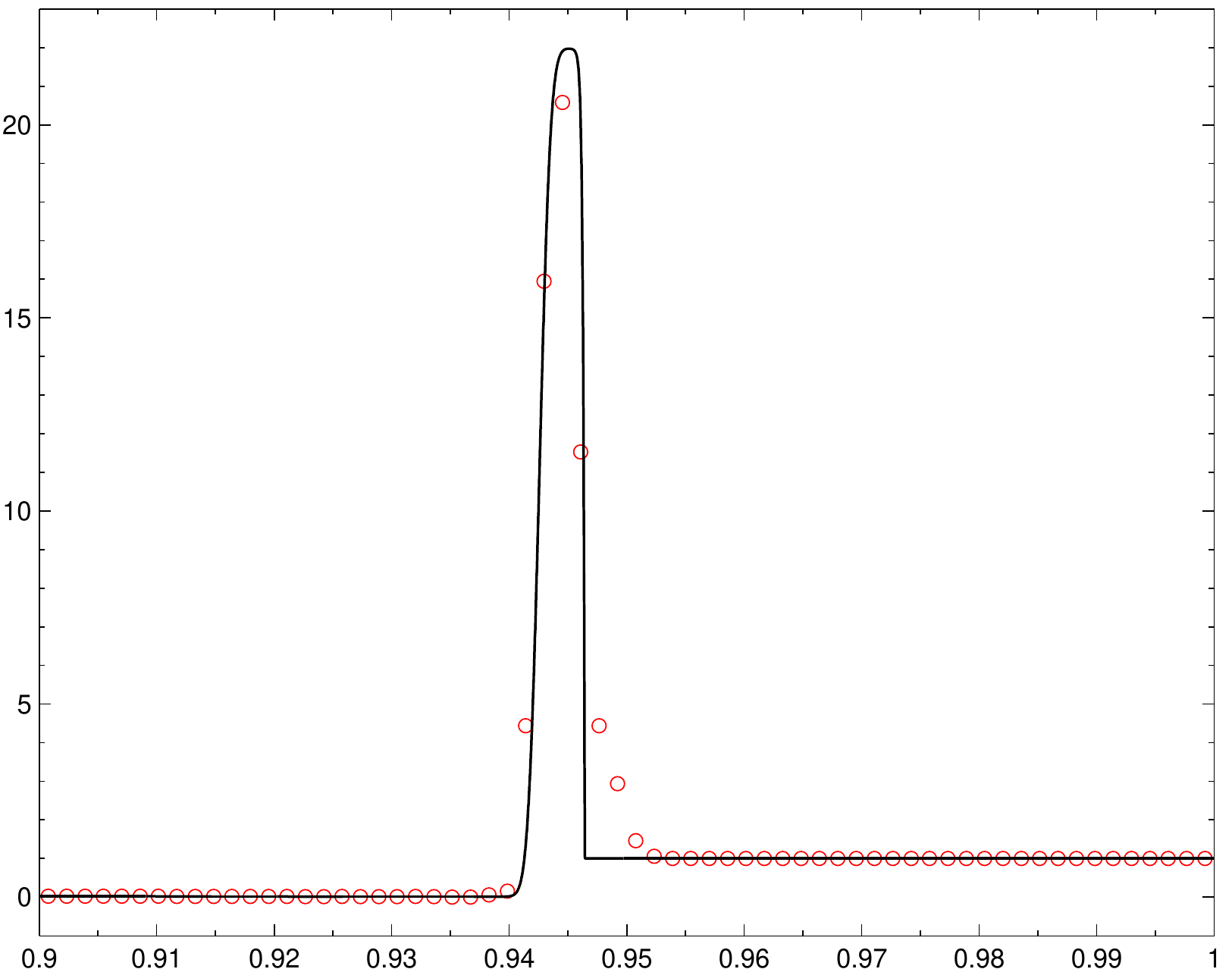}}
  \caption{\small Same as Fig. \ref{fig:1DRiemann} except for the close-up of rest-mass density.}
  \label{fig:1DRiemann:rho}
\end{figure}

\begin{example}[1D Riemann problem] \label{example1DRiemann}\rm
The  initial data of 1D RHD Riemann problem considered here are 
  \begin{equation}
    \label{eq:1DRiemann}
    \vec V(0,x)=
    \begin{cases}
      (1,0,10^4)^{\rm T},\ \ & x < 0.5,\\
      (1,0,10^{-8})^{\rm T},\ \ & x>0.5.
    \end{cases}
  \end{equation}
The initial discontinuity will evolve as a strong left-moving rarefaction wave,
 a quickly right-moving contact discontinuity  and a  shock wave.
The speeds of  the  contact discontinuity and shock wave are about 0.986956 and 0.9963757 respectively
for the ideal gas with $\Gamma=5/3$, see \citep{WuTang2015},
so that they are very close to the speed of light  and this test becomes  very ultra-relativistic.

Fig. \ref{fig:1DRiemann} displays the numerical results at $t=0.45$ obtained by using {\tt PCPMSCDGP2} (``{$\circ$}'') with  640 uniform cells
within the domain $[0,1]$, where the solid lines denote the exact solutions   \citep{exRP} for the
ideal EOS, and reference solutions for the EOS \eqref{eq:EOS:Sokolov}.
The close-ups of rest-mass densities are displayed in Fig. \ref{fig:1DRiemann:rho}.
Since it is difficult to get the exact solution  for a general EOS,
our reference solutions are numerically obtained  by using the Lax-Friedrichs scheme
over a very fine mesh of $100000$ uniform cells.
It is worth emphasizing that  the width of  region between
the contact discontinuity and  shock wave
at $t=0.45$ is about $4\times10^{-3}$
so that it is not easy to well resolve the contact discontinuity and  shock wave
with 640 uniform cells in the domain $[0,1]$.
From Figs. \ref{fig:1DRiemann} and \ref{fig:1DRiemann:rho},
we see that
{\tt PCPMSCDGP2} exhibits very good resolution and well
captures the wave configuration in the extremely narrow region between
the contact discontinuity and  shock wave, in comparison with
the fifth- and ninth-order accurate finite difference WENO schemes \citep{WuTang2015};
 the maximal densities  for {\tt PCPMSCDGP2} within the narrow region  between
the contact discontinuity and  shock wave  are about 92.98\%
of the analytic value for the ideal EOS \eqref{eq:EOSideal},
and 93.67\% of the reference value for the EOS \eqref{eq:EOS:Sokolov}, respectively;
the nonlinear addition of velocities yields a curved profile
for the rarefaction fan, as opposed to a linear one in the non-relativistic case.
and the wave configurations in Fig. \ref{fig:1DRiemann} for two EOS are different.
If  the PCP limiteing procedure is not employed,
then the high-order accurate central DG methods will break down  quickly
after  few time steps due to nonphysical numerical solutions.
\end{example}

\begin{figure}[htbp]
  \centering
  {\includegraphics[width=0.48\textwidth]{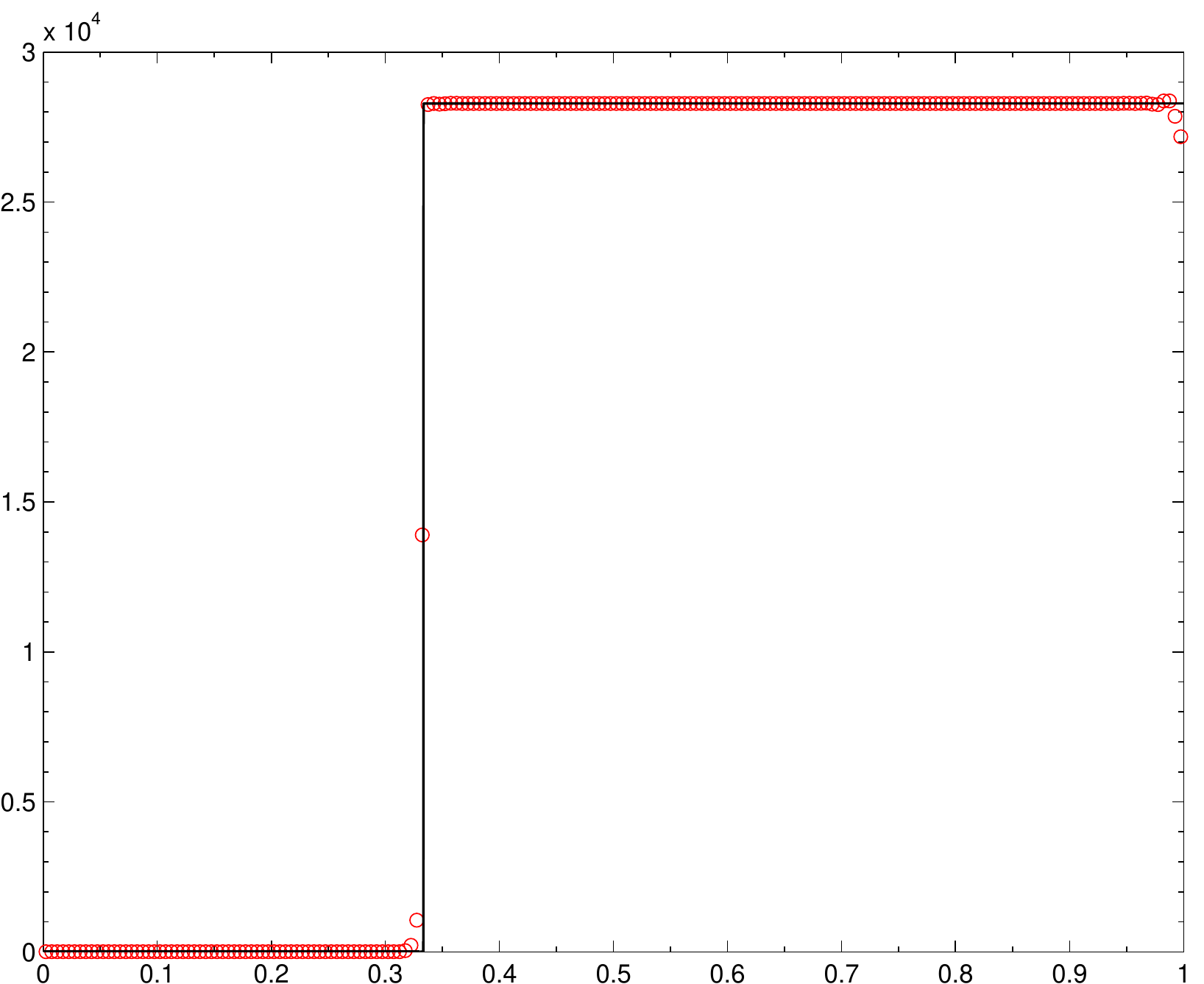}}
  {\includegraphics[width=0.48\textwidth]{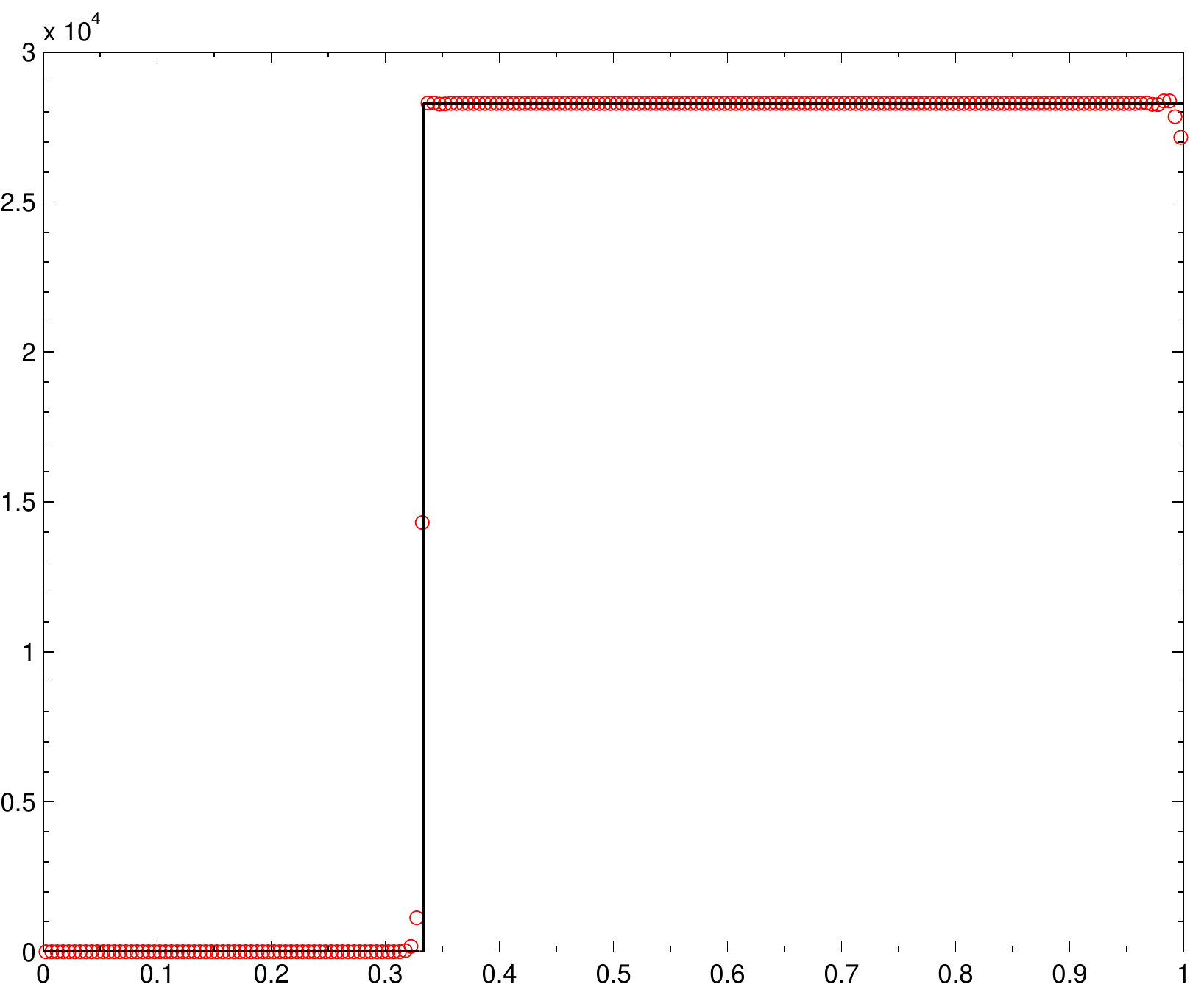}}
  {\includegraphics[width=0.48\textwidth]{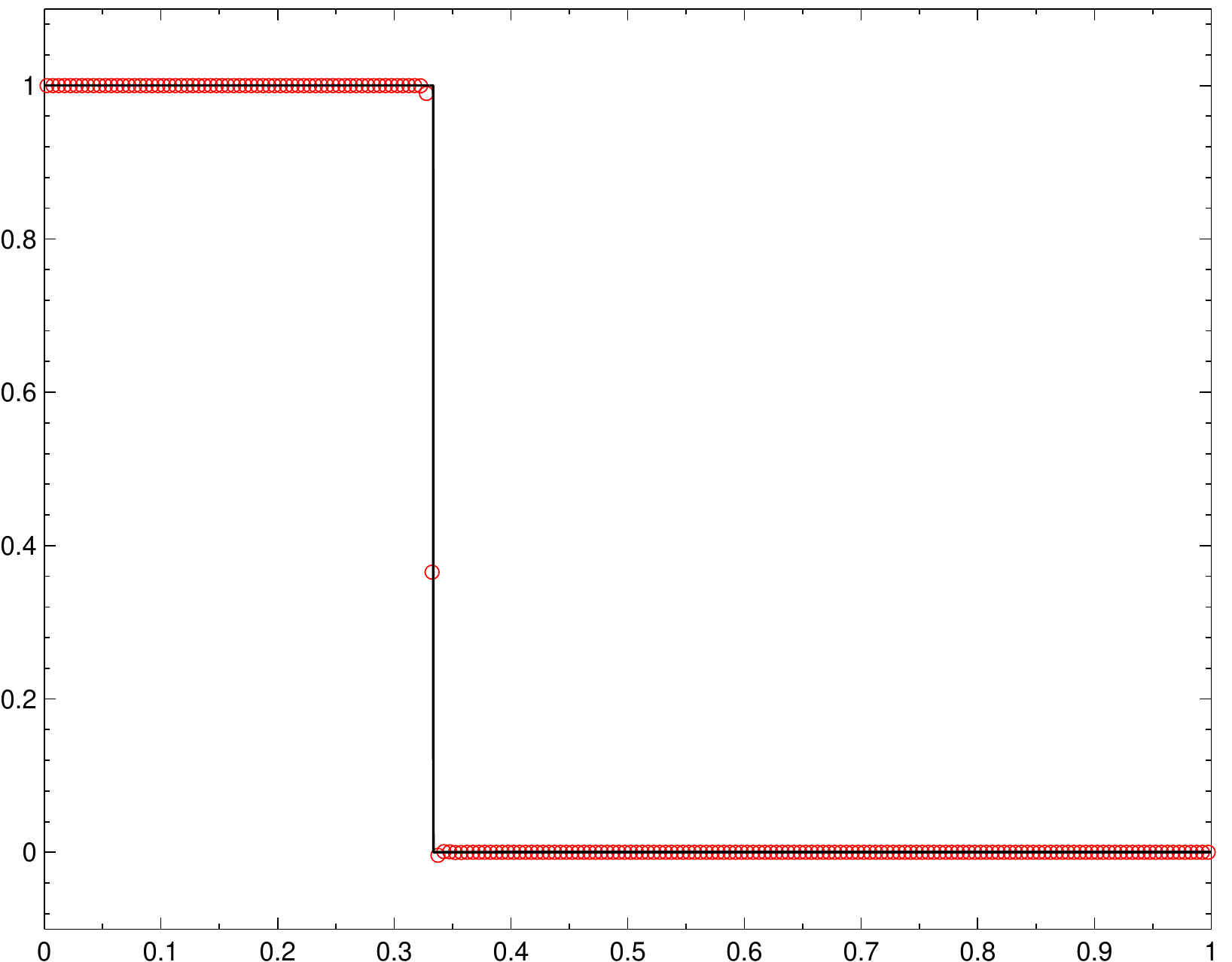}}
  {\includegraphics[width=0.48\textwidth]{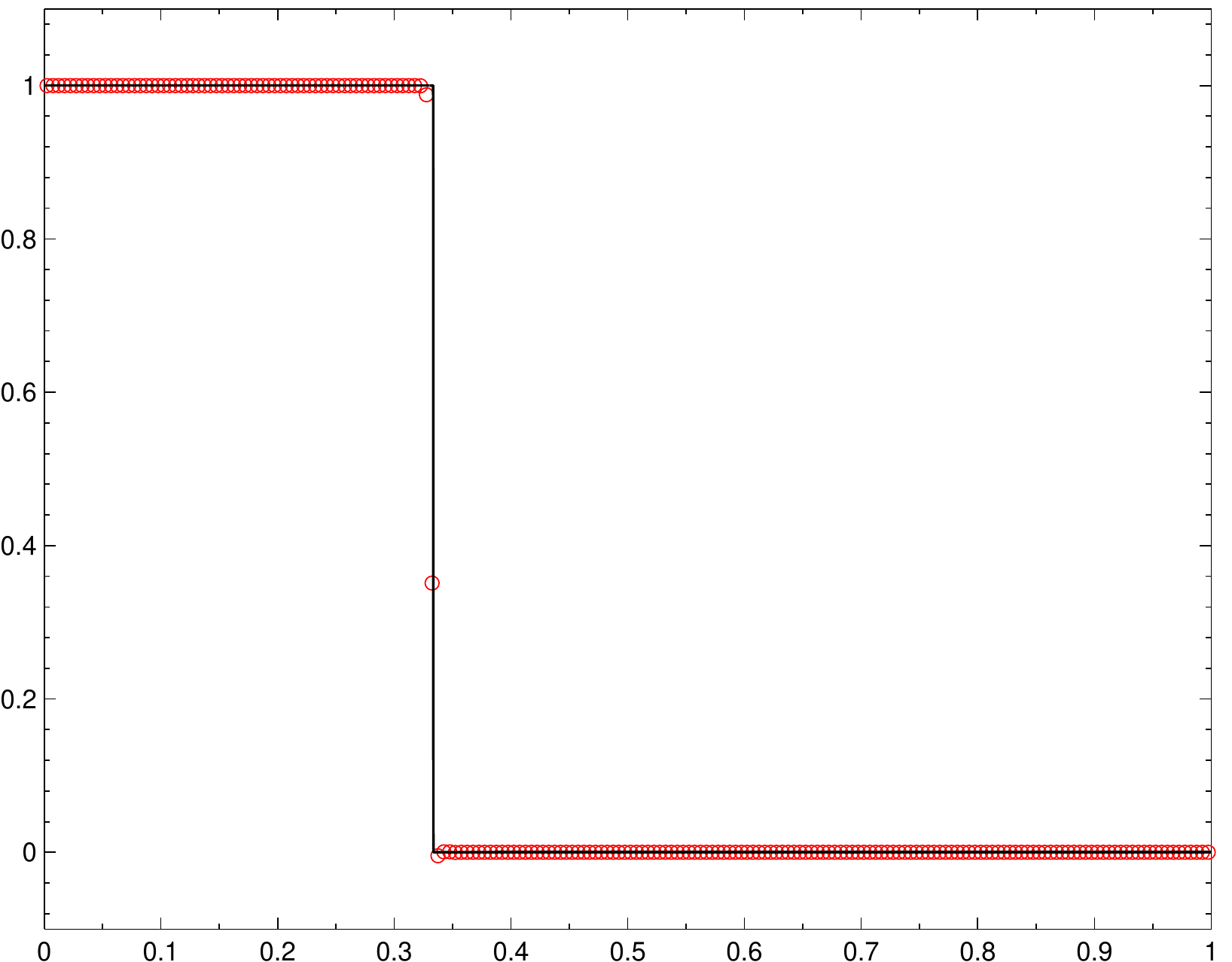}}
  {\includegraphics[width=0.48\textwidth]{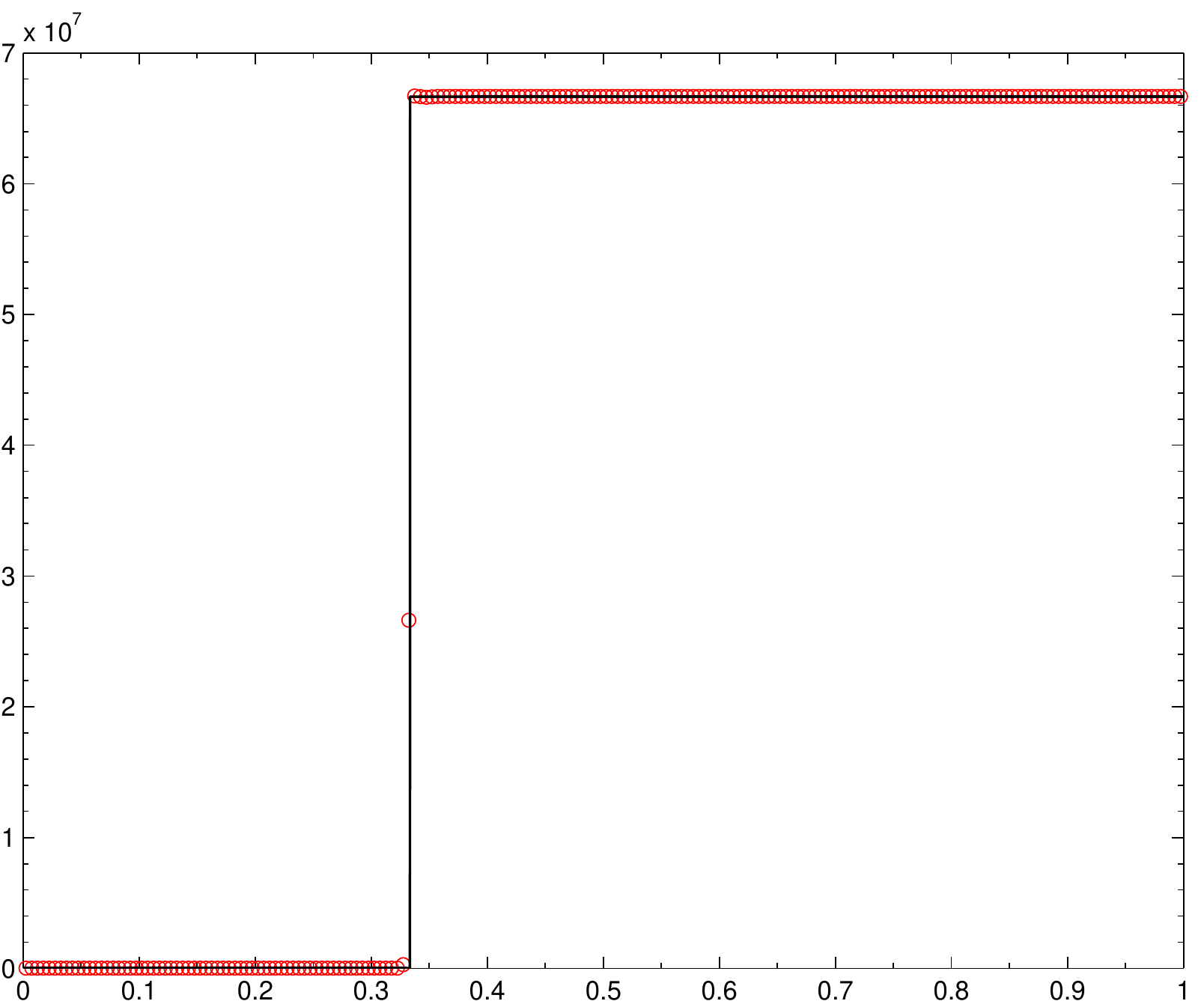}}
  {\includegraphics[width=0.48\textwidth]{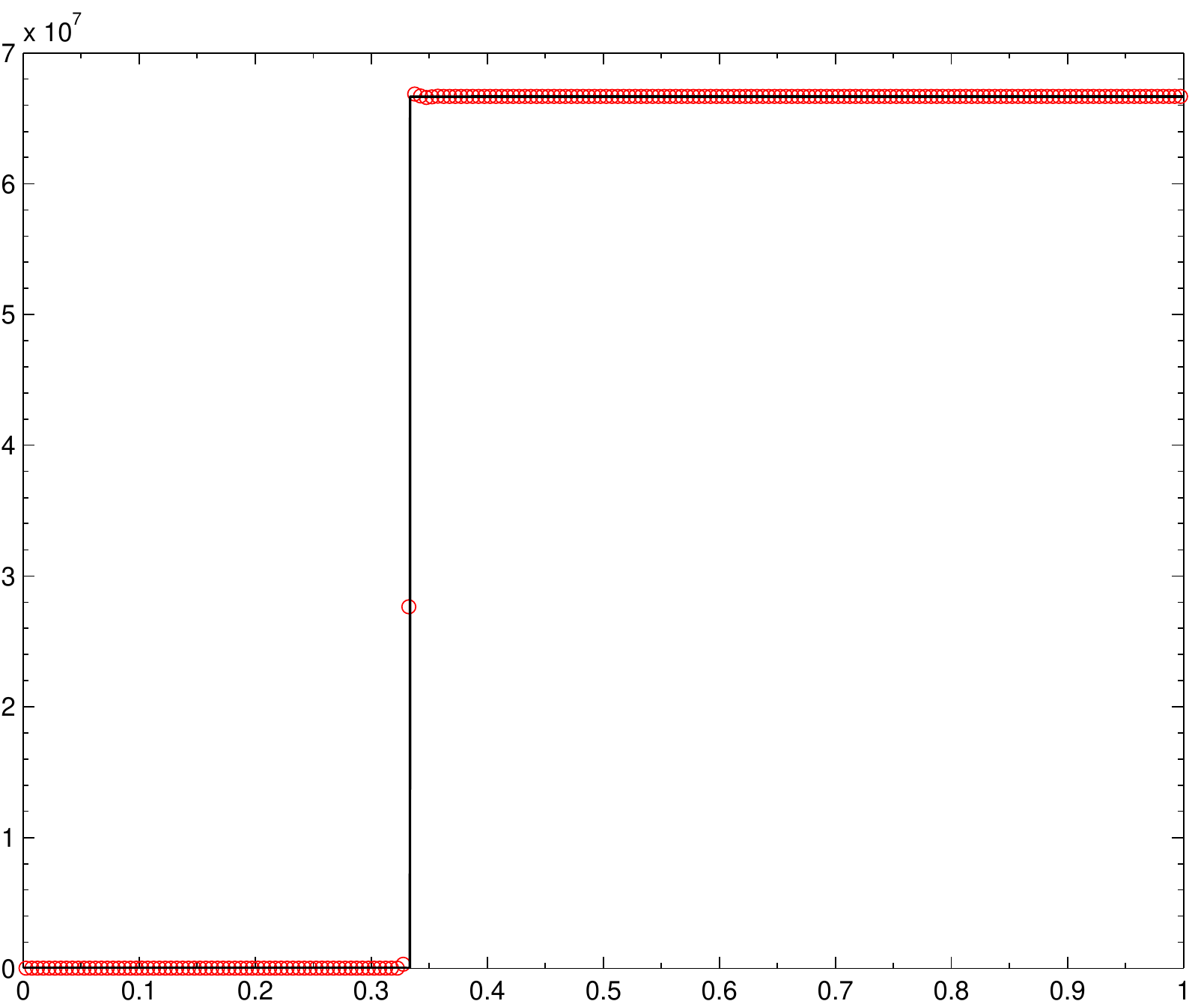}}
  \caption{\small Example \ref{exampleSH}: The density $\rho$,  velocity $v_1$, and
  	 pressure $p$ at $t=2$ obtained by using  {\tt PCPMSCDGP2} (``{$\circ$}'')
  	 with  200 uniform cells.
  	 The solid lines denote the exact solutions.
  	 Left: ideal EOS \eqref{eq:EOSideal} with $\Gamma=4/3$;
  	 right:  EOS \eqref{eq:EOS:Ryu}.}
  \label{fig:SH}
\end{figure}

\begin{example}[Shock heating problem] \label{exampleSH}\rm
The  test is to solve the shock heating problem \citep{Blandford1976}.
The computational domain $[0,1]$ with a  reflecting boundary
at $x=1$
is initially filled with a cold gas (the specific internal energy is nearly zero
and taken as 0.0001 in the computations),
which
 has an unit rest-mass density and the velocity $v_0$ of $1-10^{-8}$.
When the initial gas moves toward to the reflecting boundary, the gas is compressed and
  heated as the kinetic energy is converted into the internal energy.
After then, a reflected strong shock wave is formed and propagates to the left.
 Behind the reflected shock wave, the gas is at rest and has a specific internal energy of $W_0 - 1$  due to the energy conservation across the shock wave, $W_0=(1-v_0^2)^{-1/2}$ is about 7071.07. The compression ratio $\sigma$ across the relativistic shock wave is about $
\sigma \approx 4 W_0 + 3
\approx 28287.27,$
and grows linearly to the infinite  with the Lorentz factor $W_0$  when $v_0$ tends to speed of light $c$.
It is worth noting that the compression ratio
across the non-relativistic shock wave  is always
bounded, e.g. by ${(\Gamma+1)}/{(\Gamma -1)}$ for the ideal gas.

Here we will consider  the ideal EOS with the adiabatic index  $\Gamma$ of $4/3$
and the EOS \eqref{eq:EOS:Ryu}.
Fig. \ref{fig:SH} displays
the numerical solutions at $t=2$ obtained by using  {\tt PCPMSCDGP2}  (``{$\circ$}'')  with 200 uniform cells.
It is seen that {\tt PCPMSCDGP2}  exhibits good robustness for this ultra-relativistic problem
and  high resolution for the strong shock wave, 
even though there exists the well-known wall-heating phenomenon  near the reflecting boundary $x=1$.
The difference between two  different EOS is very small because of the very low specific internal energy.
In this test,  it is also necessary for  the successful  performance of  the high-order accurate central DG methods
to use the PCP limiting procedure.

\end{example}

\begin{example}[Blast wave interaction] \label{exampleBW}\rm
It is an initial-boundary-value problem for the RHD equations \eqref{eqn:coneqn3d}
with $d=1$
and  very severe  due to the strong relativistic shock waves and
interaction between blast waves  in a narrow region  \citep{Marti3,YangHeTang2011,WuTang2015}.
The  initial data are taken as
\begin{equation}
    \label{eq:BlastInteract}
    \vec V(0,x) =
    \begin{cases}
      (1,0,1000)^{\rm T},\ \ & 0<x< 0.1,\\
      (1,0,0.01)^{\rm T},\ \ & 0.1<x<0.9,\\
      (1,0,100)^{\rm T},\ \ &  0.9<x<1,
    \end{cases}
 \end{equation}
 and the outflow boundary conditions are specified at   two ends of the computational domain $[0,1]$.

\begin{figure}[htbp]
  \centering
  {\includegraphics[width=0.48\textwidth]{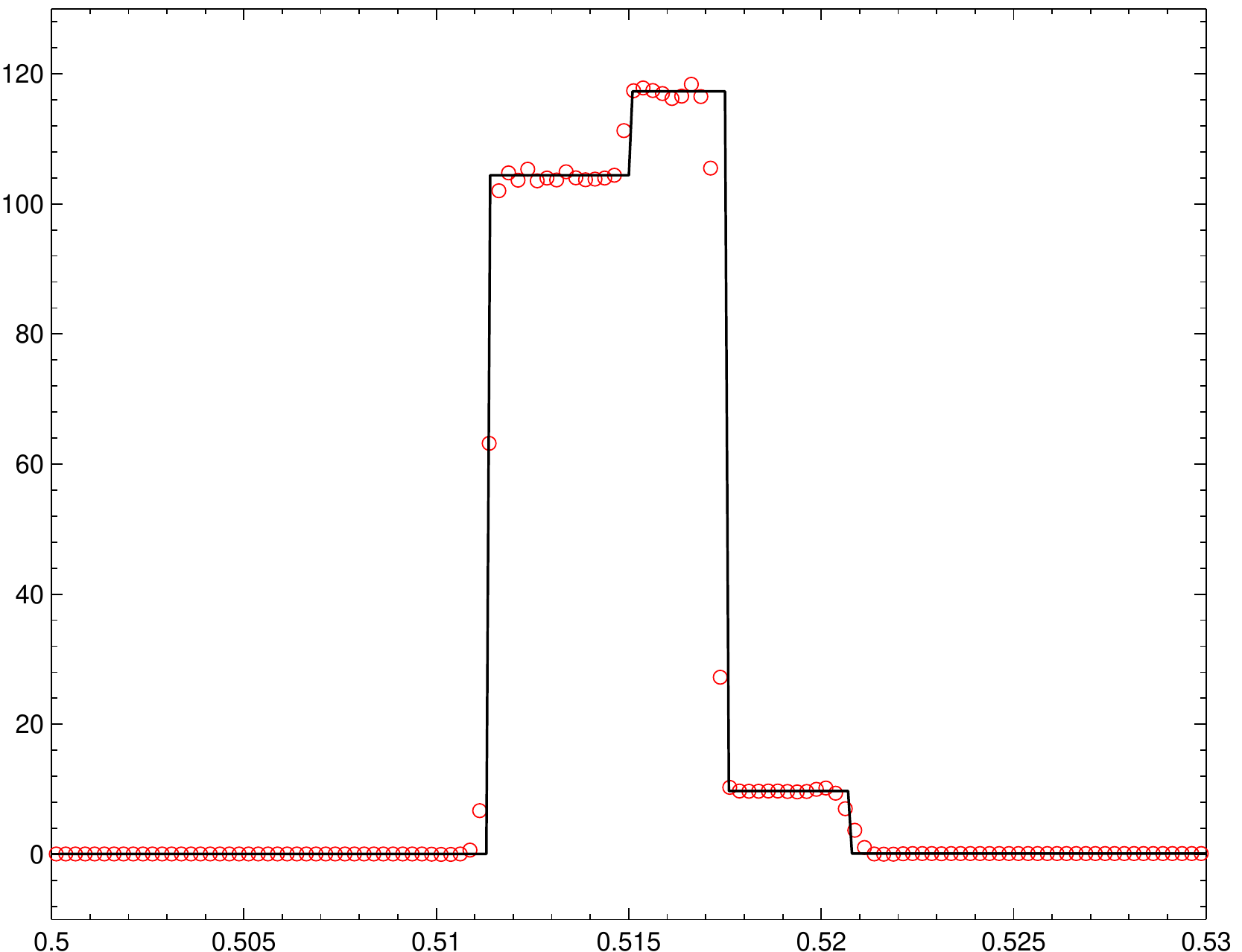}}
  {\includegraphics[width=0.48\textwidth]{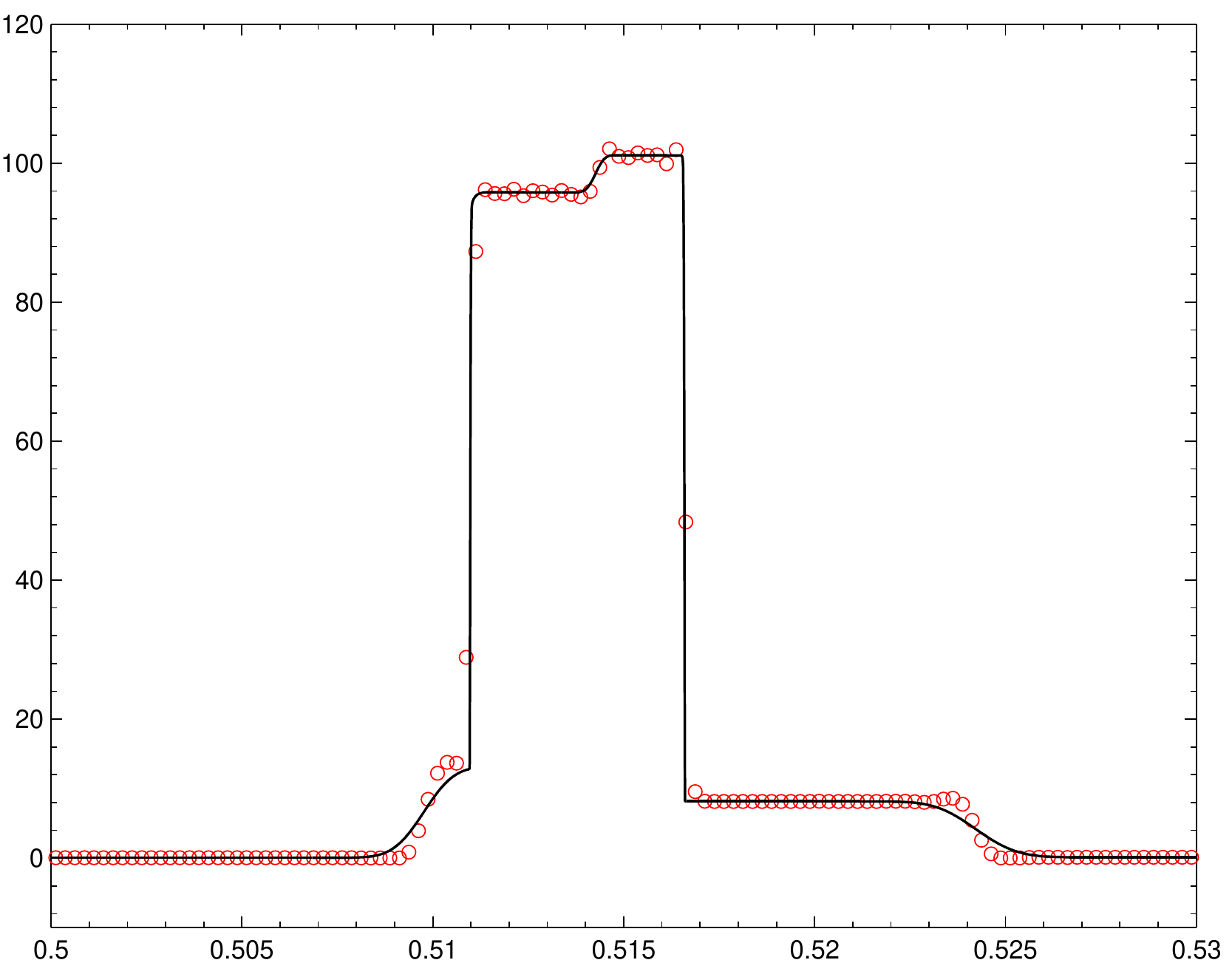}}
  {\includegraphics[width=0.48\textwidth]{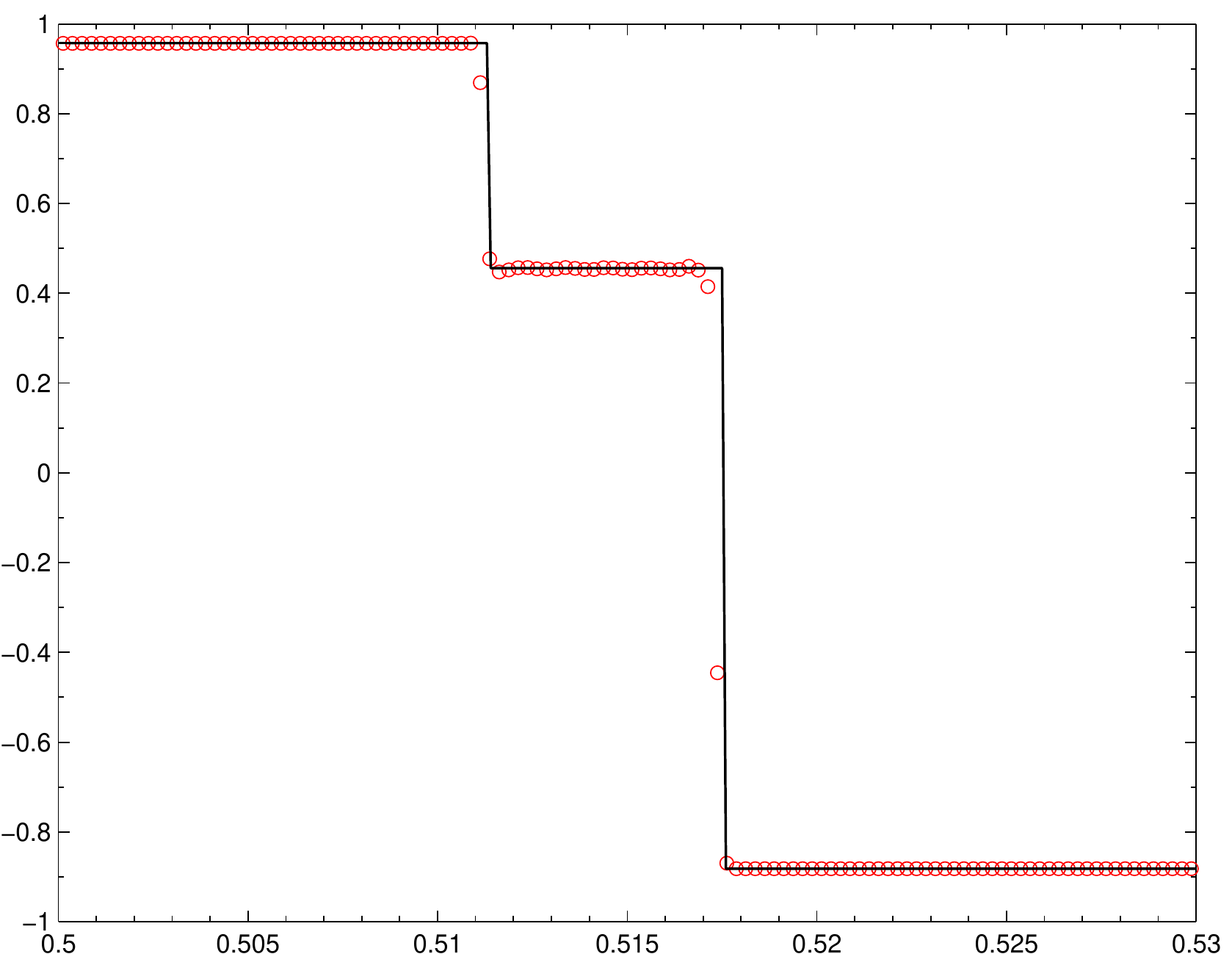}}
  {\includegraphics[width=0.48\textwidth]{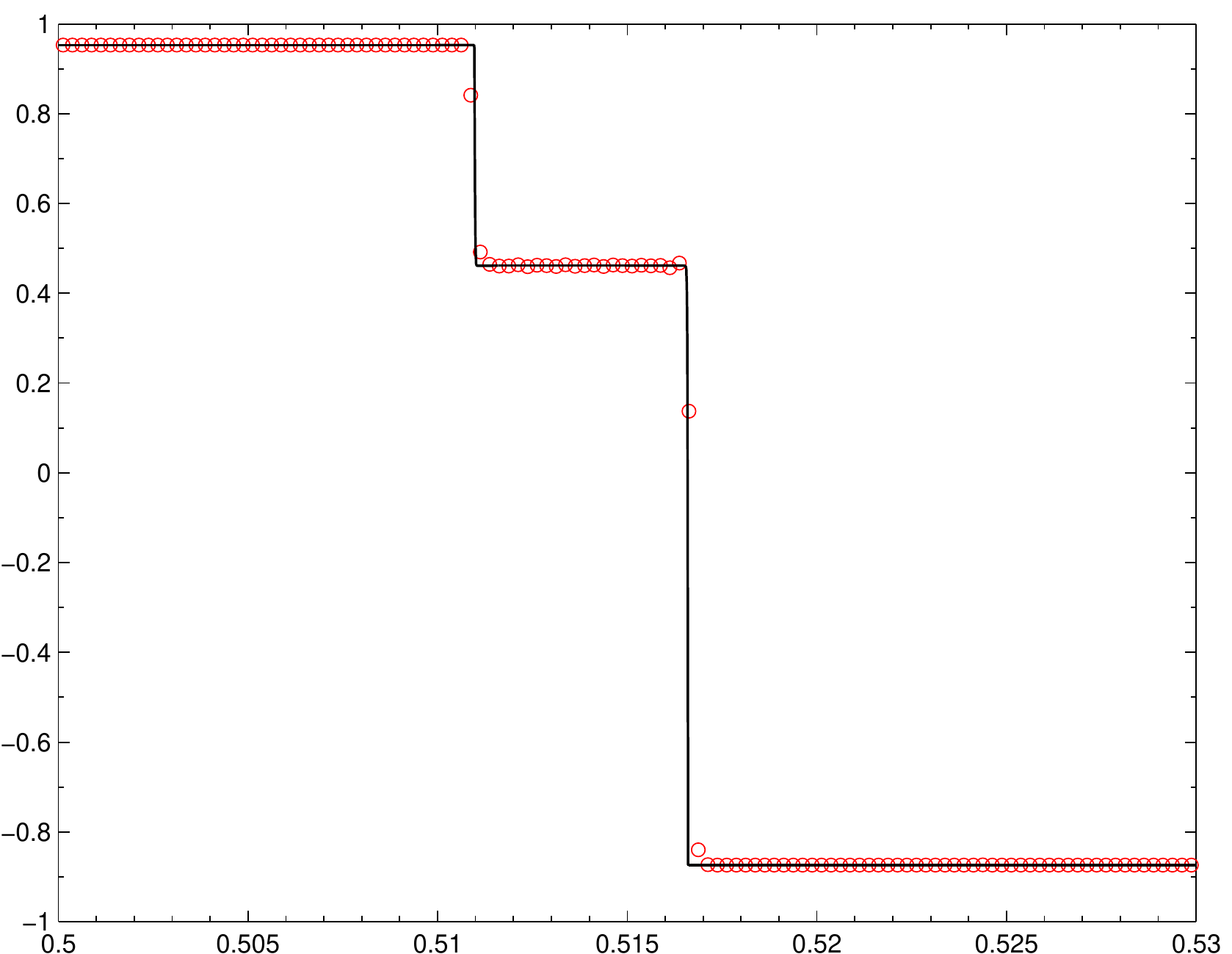}}
  {\includegraphics[width=0.48\textwidth]{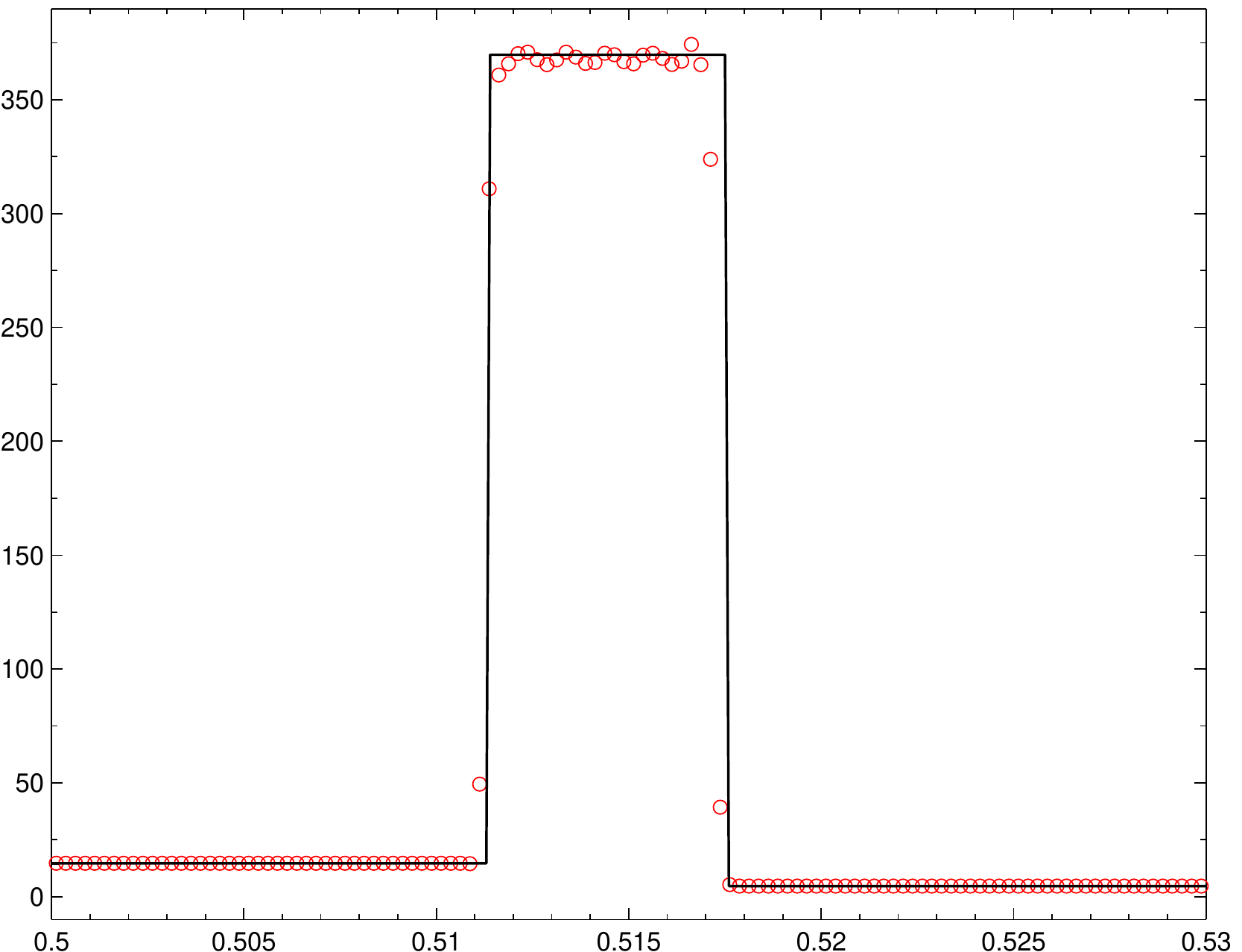}}
  {\includegraphics[width=0.48\textwidth]{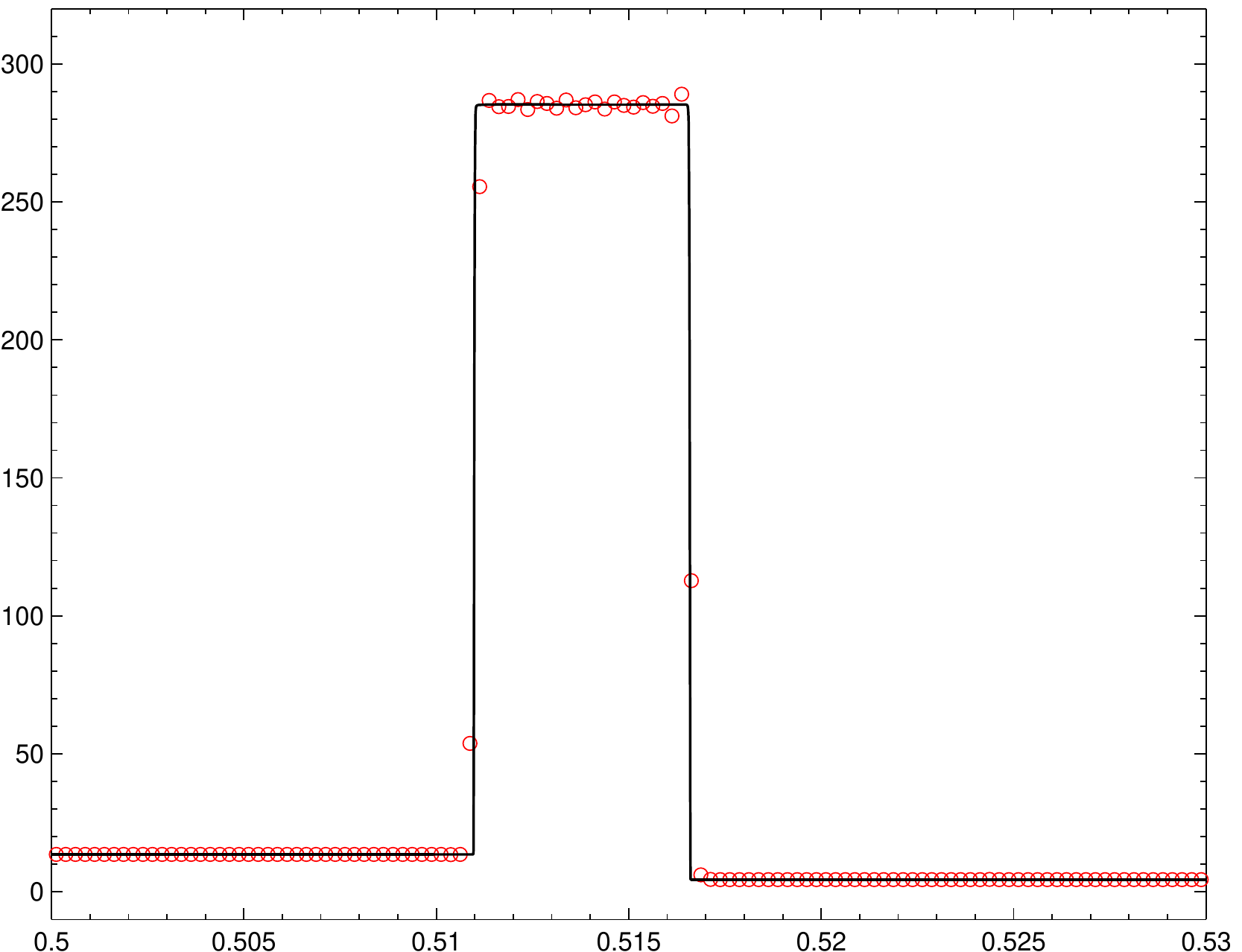}}
  \caption{\small Example \ref{exampleBW}: Close-up of the numerical solutions at $t=0.43$ obtained by using  {\tt PCPMSCDGP2} (``{$\circ$}'') with 4000 uniform cells. Left: ideal EOS \eqref{eq:EOSideal} with $\Gamma=1.4$; right: EOS \eqref{eq:EOS:Mathews}.
 }
  \label{fig:BWI}
\end{figure}


Fig.~\ref{fig:BWI} gives close-up of the solutions at $t=0.43$ obtained by using   {\tt PCPMSCDGP2}  (``{$\circ$}'')
with 4000 uniform cells within the domain $[0,1]$, where the solid lines denote the exact solutions
for  the ideal EOS \eqref{eq:EOSideal} with $\Gamma=1.4$, see \citep{Marti3},
and the reference solutions for the EOS \eqref{eq:EOS:Mathews} obtained
by using the Lax-Friedrichs scheme over a very fine mesh of $400000$ uniform cells.
It is found that  there are
two shock waves and two contact discontinuities
in the solutions at  $t=0.43$ within the interval $[0.5,0.53]$
since both initial discontinuities evolve and two blast waves collide each other;
and  
the proposed central DG methods  may well resolve
those discontinuities and clearly capture the complex relativistic wave configuration
except for small oscillations  between the left shock wave and  contact discontinuity.
The oscillations may be suppressed by locally using
the nonlinear limiter, e.g. the WENO limiter \citep{QiuWENOlimiter,Zhao},
see Fig. \ref{fig:BWIWENO}.  

\begin{figure}[htbp]
  \centering
  {\includegraphics[width=0.48\textwidth]{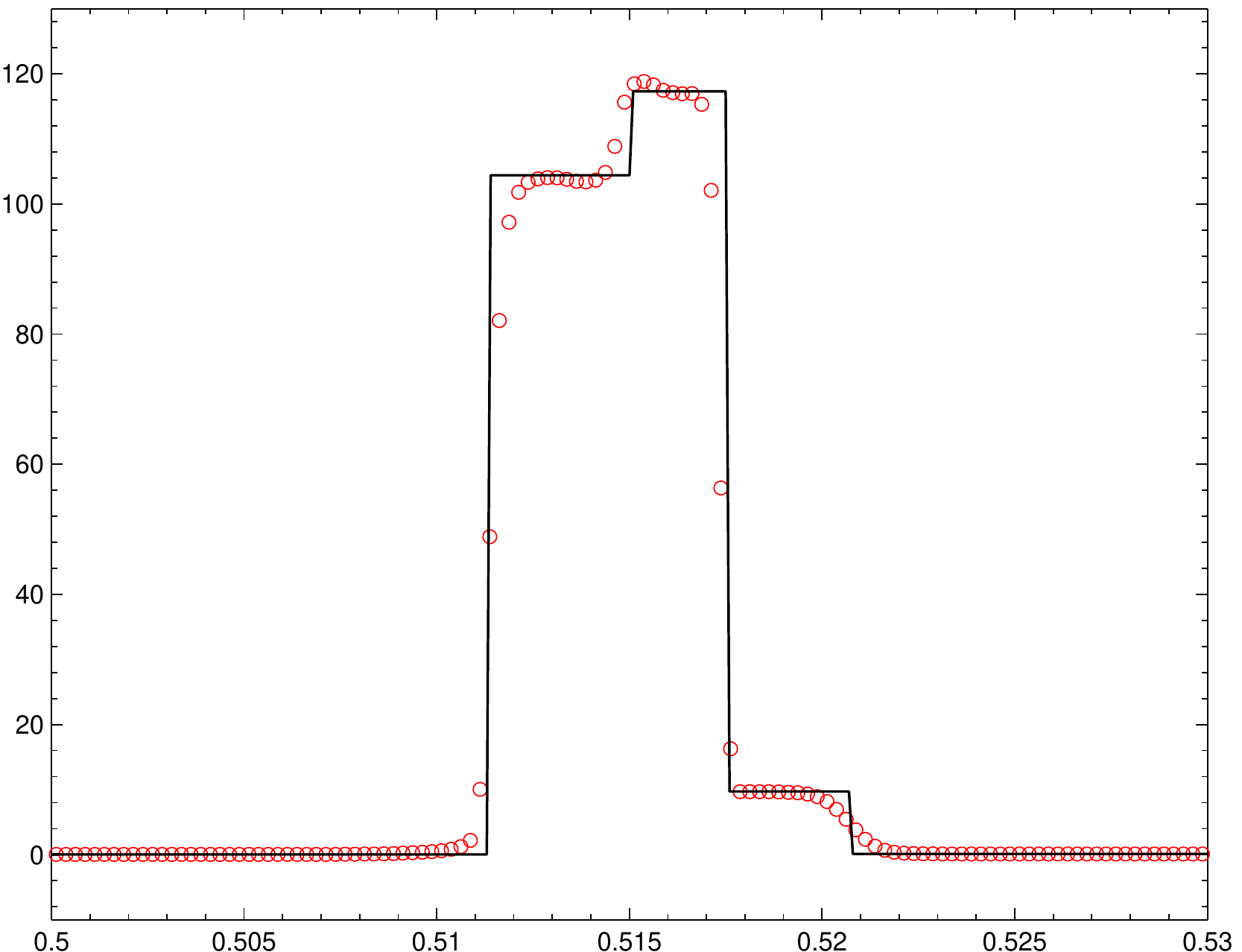}}
  {\includegraphics[width=0.48\textwidth]{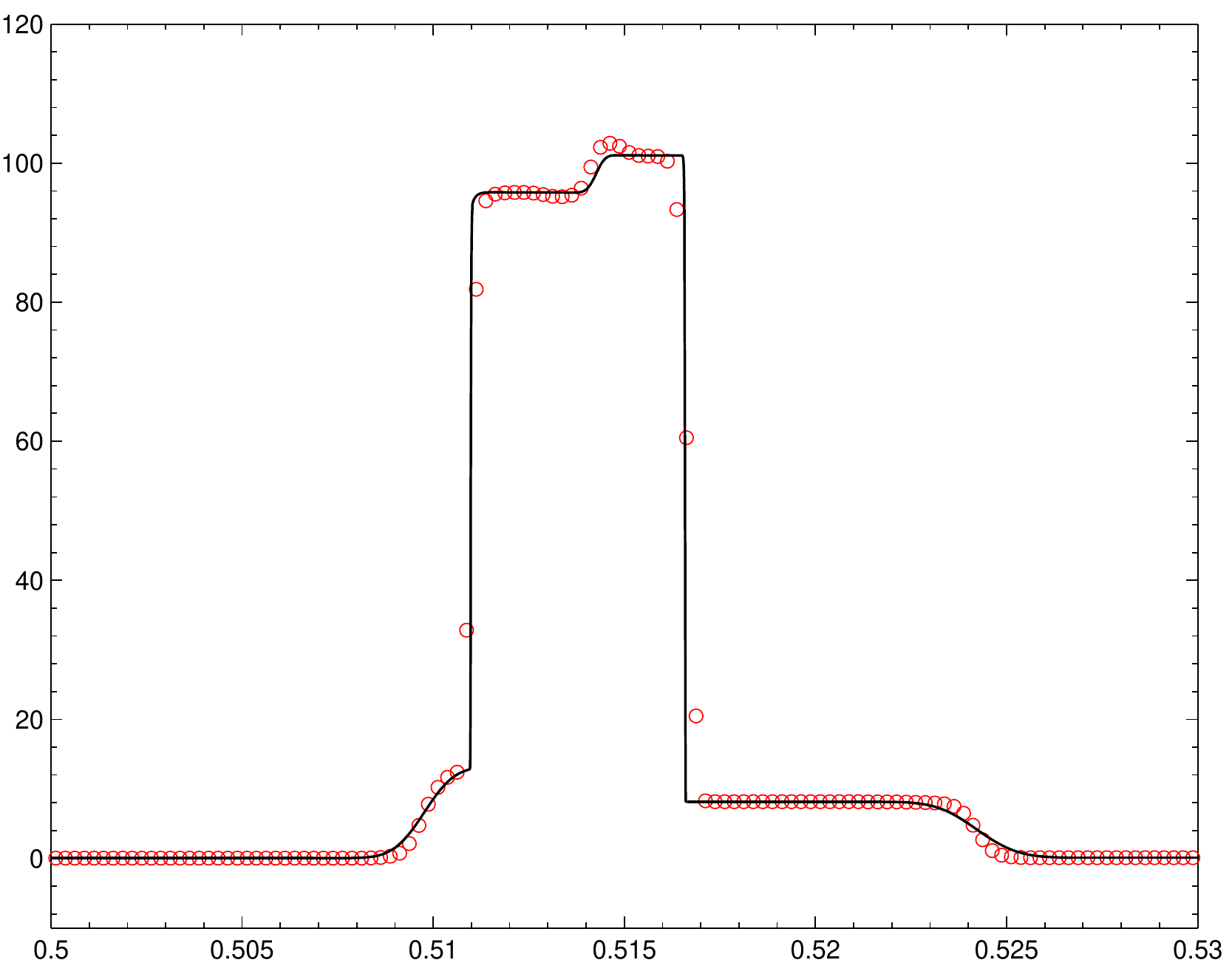}}
  {\includegraphics[width=0.48\textwidth]{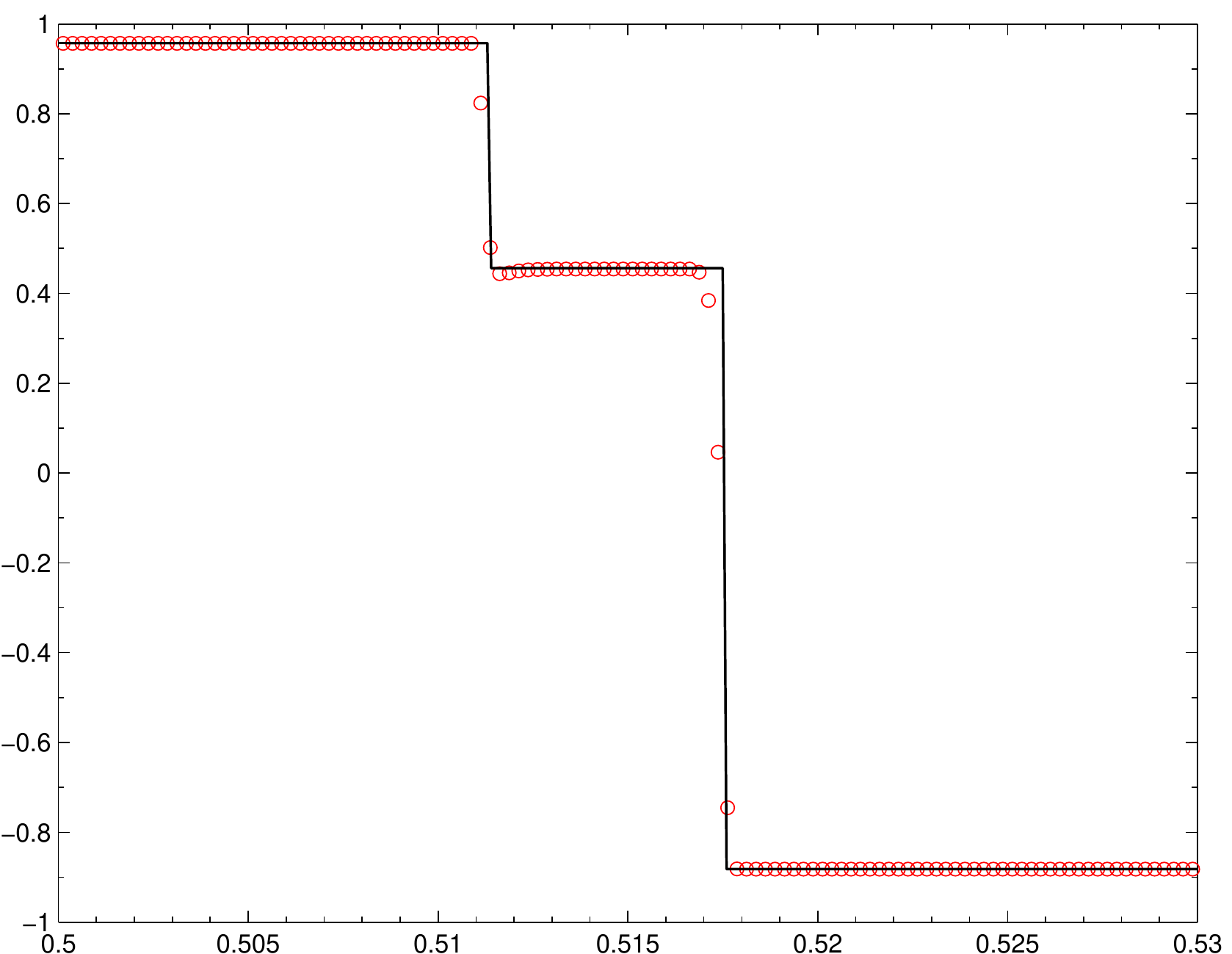}}
  {\includegraphics[width=0.48\textwidth]{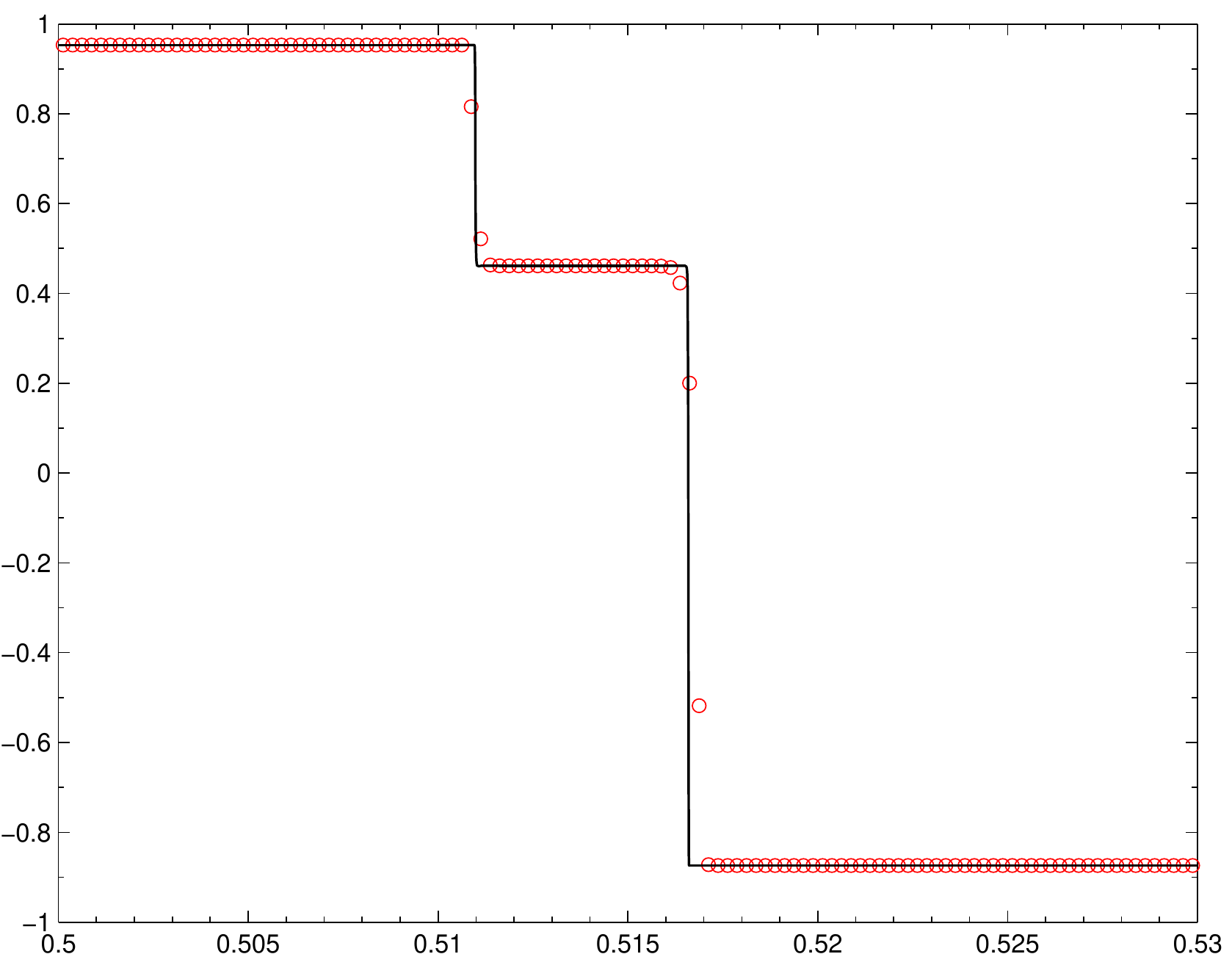}}
  {\includegraphics[width=0.48\textwidth]{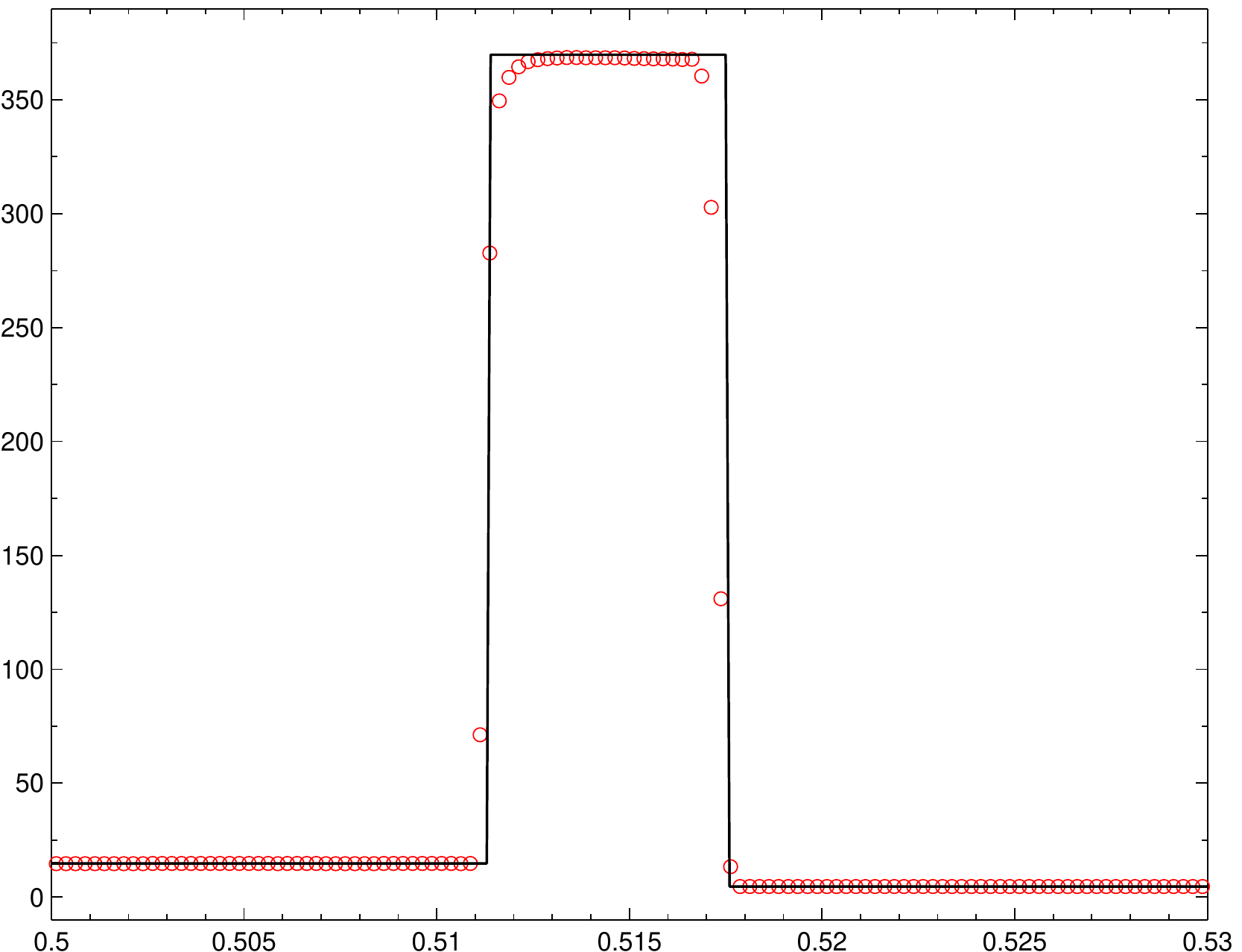}}
  {\includegraphics[width=0.48\textwidth]{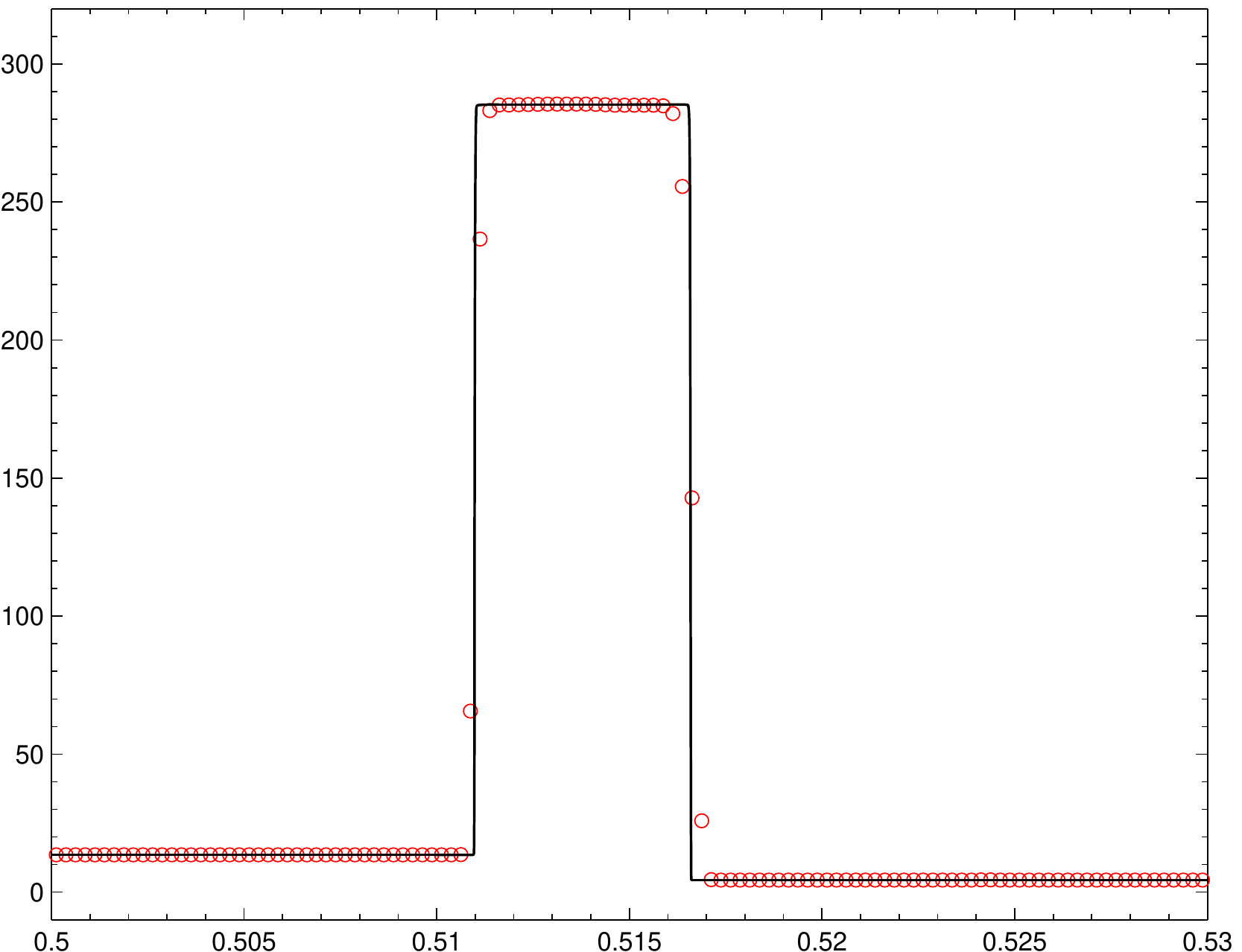}}
  \caption{\small Same as Fig. \ref{fig:BWI} except for locally using the WENO limiter.
 }
  \label{fig:BWIWENO}
\end{figure}

\end{example}

\subsection{2D case}

This section is to  conduct five 2D numerical experiments
on a smooth problem, two Riemann problems, and two relativistic jet flows.
Because the strong shock waves as well as their interaction appear in the last four problems,
the WENO limiter will be  implemented prior to the PCP limiting procedure
with the aid of the local characteristic decomposition  \citep{Zhao}.
Besides it may suppress spurious oscillations,
it can enhance the numerical  stability of  high-order accurate (central) DG methods.
Specially, when the WENO limiter is locally used,
 a larger time stepsize is allowed.
In all computations, the time stepsize  $\Delta t$
will be taken  as
 $ \frac{ {\varpi} \theta} {2c\left( 1/\Delta x + 1/\Delta y \right)}$
 with $\varpi = \hat{\omega}_1=\frac16$ for the first problem and  $ \varpi =1$
 for other problems.

\begin{example}[2D smooth problem] \label{example2Dsmooth}\rm
Similar to Example \ref{example1Dsmooth},
this smooth problem  is
used to check the accuracy of  proposed 2D PCP central DG methods.
 The initial data are taken as
$$
	\vec V(0,x,y)=\big( 1+0.99999\sin(2 \pi (x+y) ), 0.99/\sqrt{2},0.99/\sqrt{2}, 10^{-2}\big)^{\rm T},
$$
so that the exact solutions are
$$
	\vec V(t,x,y)=\big( 1+0.99999\sin(2 \pi (x+y-0.99 \sqrt{2} t), 0.99/\sqrt{2},0.99/\sqrt{2}, 10^{-2}\big)^{\rm T},
$$
which describe a RHD sine wave propagating periodically in the domain $\Omega=[0,1] \times [0,1]$
at an angle $45^\circ$ with the $x$-axis.
The  domain $\Omega$ is divided into $N\times N$ uniform cells and
the periodic boundary conditions are specified on $\partial \Omega$.

The ideal EOS \eqref{eq:EOSideal} with $\Gamma=\frac53$ is first considered.
Table \ref{tab:2Daccuracy}
lists  the $l^1$ and $l^2$-errors at $t=0.2$ and  corresponding orders
obtained  by using {\tt PCPRKCDGP2} and {\tt PCPMSCDGP2}, respectively.
The results show that the theoretical orders are obtained by both {\tt PCPRKCDGP2} and {\tt PCPMSCDGP2}
and the PCP limiting procedure does not destroy the accuracy.
Plots of numerical errors in Fig. \ref{fig:2Dsmooth} further validate  the accuracy of
 both {\tt PCPRKCDGP2} and {\tt PCPMSCDGP2}  for the general EOS.

\begin{table}[htbp]
  \centering 
    \caption{\small Example \ref{example2Dsmooth}: Numerical  $l^1$- and $l^2$-errors
    	and orders   at $t=0.2$ of  {\tt PCPRKCDGP2} and {\tt PCPMSCDGP2} for the
    	ideal EOS with $\Gamma=5/3$.
  }
\begin{tabular}{|c||c|c|c|c||c|c|c|c|}
  \hline
\multirow{2}{8pt}{$N$}
 &\multicolumn{4}{c||}  {\tt PCPRKCDGP2}  &\multicolumn{4}{c|}  {\tt PCPMSCDGP2} \\
 \cline{2-9}
 &$l^1$ $ {\mbox{error}}$ &$l^1$ order &$l^2$ error &$l^2$ order  &$l^1$ error &$l^1$ order &$l^2$ error &$l^2$ order \\
 \hline
10 & 2.462e-3 & --     & 3.091e-3   &--    & 2.456e-3 & --    & 3.083e-3 &--\\
20& 2.573e-4 & 3.26 & 3.446e-4   & 3.17   & 2.568e-4 & 3.26 & 3.442e-4 & 3.16 \\
40& 3.131e-5 & 3.04  & 4.261e-5   & 3.02  & 3.054e-5 & 3.07 & 4.227e-5 & 3.03 \\
80& 3.785e-6 & 3.05  & 5.278e-6    & 3.01 & 3.769e-6 & 3.02 & 5.276e-6 &3.00\\
160& 4.707e-7 & 3.01 & 6.594e-7  & 3.00 & 4.707e-7 & 3.00 & 6.594e-7 &3.00\\
\hline
\end{tabular}\label{tab:2Daccuracy}
\end{table}

\begin{figure}[htbp]
  \centering
    \subfigure[EOS \eqref{eq:EOS:Mathews}]
    {\includegraphics[width=0.32\textwidth]{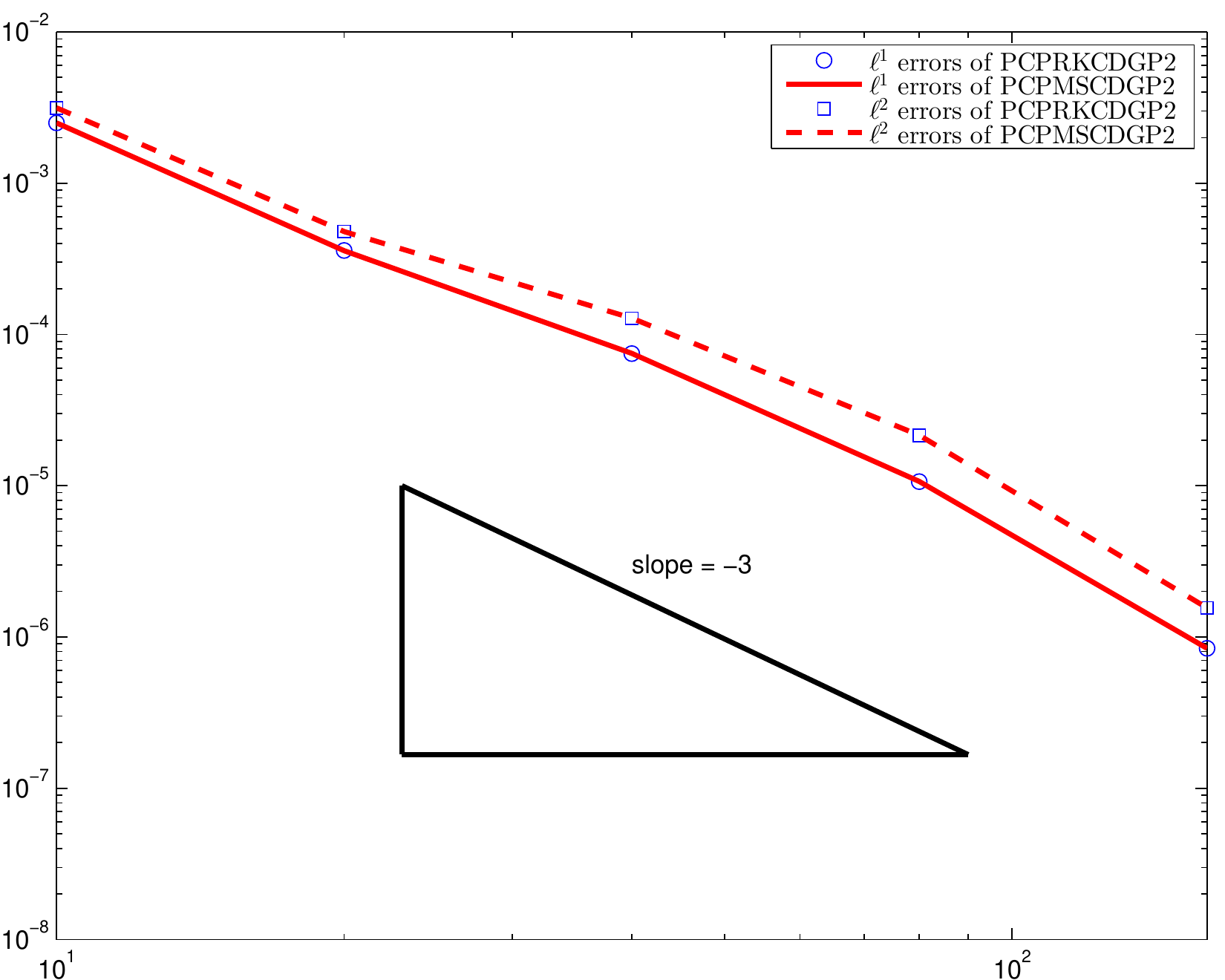}}
  \subfigure[EOS \eqref{eq:EOS:Sokolov}]
  {\includegraphics[width=0.32\textwidth]{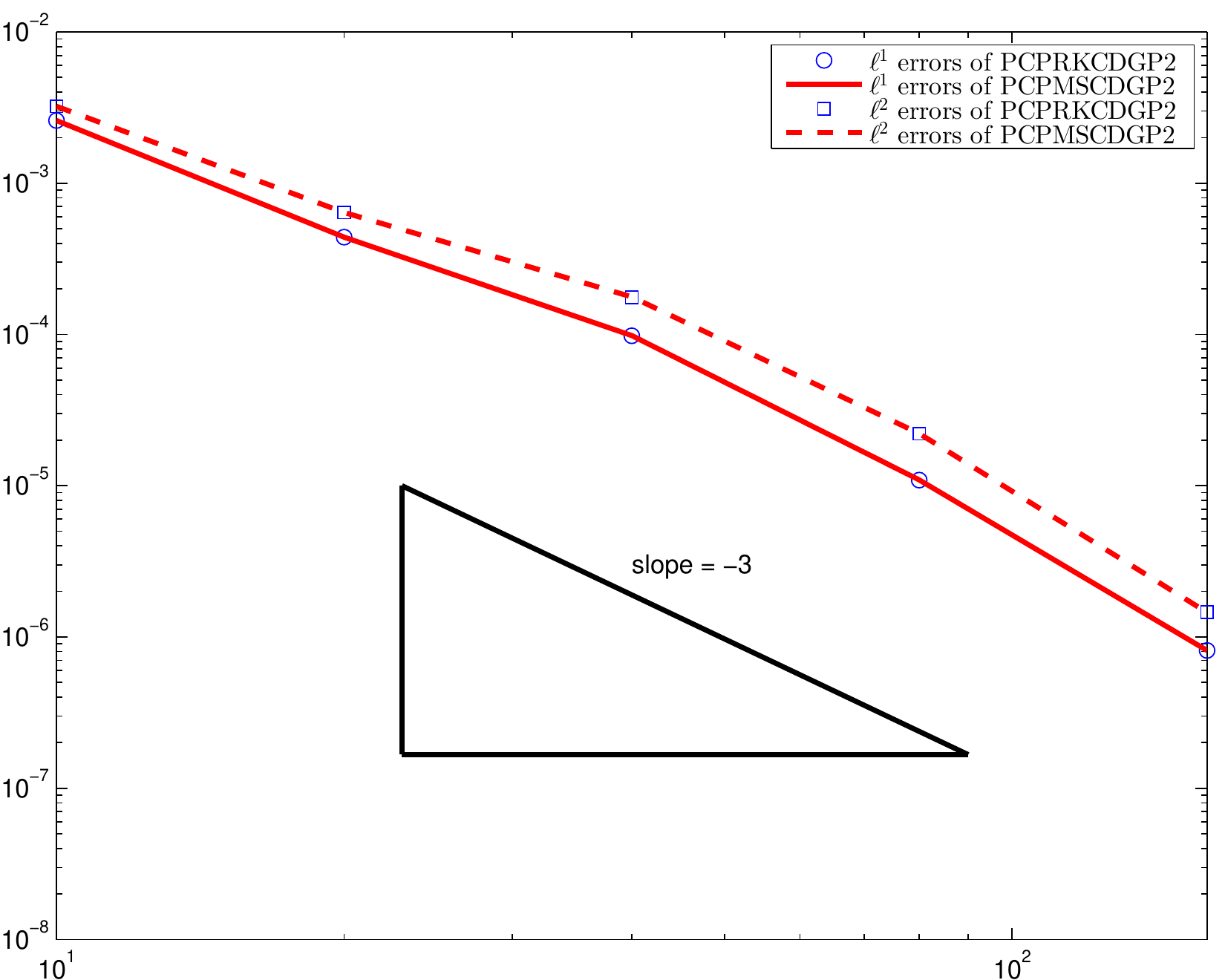}}
  \subfigure[EOS \eqref{eq:EOS:Ryu}]
  {\includegraphics[width=0.32\textwidth]{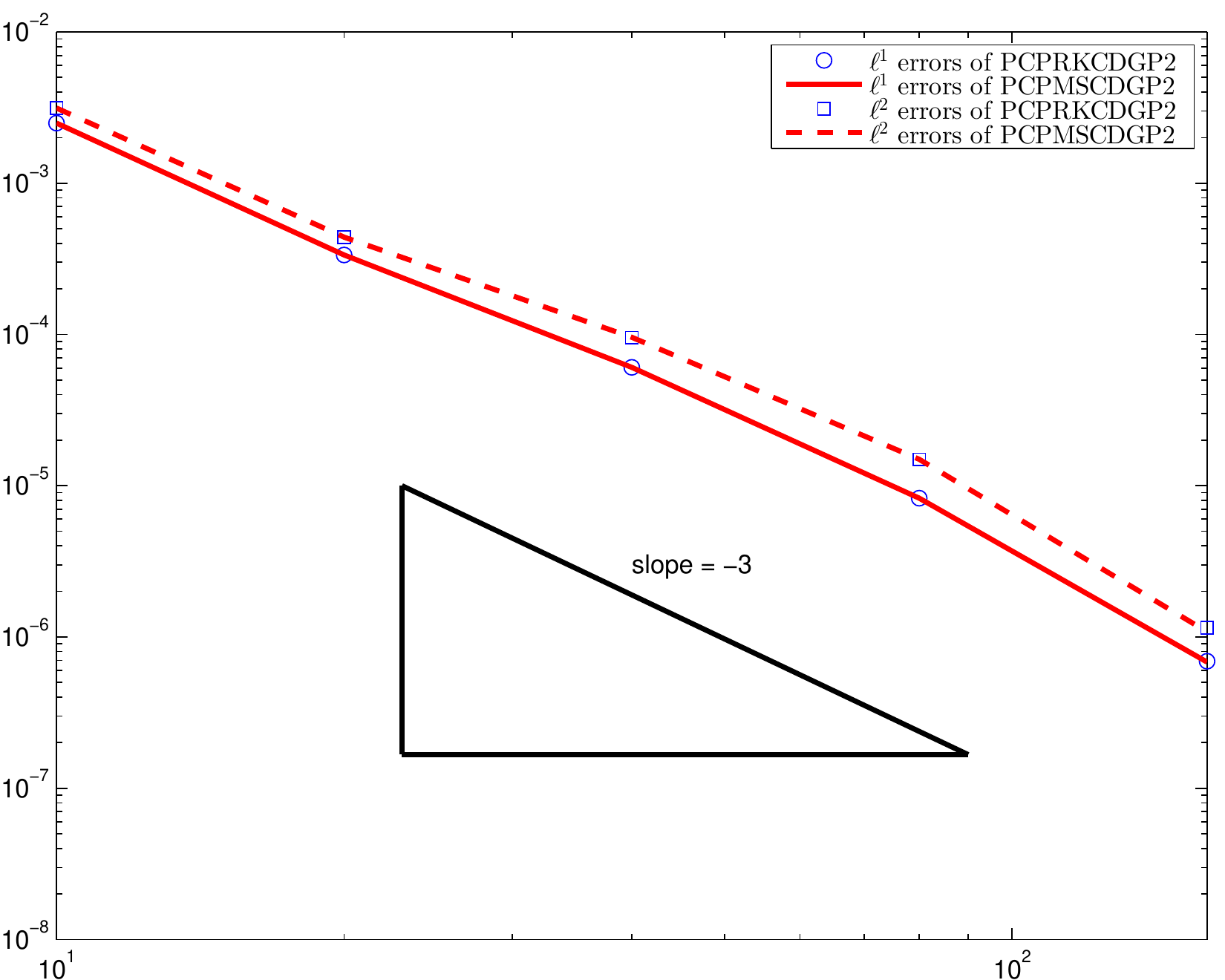}}
  \caption{\small Example \ref{example2Dsmooth}: Numerical  $l^1$- and $l^2$-errors  at $t=0.2$
  	of  {\tt PCPRKCDGP2} and {\tt PCPMSCDGP2}.
 }
  \label{fig:2Dsmooth}
\end{figure}

\end{example}

%

\begin{example}[2D Riemann problems] \label{example2DRPs}\rm
Initial data of two Riemann problems of 2D RHD equations  \eqref{eqn:coneqn3d} considered here comprise four different constant states in the unit square $\Omega=[-1,1]\times[-1,1]$, while initial discontinuities parallel to both coordinate axes respectively.   In our computations,
the uniform mesh of $400 \times 400$ cells is used,
the output time is set as $0.8$, and
$\Gamma=\frac53$ in  the ideal EOS.
Moreover, it is also necessary for  the successful  performance of  the high-order accurate central DG methods
to use the PCP limiting procedure.

The initial data of the first Riemann problem \citep{ZannaBucciantini:2002,Lucas-Serrano2004} are
$$\vec V(0,x,y)=
\begin{cases}(0.1,0,0,0.01)^{\rm T},& x>0,y>0,\\
  (0.1,0.99,0,1)^{\rm T},&    x<0,y>0,\\
  (0.5,0,0,1)^{\rm T},&      x<0,y<0,\\
  (0.1,0,0.99,1)^{\rm T},&    x>0,y<0,
  \end{cases}$$
where both the left and lower discontinuities are the contact waves with a jump in the transverse velocity
and rest-mass density, while both the right and upper  are non-simple waves.

Fig.~\ref{fig:2DRP1} gives the contours of the density logarithm $\ln \rho$  obtained by using
{\tt PCPMSCDGP2} for the ideal EOS \eqref{eq:EOSideal} and the EOS \eqref{eq:EOS:Sokolov}.
The results obtained by {\tt PCPRKCDGP2} are omitted here and hereafter because they very similar to {\tt PCPMSCDGP2}.
It is found that four initial discontinuities interact each other
and form two reflected curved shock waves,
an elongated jet-like spike approximately between two points (0.4,0.4) and (0.8,0.8) on the line
$x=y$ when $t = 0.8$,
and a complex mushroom structure starting from the point (0,0)
and  expanding to the bottom-left region;
{\tt PCPMSCDGP2} exhibits good robustness and well captures those complex
wave configurations;
 the results  for the ideal EOS case agrees well with those given
 by the high-order accurate PCP finite difference WENO
in \citep{WuTang2015};
the wave configurations depend on the EOS;
and  the velocities of the reflected curved shock waves in the case of EOS \eqref{eq:EOS:Sokolov}
are smaller than the ideal EOS.
It is worth mentioning that the high-order accurate
central DG methods  fail in the first time step
if the PCP limiting procedure is not employed.

\begin{figure}[htbp]
  \centering
  {\includegraphics[width=0.48\textwidth]{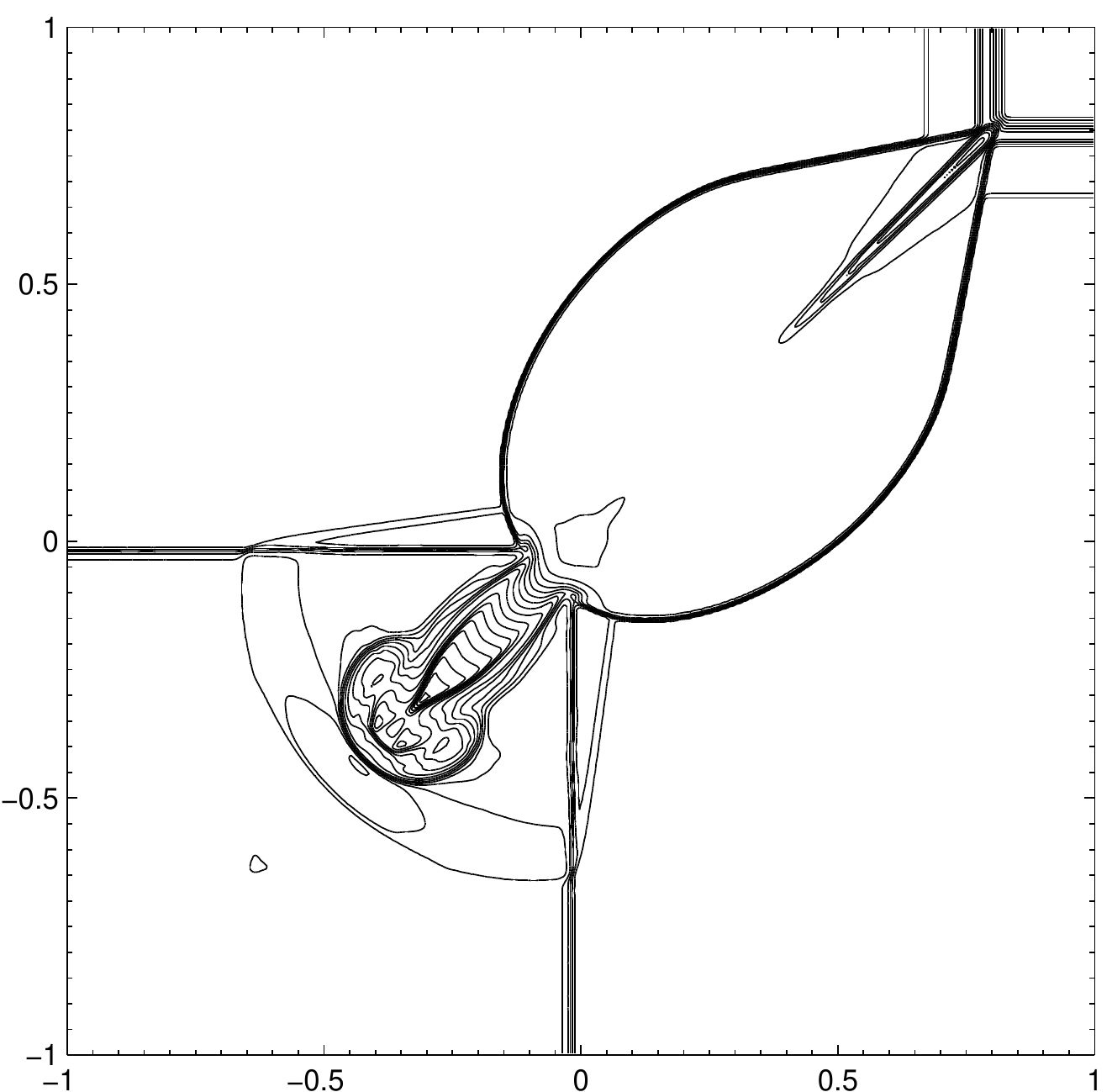}}
  {\includegraphics[width=0.48\textwidth]{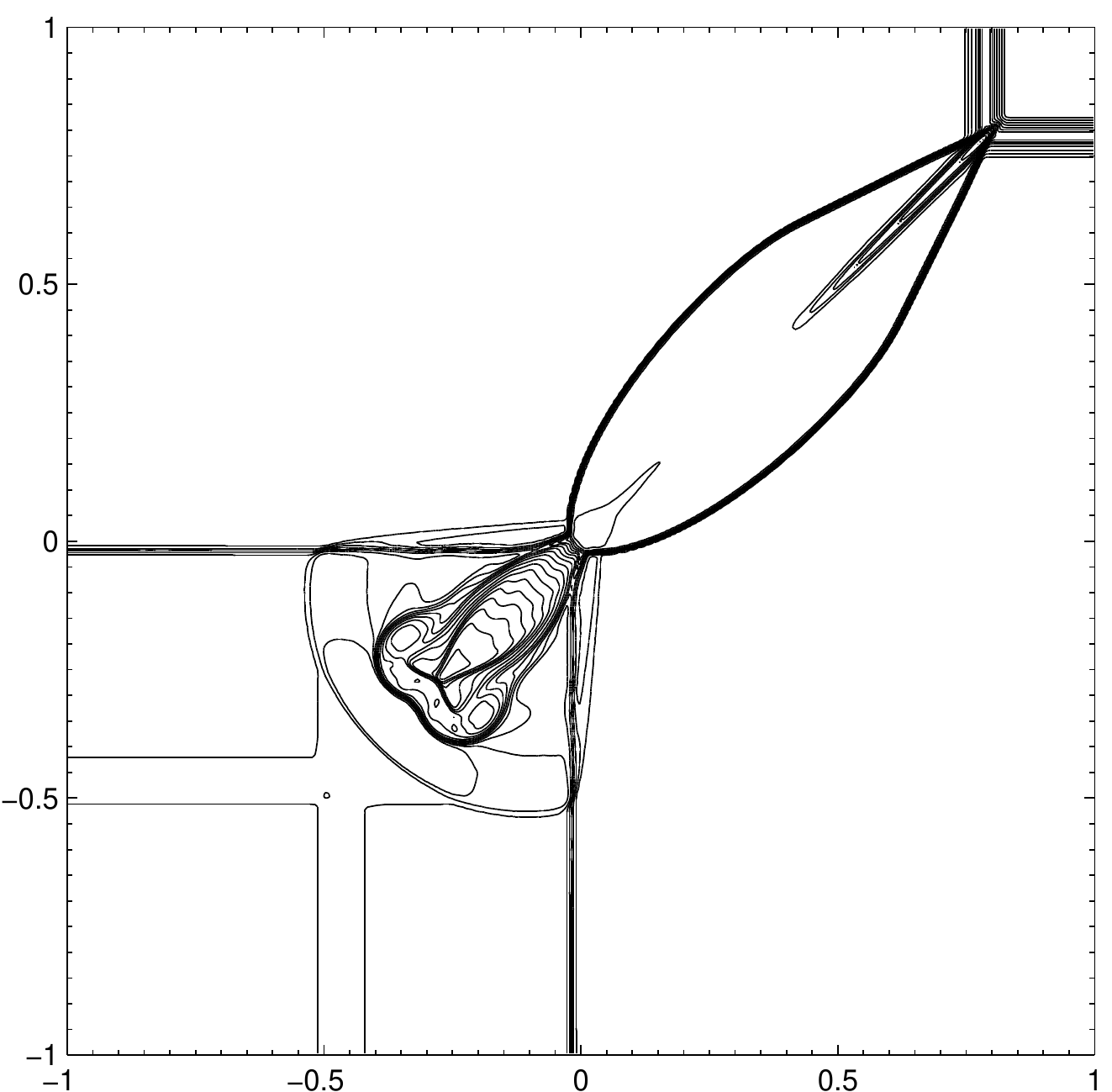}}
  \caption{\small The first 2D Riemann problem in Example \ref{example2DRPs}:
  The contours of  density
       logarithm $\ln \rho$ at $t=0.8$  obtained by using {\tt PCPMSCDGP2}.
         25 equally spaced contour lines are used.
         Left: ideal EOS \eqref{eq:EOSideal} with $\Gamma=5/3$; right: EOS \eqref{eq:EOS:Sokolov}.
 }
  \label{fig:2DRP1}
\end{figure}

The initial data of the second 2D Riemann problem \citep{WuTang2015} are
$$\vec V(0,x,y)=
\begin{cases}(0.1,0,0,20)^T,& x>0.5,y>0.5,\\
  (0.00414329639576,0.9946418833556542,0,0.05)^{\rm T},&    x<0.5,y>0.5,\\
  (0.01,0,0,0.05)^T,&      x<0.5,y<0.5,\\
  (0.00414329639576,0,0.9946418833556542, 0.05)^{\rm T},&    x>0.5,y<0.5,
  \end{cases}$$
  in which the left and lower  initial discontinuities are the contact discontinuities,
while the upper and right are the shock waves with the speed of $-0.66525606186639$ only
for  the ideal EOS.
In this test,   the EOS \eqref{eq:EOS:Mathews}  will also be considered
and the maximal value of the fluid velocity  becomes very close to the speed of light
 as the time increases.

\begin{figure}[htbp]
  \centering
  {\includegraphics[width=0.48\textwidth]{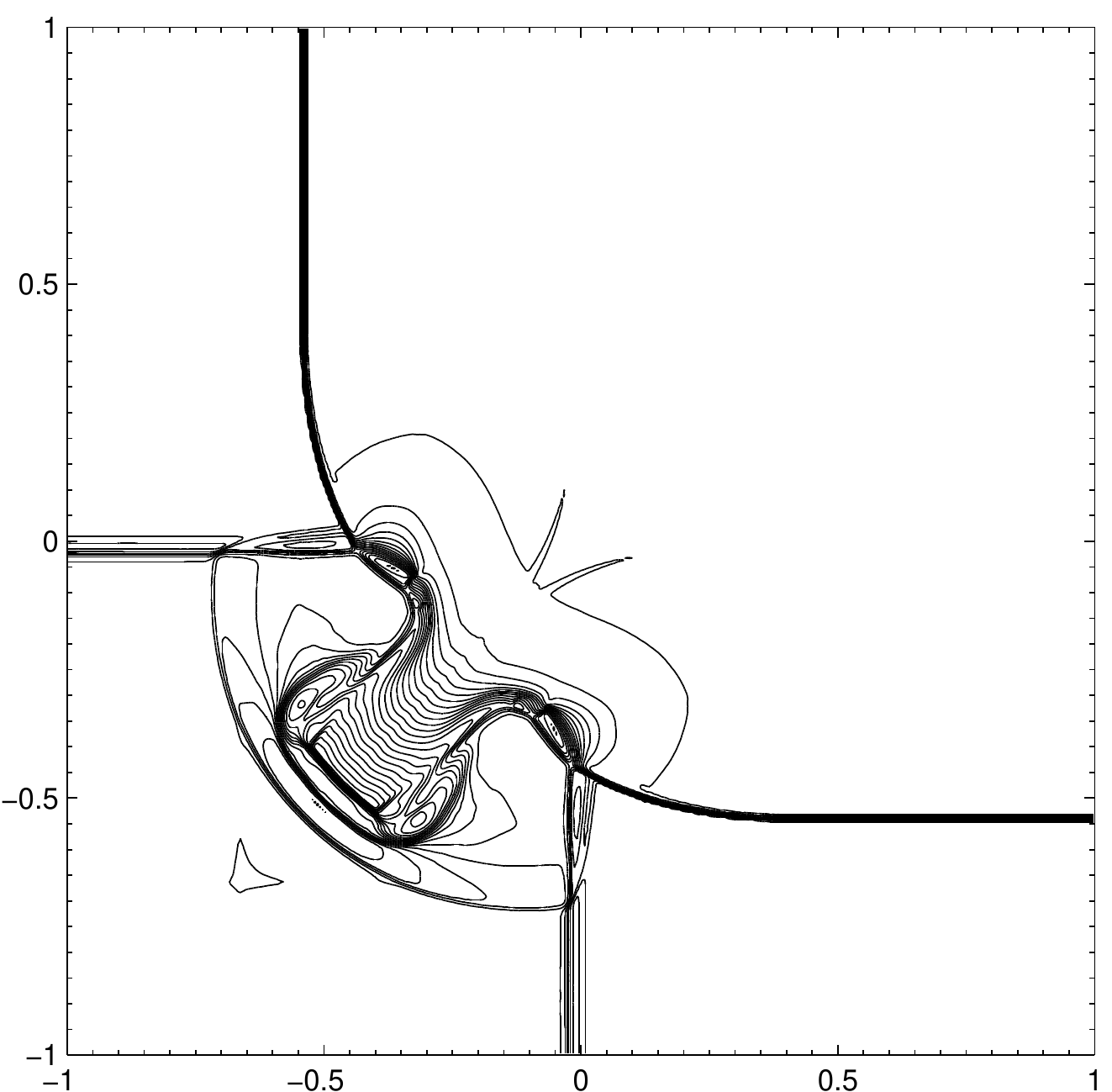}}
  {\includegraphics[width=0.48\textwidth]{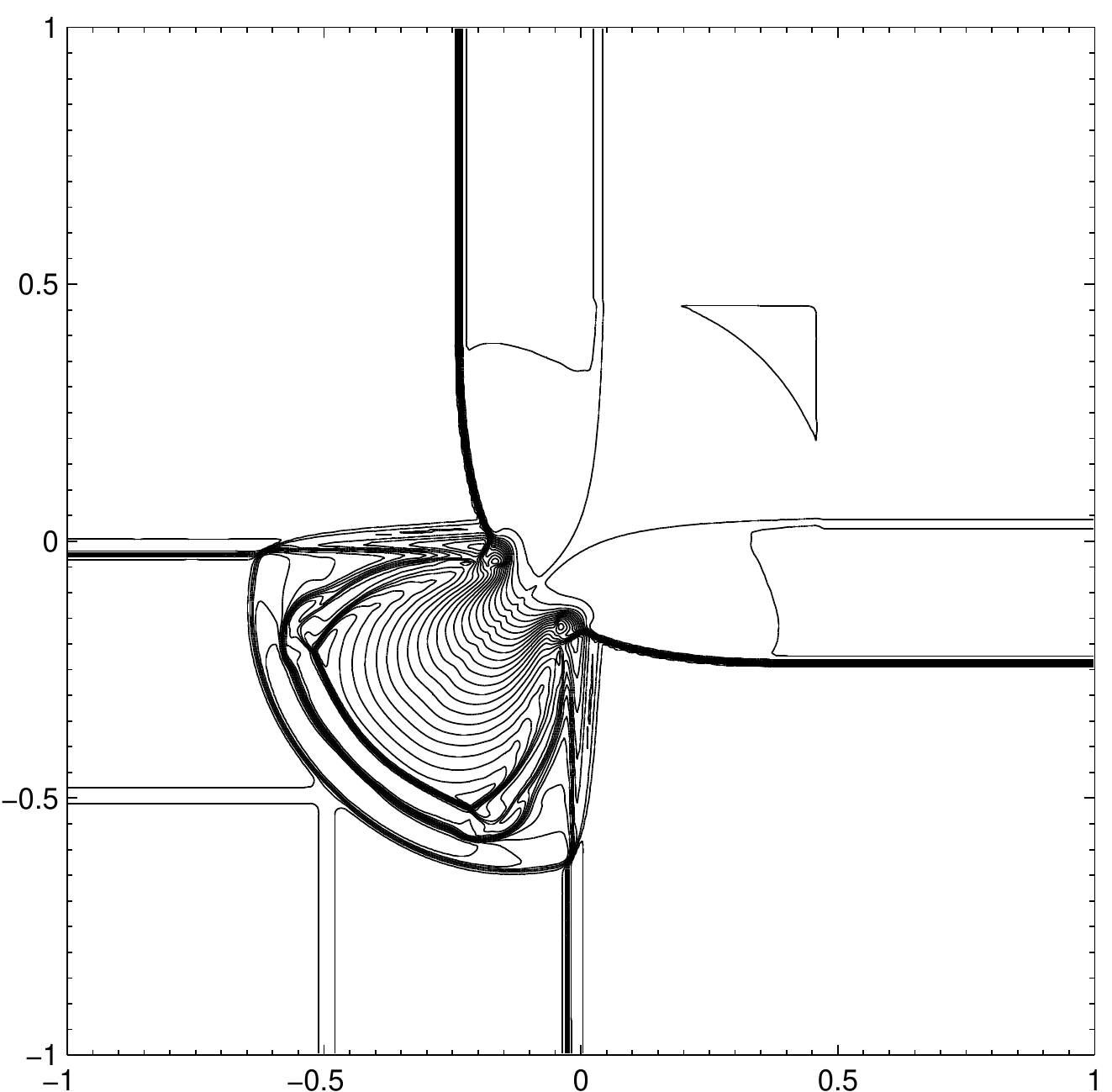}}
  \caption{\small The second 2D Riemann problem in Example \ref{example2DRPs}:
  The contours of  density
       logarithm $\ln \rho$ at $t=0.8$  obtained by using {\tt PCPMSCDGP2}
         25 equally spaced contour lines are used.
         Left: ideal EOS \eqref{eq:EOSideal} with $\Gamma=5/3$;
         right: EOS \eqref{eq:EOS:Mathews}.
 }
  \label{fig:2DRP2}
\end{figure}

Fig.~\ref{fig:2DRP2} displays the contours of the density logarithm $\ln \rho$
 obtained by using {\tt PCPMSCDGP2}.
It is seen that the
interaction of four initial discontinuities leads to the distortion of
the initial shock waves and the formation
of a ``mushroom cloud'' starting from the point $(0,0)$ and expanding to the left bottom region.
The present methods have good performance and  robustness
in simulating such ultra-relativistic flow.
The flow structures of ``mushroom cloud'' for the ideal EOS \eqref{eq:EOSideal}
and EOS \eqref{eq:EOS:Mathews} are obviously different,
and the former agrees well with that given in \citep{WuTang2015}
by high-order accurate PCP finite difference WENO schemes.

\end{example}

\begin{example}[Relativistic jets] \label{exampleJet}\rm
The last 2D example is to simulate two high-speed relativistic jet flows.
The jet flows with  high speed
are ubiquitous in the  extragalactic radio sources associated with the active galactic nuclei
and the most compelling case for a special relativistic phenomenon.
It is very challenging  to simulate such jet flows
since there may appear  the strong relativistic shock wave,  shear wave,  interface instabilities,
and the ultra-relativistic region, etc. besides the high-speed jet,
 see e.g. \citep{Marti1994,Duncan:1994,Marti1997,Komissarov1998,ZhangMacfadyen2006}.

\begin{figure}[htbp]
  \centering
  {\includegraphics[width=0.3\textwidth]{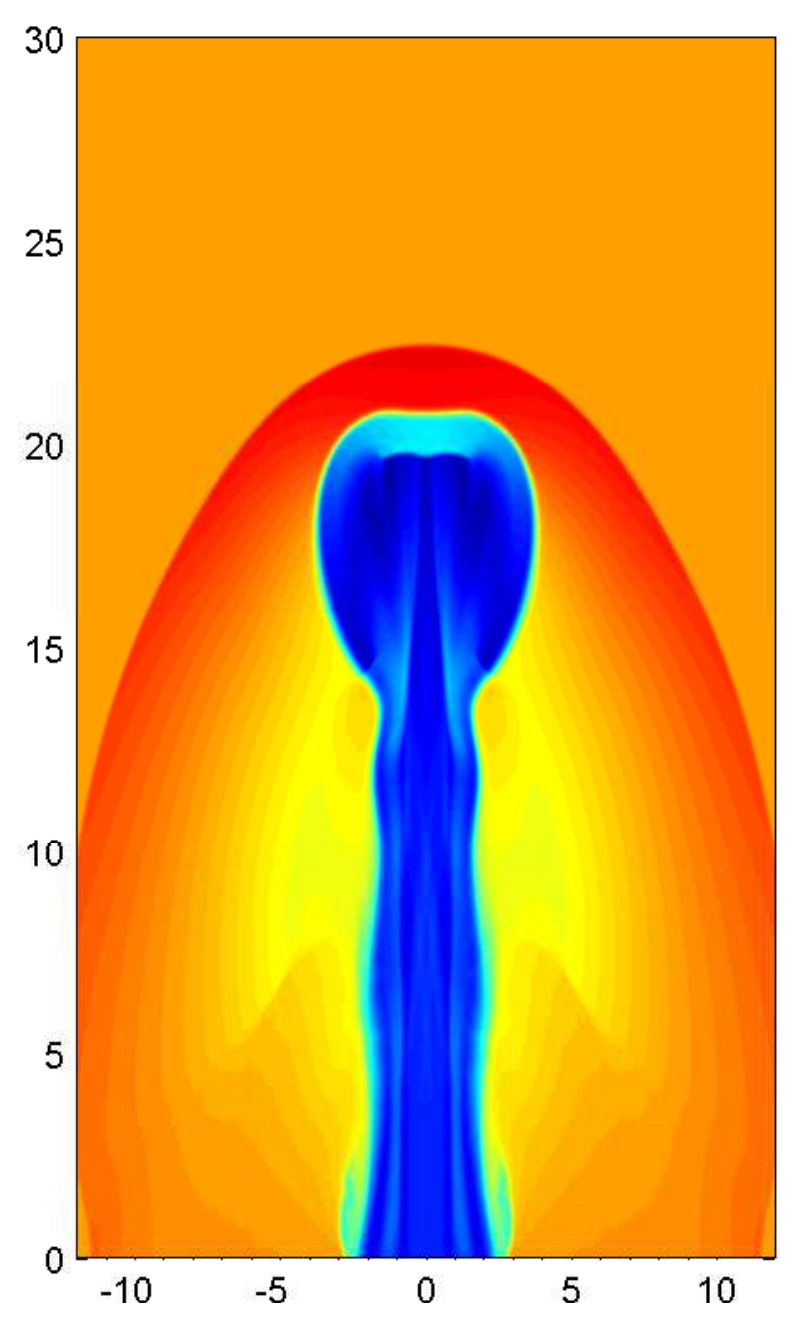}}
  {\includegraphics[width=0.3\textwidth]{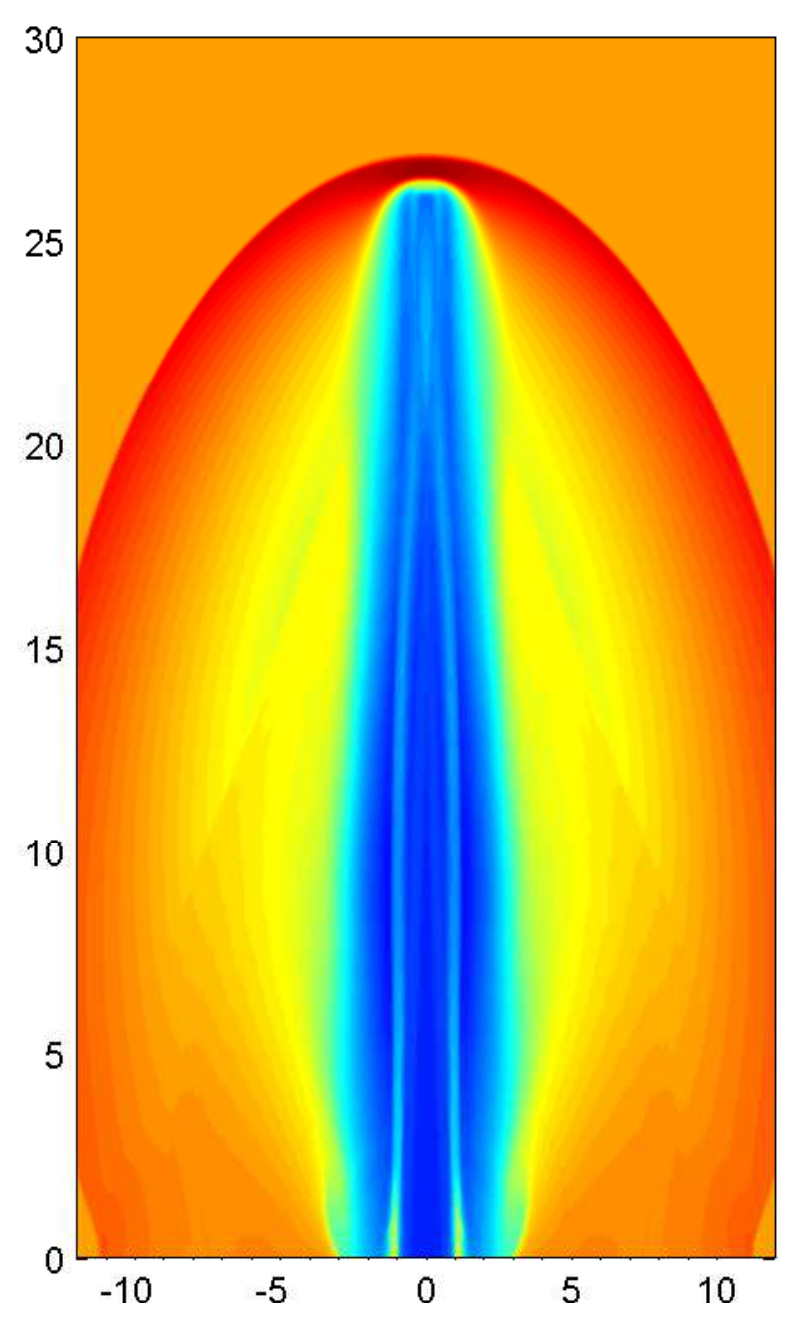}}
  {\includegraphics[width=0.3\textwidth]{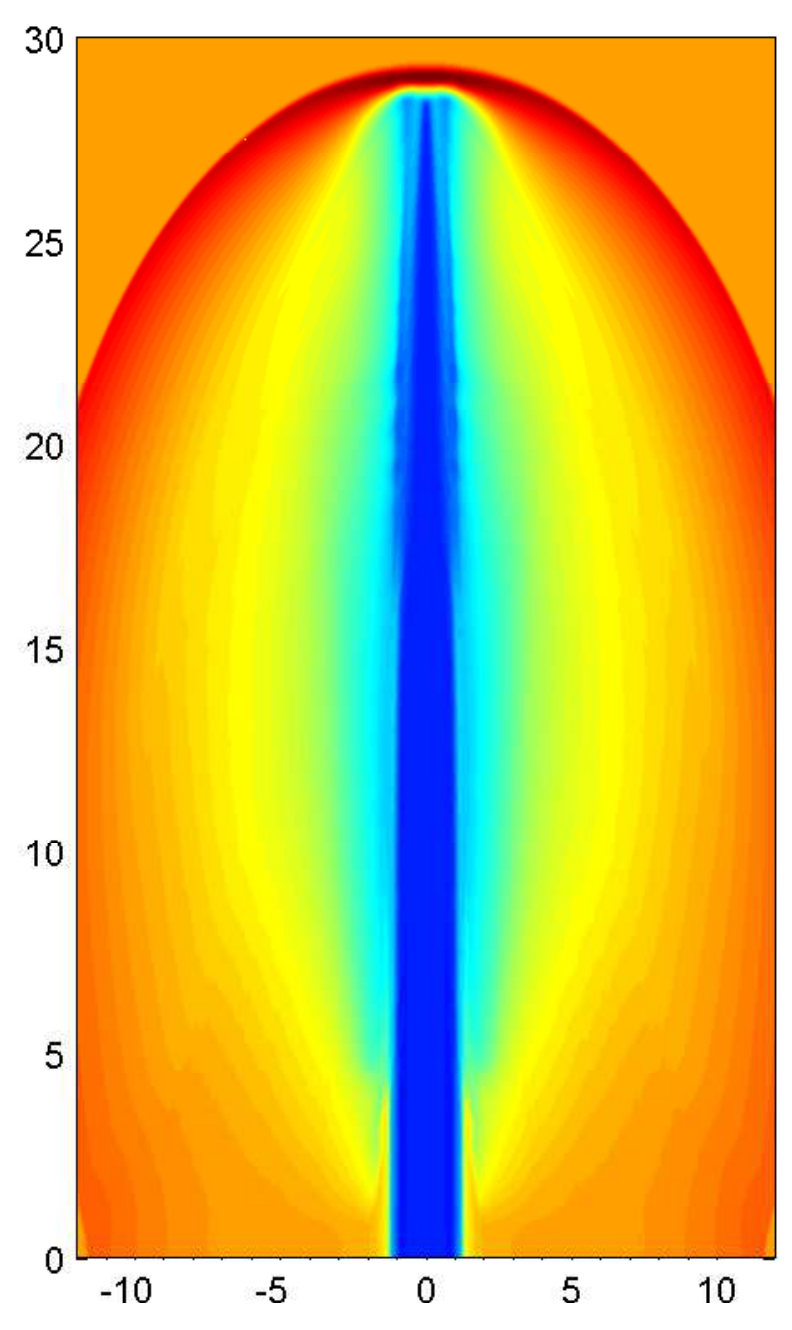}}
  \caption{\small Hot jet models in Example \ref{exampleJet}:
  Schlieren images of the rest-mass density logarithm
  $\ln \rho$ at $t=30$ obtained by {\tt PCPMSCDGP2}
        on the mesh of $240 \times 600$ uniform cells.
     From left to right: configurations (i), (ii), and (iii).
 }
  \label{fig:jetA1}
\end{figure}

\begin{figure}[htbp]
  \centering
  {\includegraphics[width=0.3\textwidth]{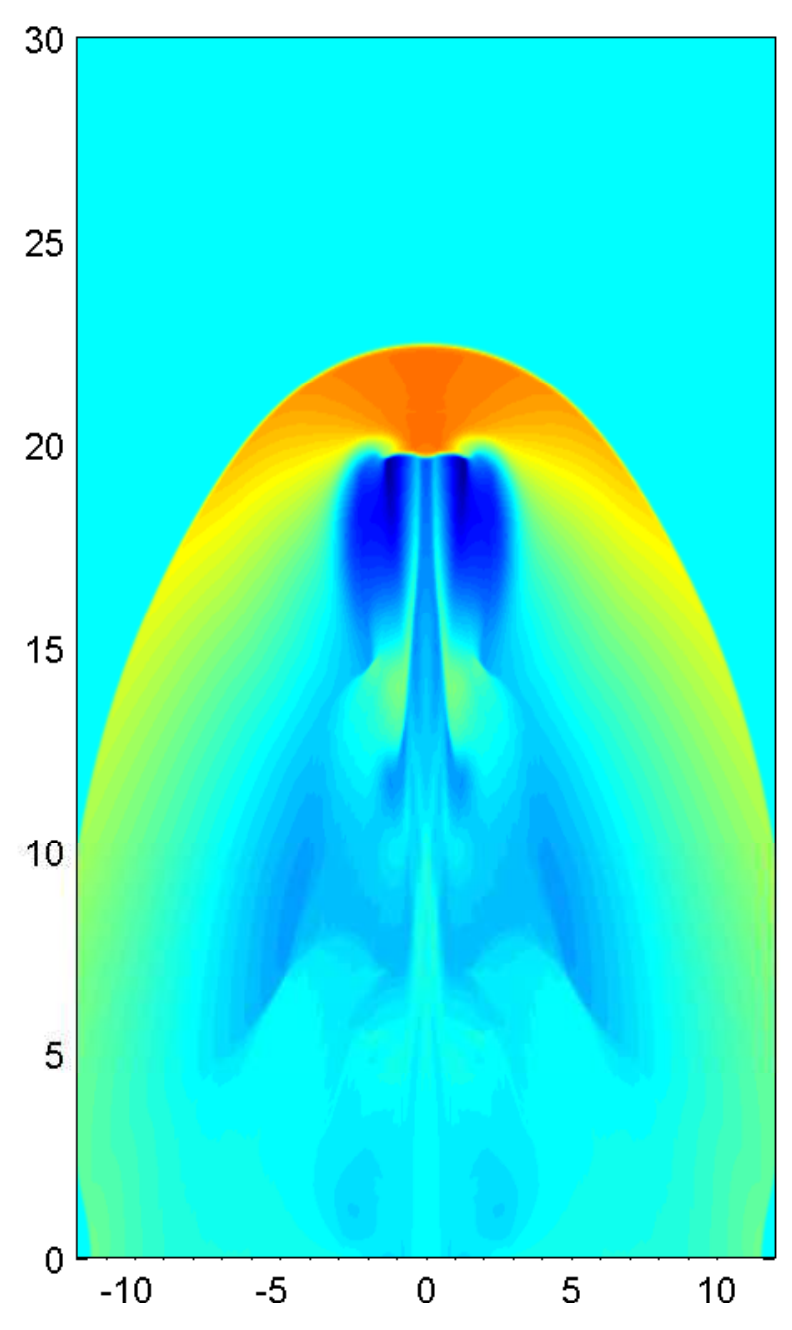}}
  {\includegraphics[width=0.3\textwidth]{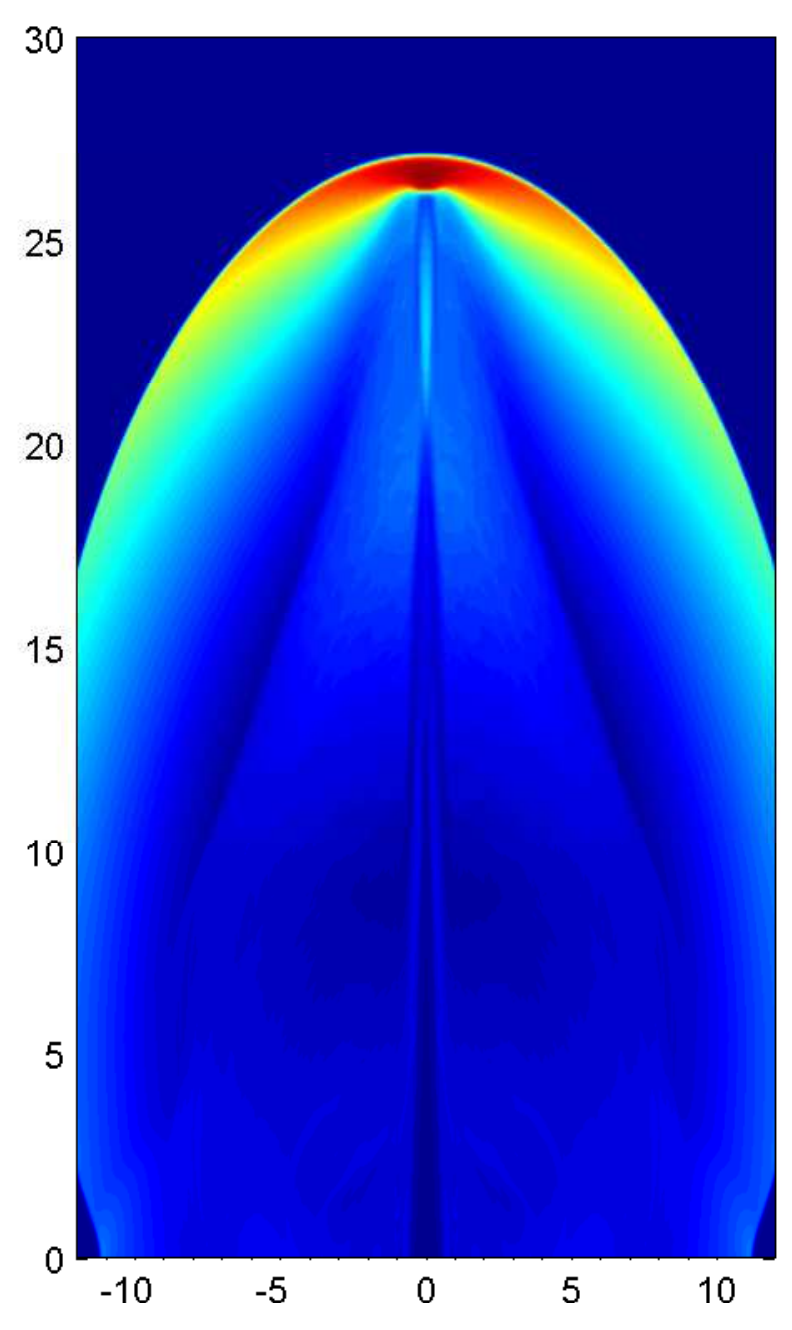}}
  {\includegraphics[width=0.3\textwidth]{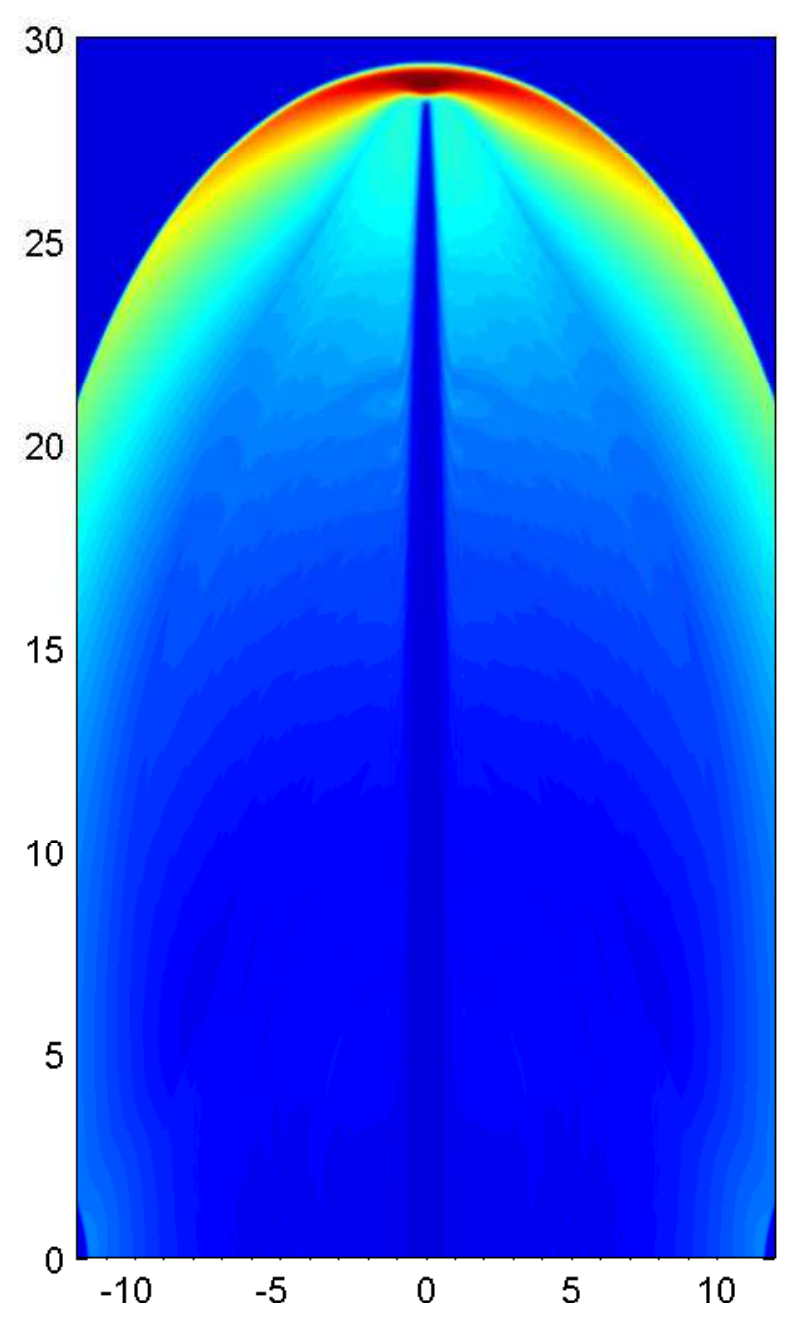}}
  \caption{\small    Same as Fig. \ref{fig:jetA1} except for the schlieren images of
  	 pressure logarithm $\ln p$.
 }
  \label{fig:jetA1p}
\end{figure}

The first test is a pressure-matched hot jet model,
 in which the beam is moving at a speed $v_b$,
 the classical beam Mach number $M_b$ is near the minimum Mach number
 for given $v_b$, and the relativistic effects from large beam internal energies
are important and comparable to the effects from the fluid velocity near the speed of light.
Initially, the computational domain $[0,12]\times [0,30]$
is filled with a static uniform medium with
an unit rest-mass density.
A light relativistic jet is
injected in the $y$--direction through
the inlet part ($|x| \le 0.5$)
on the bottom boundary ($y=0$) with a density of 0.01,
a pressure equal to the ambient pressure, and a speed of $v_b$.
The reflecting boundary condition is specified at $x=0$,
 the fixed inflow beam condition is specified on the nozzle $\{y=0, |x|\le 0.5$\},
while the outflow boundary conditions are on other boundaries.
The EOS is taken as \eqref{eq:EOS:Ryu} and three different configurations are considered as follows:
\\
(i) $v_b=0.99$ and $M_b=1.72$,   corresponding to the case of  Lorentz factor  $W\approx 7.09$ and
relativistic Mach number $M_r :=M_b W /W_s\approx 9.97$,
where   $W_s=1/\sqrt{1-c_s^2}$  is the Lorentz factor associated with the local sound speed;
\\
(ii) $v_b=0.999$ and $M_b=1.74$,  corresponding to the case of  $W\approx 22.37$
and  $M_r\approx  38.88$;
\\
(iii) $v_b=0.9999$ and $M_b=1.74$,  corresponding to the case of   $W\approx   70.71$
and  $M_r\approx  123.03$.
As $v_b$ becomes more close to the speed of light, the simulation of the jet becomes more challenging.

Figs. \ref{fig:jetA1} and \ref{fig:jetA1p}  display respectively
the schlieren images of  rest-mass density logarithm $\ln \rho$ and  pressure logarithm $\ln p$
within the domain $[-12,12]\times [0,30]$ at $t=30$
obtained by using {\tt PCPMSCDGP2}       on the uniform mesh of $240 \times 600$ cells in
the computational domain $[0,12]\times [0,30]$.
It is seen that the  Mach shock wave at the jet head and  the beam/cocoon interface
are well captured during the whole simulation and
the proposed PCP methods exhibit good performance and robustness.

\begin{figure}[htbp]
  \centering
  {\includegraphics[width=0.3\textwidth]{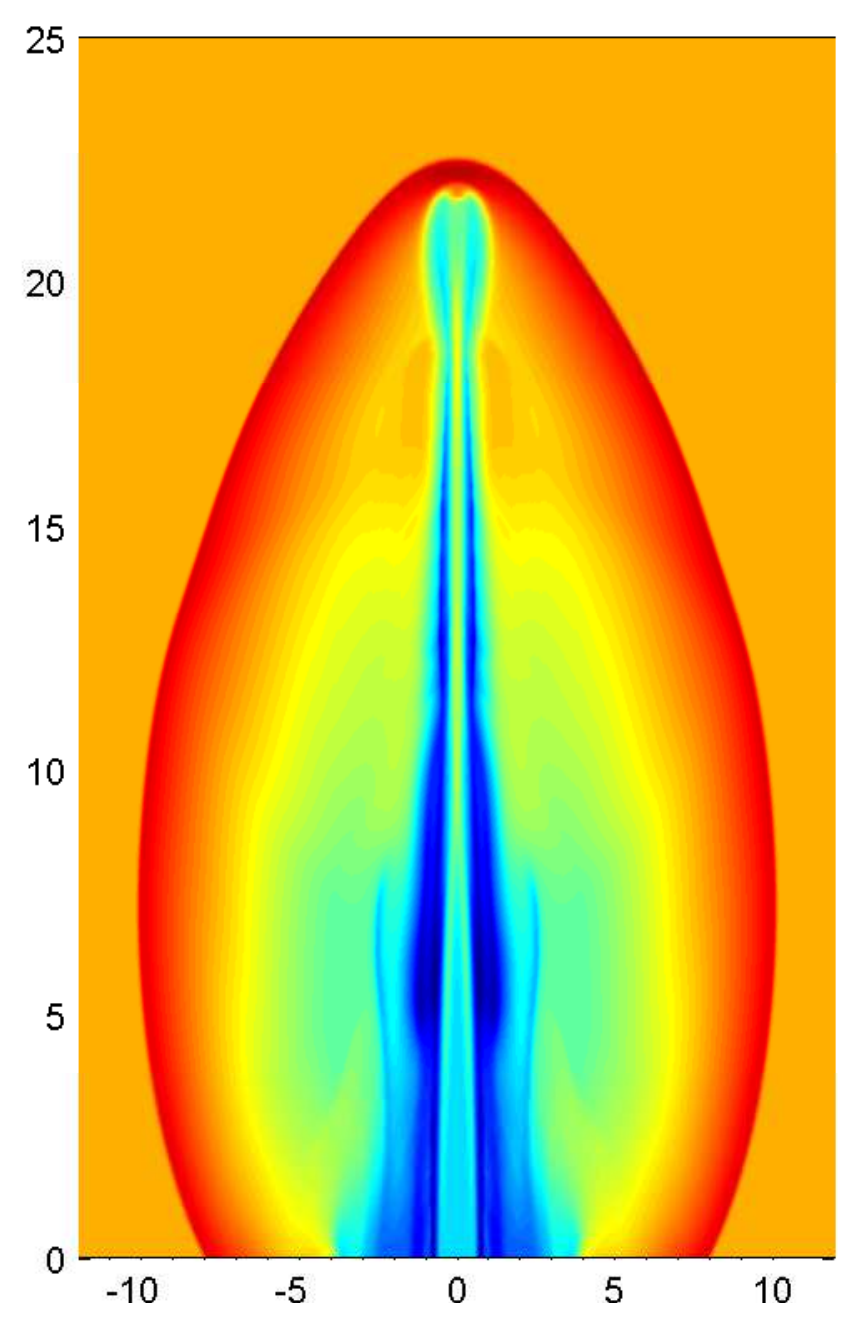}}
  {\includegraphics[width=0.3\textwidth]{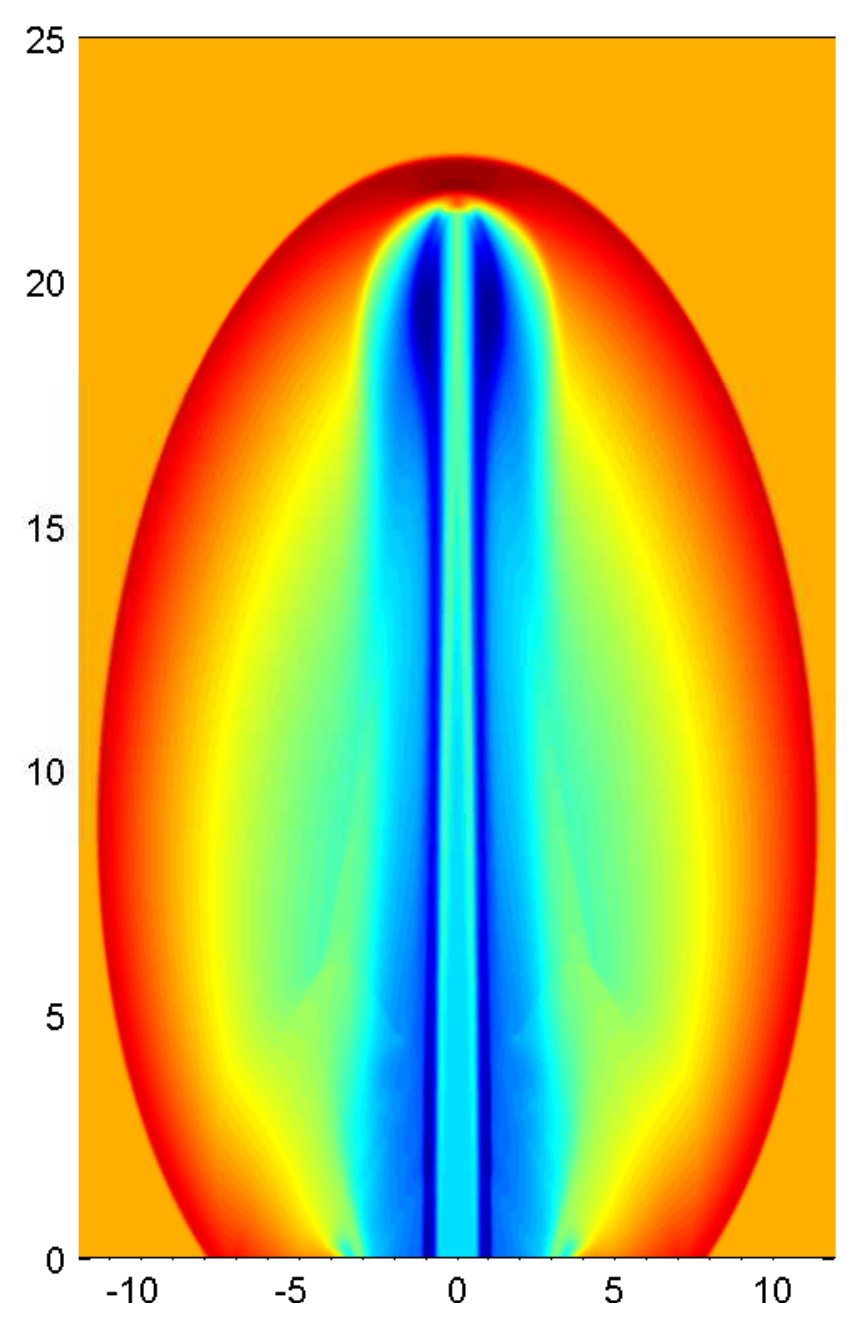}}
  {\includegraphics[width=0.3\textwidth]{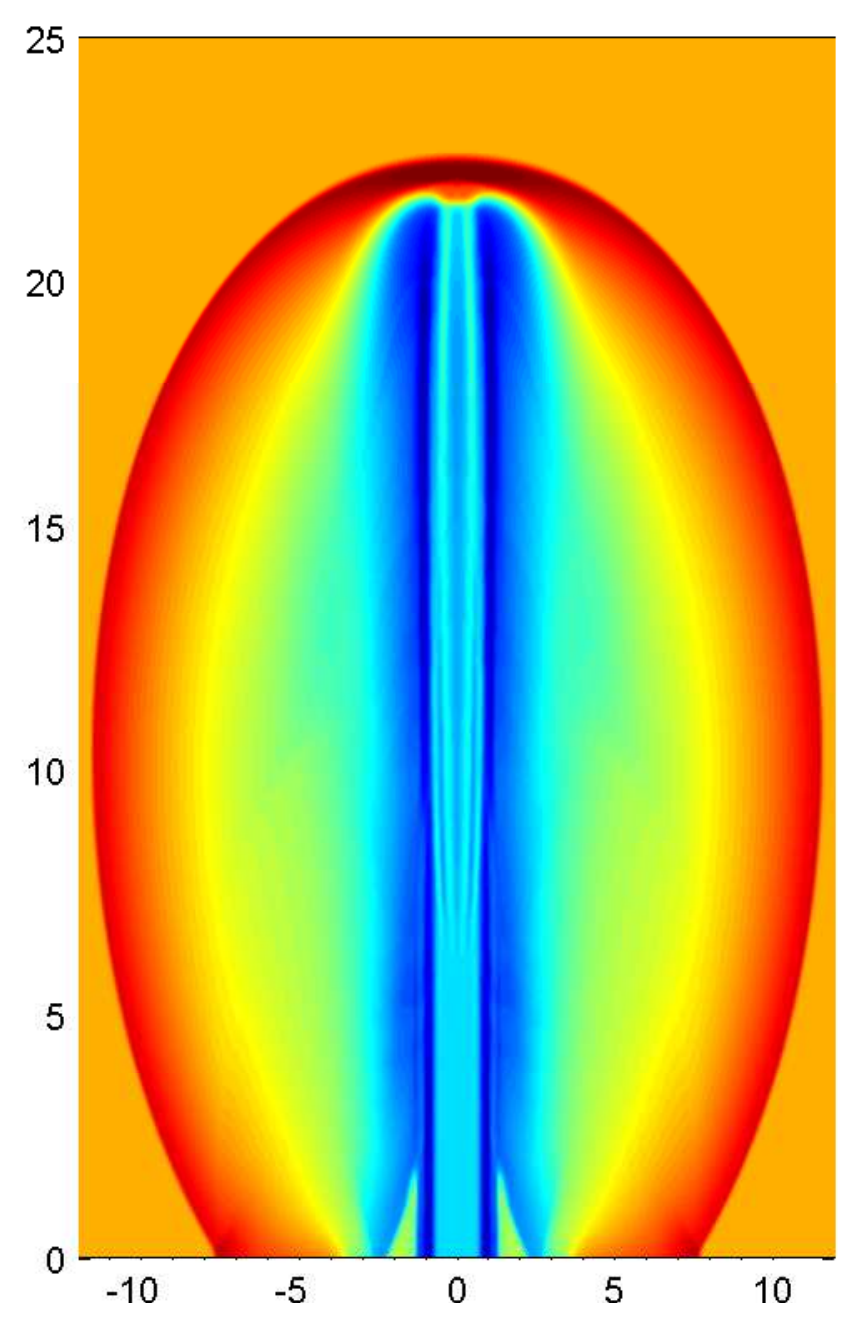}}
  \caption{\small Cold jet models in Example \ref{exampleJet}:
  Schlieren images of the rest-mass density logarithm $\ln \rho$
  obtained by {\tt PCPMSCDGP2}   on the uniform mesh of $240 \times 500$  cells.
        From left to right: configurations (i) at $t=30$,  (ii)
      at $t=25$,  and (iii) at $t=23$.
 }
  \label{fig:jetC2}
\end{figure}

\begin{figure}[htbp]
  \centering
  {\includegraphics[width=0.3\textwidth]{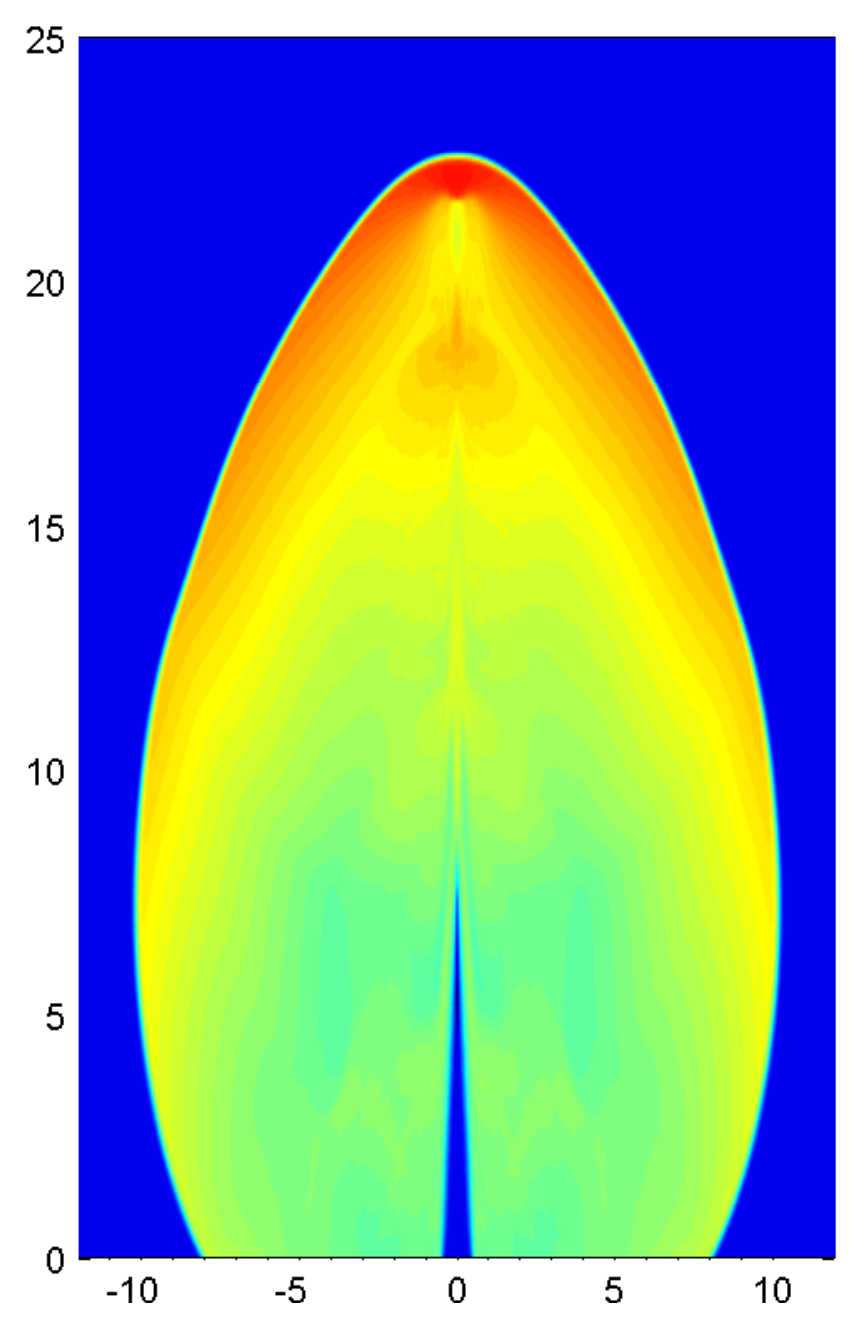}}
  {\includegraphics[width=0.3\textwidth]{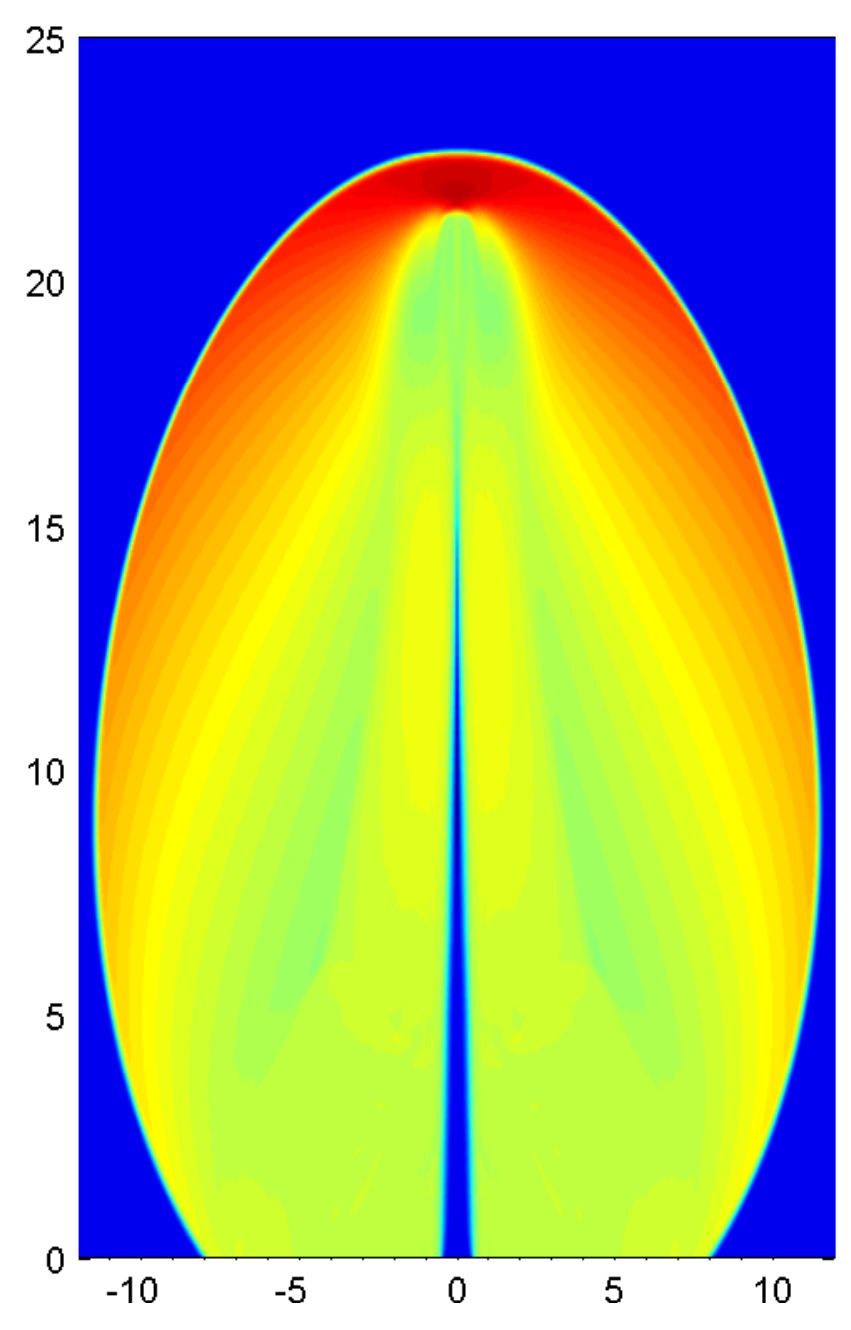}}
  {\includegraphics[width=0.3\textwidth]{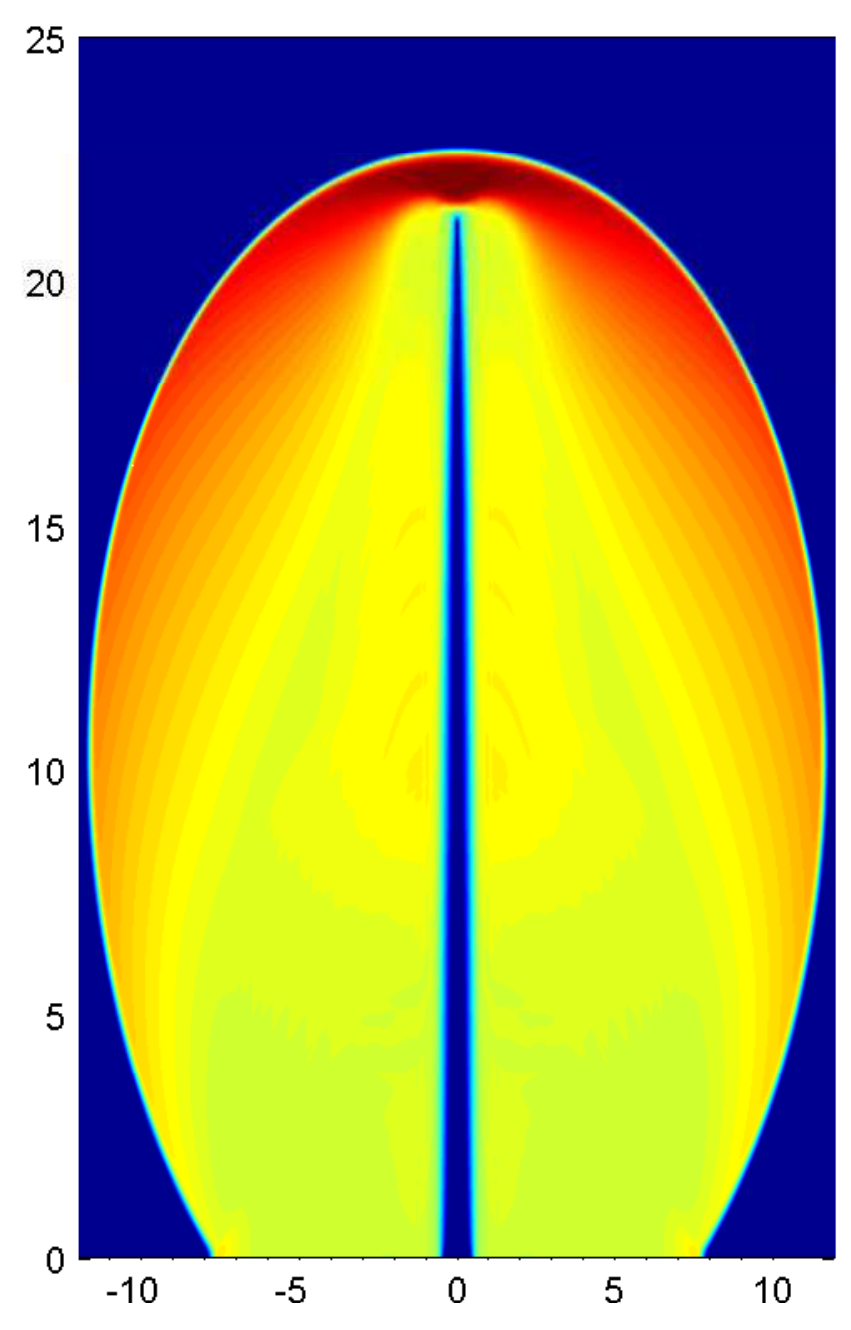}}
  \caption{\small
  Same as Fig. \ref{fig:jetC2} except for the schlieren images of  pressure logarithm $\ln p$.
 }
  \label{fig:jetC2p}
\end{figure}

The second test is the pressure-matched highly supersonic jet model.
Highly supersonic jet model  is also referred to the cold model, in which
 the relativistic effects from the large beam speed dominate
so that there exists an important difference between the hot and cold relativistic jets.
The setups are the same as the above hot jet model except for that the density of
 inlet jet becomes 0.1,  the EOS is taken as the ideal EOS with $\Gamma = \frac 5 3$,
 and the computational domain is $[0,12]\times [0,25]$.
 Three different configurations are  considered as follows:
 \\
 (i) $v_b=0.99$ and $M_b=50$,   corresponding to the case of  Lorentz factor  $W\approx 7.09$ and
 relativistic Mach number $M_r \approx 354.37$;
 \\
 (ii) $v_b=0.999$ and $M_b=50$,  corresponding to the case of  $W\approx 22.37$
 and  $M_r\approx  1118.09$;
 \\
 (iii) $v_b=0.9999$ and $M_b=500$,  corresponding to the case of   $W\approx   70.71$
 and  $M_r\approx  35356.15$.

Figs. \ref{fig:jetC2} and \ref{fig:jetC2p} display respectively  the schlieren images of
 rest-mass density logarithm $\ln \rho$ and  pressure logarithm $\ln p$
 within the domain $[-12,12]\times [0,25]$ obtained by using {\tt PCPMSCDGP2}
 on the uniform mesh of $240 \times 500$ cells in the computational domain $[0,12]\times [0,25]$.
 It is observed that the flow structures are different from
 those of the hot jet model, and the bow shock expends wider for larger beam velocity
 and  our PCP central DG methods exhibit very strong robustness during the whole simulations.
\end{example}

\section{Conclusions}\label{sec:conclude}

The paper developed high-order accurate physical-constraints-preserving (PCP)
central discontinuous Galerkin (DG) methods for the 1D and 2D special relativistic
hydrodynamic (RHD) equations with a general equation of state (EOS).
The main contribution was  proving several key properties of the admissible state set,
including the convexity, scaling and orthogonal invariance, and Lax-Friedrichs splitting property.
It was done with the aid of the equivalent form of the admissible state set
and nontrivial due to the inherent nonlinearity of the RHD equations and
no explicit expressions  of the primitive variables and the
flux vectors with respect to the conservative vector.
Built on the analysis of the admissible state set,
the PCP limiting procedure was designed to enforce the admissibility
of the central DG solutions.
The fully-discrete high-order PCP central DG methods
with the PCP limiting procedure and strong stability preserving time discretization
were proved to
preserve positivity of the density, pressure and specific internal
energy and the bound of the fluid velocity under a CFL type condition,
maintain high-order accuracy, and be $L^1$-stable.
Several 1D and 2D numerical examples were used to demonstrate
 the accuracy, robustness and effectiveness of the proposed PCP methods in solving
several 1D and 2D relativistic fluid flow problems with large Lorentz factor, strong discontinuities, or low
rest-mass density or pressure, etc.
The present PCP limiting procedure and analyses
could be used to develop  high-order accurate PCP finite volume or finite difference schemes
for the RHD equations with a general EOS.

\section*{Acknowledgements}

This work was partially supported by
the National Natural Science Foundation
of China (Nos.  91330205 \& 11421101).

\begin{appendices}

\section{Derivation of (2.6) by the kinetic theory}
\label{Appendix-A}

 Only  the   case of $d=3$ is discussed here.  
 According to the kinetic theory \citep{Cercignani:book,Rezolla-ZanottiBook}, %
 one has
$$
D = \hat m\int_{{\mathbb{R}}^3 } { \hat f {\rm d}\vec {\hat p}},
\  m_i  = \int_{{\mathbb{R}}^3 } { \hat p^i \hat f{\rm d}\vec {\hat p}},\  E = \int_{{\mathbb{R}}^3 } {\hat p^0 \hat f {\rm d}\vec {\hat p}}, \ \ ~i=1,2,3,
$$
where $\hat m$ is the rest mass of the gas particle
and $\hat f(t,\vec x, \vec {\hat p})\in L^2([0,+\infty)\times \mathbb{R}^6)$
is nonnegative and denotes the equilibrium distribution function depending on the space-time and the particle
momentum coordinates $(\hat p^0,\vec{\hat p} )$ with $\hat p^0 = \sqrt{ |\vec {\hat p}|^2 + \hat m^2 }$. It follows that
\begin{align}\nonumber
D^2 + |\vec m|^2 - E^2 & =
\left( \hat m\int_{{\mathbb{R}}^3 } { \hat f {\rm d}\vec {\hat p}} \right )^2 +
\sum\limits_{i = 1}^3  \left ( \int_{{\mathbb{R}}^3 } { \hat p^i \hat f{\rm d}\vec {\hat p}}  \right)^2
- \left(  \int_{{\mathbb{R}}^3 } {\hat p^0 \hat f {\rm d}\vec {\hat p}} \right)^2\\ \nonumber
&\le
\left( \hat m\int_{{\mathbb{R}}^3 } { \hat f {\rm d}\vec {\hat p}} \right )^2 +
\sum\limits_{i = 1}^3  \left ( \int_{{\mathbb{R}}^3 } { |\hat p^i| \hat f{\rm d}\vec {\hat p}}  \right)^2
- \left(  \int_{{\mathbb{R}}^3 } {\hat p^0 \hat f {\rm d}\vec {\hat p}} \right)^2 \\ \nonumber
&
=
\left( \int_{{\mathbb{R}}^3 } { \left( \hat m^2 \hat f^2 \right)^{\frac{1}{2}} {\rm d}\vec {\hat p}} \right )^2 +
\sum\limits_{i = 1}^3  \left ( \int_{{\mathbb{R}}^3 } { \left( |\hat p^i|^2 \hat f^2 \right)^{\frac{1}{2}}  {\rm d}\vec {\hat p}}  \right)^2
- \left(  \int_{{\mathbb{R}}^3 } {\hat p^0 \hat f {\rm d}\vec {\hat p}} \right)^2\\  \label{eq:gEOSproof:Ieq}
& \le
\left( \int_{{\mathbb{R}}^3 } { \left(  \hat m^2 \hat f^2 + \sum\limits_{i = 1}^3 |\hat p^i|^2 \hat f^2  \right)^{\frac{1}{2}} {\rm d}\vec {\hat p}} \right )^2
- \left(  \int_{{\mathbb{R}}^3 } {\hat p^0 \hat f {\rm d}\vec {\hat p}} \right)^2\\ \nonumber
& =
\left( \int_{{\mathbb{R}}^3 } {  \sqrt{ |\vec {\hat p}|^2 + \hat m^2 }   \hat f {\rm d}\vec {\hat p}} \right )^2
- \left(  \int_{{\mathbb{R}}^3 } {\hat p^0 \hat f {\rm d}\vec {\hat p}} \right)^2 = 0,
\end{align}
where the reverse Minkowski inequality 
$$
\sum\limits_{i=0}^3  \left( \int_{{\mathbb{R}}^3 } { \left(  g_i(\vec {\hat p}) \right)^{\frac{1}{2}} {\rm d}\vec {\hat p}} \right )^2
\le \left( \int_{{\mathbb{R}}^3 } { \left( \sum\limits_{i=0}^3  g_i(\vec {\hat p}) \right)^{\frac{1}{2}} {\rm d}\vec {\hat p}} \right )^2,
$$
 has been used, 
$g_0= \hat f^2 \hat m^2$, and $g_i=\hat f^2  |\hat p^i|^2$, $i=1,2,3$.


The equal sign in \eqref{eq:gEOSproof:Ieq} does not work, in other words,
it always holds that $D^2 + |\vec m|^2 < E^2$.
Otherwise, one has that (i) $\hat f$ is equal to zero in the $\hat{\vec p}$ space
almost everywhere for $\vec{\hat p}$,
or (ii) there exist three nonnegative real numbers $\{a_i\}_{i=1}^3$ independent on $\vec{\hat p}$ such that
\begin{align}\label{EQ:appedix-A2}
g_0-a_i g_i = 0, \  i=1,2,3.
\end{align}
  for almost $\hat{\vec p} \in {\mathbb{R}}^3$.
The case (i) conflicts with  the fact that
$$
 \int_{{\mathbb{R}}^3 } \hat f(t,\vec x, \vec {\hat p}) {\rm d}\vec {\hat p } = D/\hat m>0,
$$
while the case (ii) also implies that $\hat f$ is equal to zero almost everywhere
for $\vec{\hat p}$ such that  the same contradiction is met. In fact, if
$\hat f(t,\vec x, \hat{\vec p}) \neq 0$ for fixed $t$ and $\vec x$,
then using \eqref{EQ:appedix-A2} gives $|\hat p^i| = \frac{\hat m}{a_i}$, where
 $a_i \neq 0$ since $g_0 >0$ and \eqref{EQ:appedix-A2}.
It implies that for fixed $t$ and $\vec x$, $\hat f(t,\vec x, \hat{\vec p}) \neq 0$ only when
 $\vec {\hat p}=(\pm {\hat m}/{a_1},\pm {\hat m}/{a_2},\pm {\hat m}/{a_3})$,
thus $\hat f(t,\vec x, \hat{\vec p})$ is equal to zero  in the $\hat{\vec p}$ space almost everywhere.

For any $\rho,p \in \mathbb{R}^+$ and $\vec v\in {\mathbb {R}}^3$ satisfying $v=|\vec v| < 1$, it holds that
$$0< E^2-(D^2+|\vec m|^2)=\frac{1}{1-v^2} \left[ \rho^2(1+e)^2-\rho^2-p^2v^2 \right].$$
The arbitrary of $\vec v \in {\mathbb {R}}^3$ with $ v < 1$ yields
$$\inf_{v < 1} \big( \rho^2(1+e)^2-\rho^2-p^2v^2 \big) \ge 0.$$
Thus one has $ \rho^2(1+e(p,\rho))^2-\rho^2-p^2\ge0 $, which is equivalent to \eqref{eq:hcondition1} by noting $e>0$ and \eqref{eq:h}.
\end{appendices}

\allauthors

\listofchanges

\end{document}